\documentclass[12pt,reqno,oneside]{amsart}
\makeindex
\usepackage{amsmath,amsthm,amssymb}
\usepackage{graphicx,xcolor}

\usepackage{etoolbox}
\numberwithin{figure}{section}
\numberwithin{equation}{section}
 
\DeclareFontFamily{U}{mathb}{\hyphenchar\font45}
\DeclareFontShape{U}{mathb}{m}{n}{
	<-6> mathb5 <6-7> mathb6 <7-8> mathb7
	<8-9> mathb8 <9-10> mathb9
	<10-12> mathb10 <12-> mathb12
}{}
\DeclareSymbolFont{mathb}{U}{mathb}{m}{n}
\DeclareMathSymbol{\llcurly}{\mathrel}{mathb}{"CE}
\DeclareMathSymbol{\ggcurly}{\mathrel}{mathb}{"CF}

\definecolor{Red}{cmyk}{0,1,1,0}

\definecolor{Blue}{cmyk}{1,1,0,0}

\theoremstyle{plain}

\newtheorem{theorem}{Theorem}[section]
\newtheorem{proposition}[theorem]{Proposition}
\newtheorem{lemma}[theorem]{Lemma}
\newtheorem{corollary}[theorem]{Corollary}
\theoremstyle{definition} \theoremstyle{remark}
\newtheorem{remark}[theorem]{Remark}

\newtheorem{definition}[theorem]{Definition}

\newcommand{\Leb}{\operatorname{Leb}}

\renewcommand{\ge}{\geqslant}

\renewcommand{\le}{\leqslant}

\usepackage{mathrsfs}
\usepackage[colorlinks=true,linkcolor=blue, urlcolor=blue, citecolor=blue,backref]{hyperref}	

\makeatletter
\@namedef{subjclassname@2020}{%
  \textup{2020} Mathematics Subject Classification}
\makeatother

\newcommand{\norm}[1]{\left\Vert#1\right\Vert}

\newcommand{\lquot}[2]{#1\text{\textbackslash}\hspace{.1em}#2}

\newcommand{\R}{\mathbb R}
\newcommand{\HP}{\mathbb H}

\newcommand{\N}{\mathbb N}
\newcommand{\Z}{\mathbb Z}

\newcommand{\CCC}{{\mathcal C}}

\newcommand{\PPP}{{\mathcal P}}

\newcommand{\Ph}{{\widehat{P}}}
\newcommand{\Pt}{{\widetilde{P}}}
\newcommand{\Rh}{{\widehat{R}}}

\newcommand{\rh}{{\widehat{r}}}
\newcommand{\rt}{{\widetilde{r}}}
\newcommand{\Fh}{{\widehat{F}}}
\newcommand{\Ft}{{\widetilde{F}}}
\newcommand{\Gh}{{\widehat{G}}}
\newcommand{\Gt}{{\widetilde{G}}}
\newcommand{\Dt}{{\widetilde{\Delta}}}

\newcommand{\Isq}[1]{I_{s_0 \dots s_{#1}}^{q_0  \dots  q_{#1}}}

\usepackage[font={scriptsize},labelfont={bf}, labelsep={period}]{caption}
\usepackage{mathdots}
\usepackage[figuresright]{rotating}
\usepackage{pdflscape}

\usepackage{relsize}

\usepackage{fullpage}
\usepackage{graphicx}
\usepackage{fancyvrb}
\usepackage[latin1]{inputenc}
\usepackage[T1]{fontenc}
\usepackage[main=english]{babel}
\usepackage{wrapfig}
\usepackage{amsfonts}
\usepackage{amssymb}
\usepackage{amsmath}
\usepackage{eucal}
\usepackage{xcolor}
\usepackage{color}
\usepackage{float}
\usepackage{amsthm}
\usepackage{enumitem}
\usepackage{graphics} 
\usepackage{lmodern}
\usepackage[parfill]{parskip}
\usepackage{caption}
\usepackage{setspace}
\usepackage{listings}
\usepackage{verbatim}
\usepackage{hyperref}
\usepackage{dsfont}

\usepackage{multicol}

\usepackage{booktabs}

\allowdisplaybreaks

\newcommand\textline[4][t]{%
  \par\smallskip\noindent\parbox[#1]{.333\textwidth}{\raggedright\texttt{}#2}%
  \parbox[#1]{.333\textwidth}{\centering#3}%
  \parbox[#1]{.333\textwidth}{\raggedleft\texttt{#4}}\par\smallskip%
}

\begin{document}

\author{Nicola Bertozzi}%
\address{Dipartimento di Matematica, Universit\`a di Pisa, Largo Bruno
  Pontecorvo 5, 56127 Pisa, Italy} \email{nicola.bertozzi@phd.unipi.it}
  
\author{Claudio Bonanno}%
\address{Dipartimento di Matematica, Universit\`a di Pisa, Largo Bruno
  Pontecorvo 5, 56127 Pisa, Italy} \email{claudio.bonanno@unipi.it}
  
\author{Paulo Varandas}%
\address{CIDMA and Departamento de Matem\'atica, Universidade de Aveiro, Campus Universit\'ario de Santiago 3810-193, Aveiro, Portugal} \email{paulo.varandas@ua.pt}


\title{Exponential Mixing for Hyperbolic Flows on Non-Compact Spaces}

\begin{abstract}
We introduce a family of hyperbolic flows on non-compact phase spaces that includes the geodesic flow on the modular surface. For these systems we prove exponential decay of correlations for sufficiently regular observables with respect to its SRB measure. Our approach follows the dynamical method of Dolgopyat and subsequent developments for suspension flows with uniformly hyperbolic Poincar\'e maps satisfying a uniform non-integrability condition.
To fit this framework, we construct a suspension model via a triple inducing scheme that yields a uniformly hyperbolic Poincar\'e map with a countable Markov partition. We show that the resulting roof function is cohomologous to one that is constant along stable leaves and satisfies the required non-integrability and tail conditions. As an application, we recover a dynamical proof on Ratner's exponential mixing for the geodesic flow on the modular surface.
\end{abstract}

\maketitle
\section{Introduction} \label{sec:intro}

One of the major cores of the modern theory of mixing flows is the study of hyperbolic, geodesic, and horocycle flows on negatively curved surfaces. In particular, geodesic flows on compact surfaces with negative curvature exhibit a rich dynamics which translates into exponential rate of decay of correlations \cite{dolgopyat}. In some remarkable cases, compactness turns out to be non-necessary: in 1987, Ratner proved that the geodesic flow on the modular surface mixes exponentially fast (cf. \cite{ratner}). She exploited the nice algebraic structure of the phase space through an argument based on harmonic analysis and representation theory. In the perspective of confirming the conjecture that, in general, negative curvature is sufficient for exponential decay of correlations, in 1998 Chernov explored the possibility of showing the same result through a more classical and general dynamical argument, involving Markov partitions (cf. \cite{chernov}). Taking into account some additional geometric properties, namely uniform non-integrability of the stable and unstable manifolds, he recovered a stretched-exponential estimate for the correlation functions of contact three-dimensional Anosov flows and geodesic flows on compact surfaces of variable negative curvature. These results were then enhanced by the works of Dolgopyat in the late nineties (cf. \cite{dolgopyat, dolgopyatII}), who set up a general framework for proving exponential decay of correlations for sufficiently regular Anosov flows. His method relies on the uniform non-integrability and the strong spectral properties of the transfer operator, which are used to estimate the Laplace transform of the correlation function.
The exponential decay of correlations for contact Anosov flows was studied by Liverani~\cite{li}.

In this paper we introduce a family of hyperbolic flows, which includes the geodesic flow on the modular surface, and we show that each of these systems exhibits exponential decay of correlations with respect to sufficiently regular observables and a physical measure which, in the modular surface case, coincides with the Liouville measure. In particular, we employ the dynamical method outlined by Dolgopyat and then extended by Baladi and Vall\'ee in \cite{baladivallee}, Avila, Gou\"ezel and Yoccoz in \cite{avilaGouezelyoccoz},
and Ara\'ujo and Melbourne \cite{araujomelbourne}. These papers provide criteria for proving exponential decay of correlations for the Sinai-Ruelle-Bowen (SRB) measures of flows that can be modelled as suspension flows over Poincar\'e maps with a good hyperbolic structure and whose roof function satisfy some uniform non-integrability condition (UNI).  
These results have been applied in the study of decay of correlations for both hyperbolic and non-uniformly hyperbolic flows, including the Lorenz attractor (see \cite{ABV,ArVar,BW} and references therein). There are also results on the exponential decay of correlations for flows associated with other equilibrium states (see \cite{daltrovarandas,Pollicott,Stoyanov,TZ} and references therein). Related interesting results concerning geodesic flows include \cite{bonthonneauweich}.

While all the previous contexts above deal with flows on a compact phase space, with the notable exception of \cite{bonthonneauweich}, here we are able to show that the latter approach can also be used to tackle geodesic flows in non-compact phase spaces, under suitable assumptions. More precisely, Bonanno, Del Vigna, and Isola \cite{bonanno_delvigna_isola} produced an isomorphism between the geodesic flow on the modular surface and a suspension flow over an appropriate Poincar\'e section
which consists of a skew-product on an unbounded domain on the plane and, in the first component, the dynamics assumes a neutral behavior similar to the Maneville-Pommeau map close to the origin, has a singularity at $x=1$ and has derivative equal to 1 in the unbounded connected component of its domain (see equation~\eqref{eq:defbase}). Our approach is to model the geodesic flow on the modular surface by a suspension flow that fits the assumptions of the known criteria for exponential decay of correlations. In order to do so, we proceed with a triple inducing scheme to guarantee that the resulting Poincar\'e map is uniformly hyperbolic defined over a countable Markov partition. The drawback is that the triple inducing generates an extremely more complicate roof function which could be non-constant along stable manifolds for the Poincar\'e map. In case of the geodesic flow (and the models that we propose here), the resulting roof function can be proved to be cohomologous to a roof function which is constant along the stable foliation, which allows to reduce the correlation function from the suspension flow to the expanding semiflow case and to apply the results from Avila, Gou\"ezel and Yoccoz \cite{avilaGouezelyoccoz}. 

This paper is organized as follows. 
In Section \ref{sec:family} we describe the family of suspension flows that we will consider and we state the main results. In Section \ref{sec:standard} we will recall the criterion for exponential decay of correlations for the SRB measure, due to Avila, Gou\"ezel, and Yoccoz \cite{avilaGouezelyoccoz}.
Section \ref{sec:base} is devoted to the study of the Poincar\'e maps, by inducing it on a subset of the original phase space (which is bounded on the predominantly unstable direction but remains unbounded in the predominantly contractive component) and by considering an accelerated version of the system
(taking the second iterate of such map) that leads to uniform expansion. We emphasize that the domains are still unbounded, hence have non-compact closure.  Section \ref{sec:roof function} deals with the roof function and its induced counterpart. We prove it is cohomologous to a much more convenient roof function, allowing to verify that 
it satisfies uniform non-integrability condition and has exponential tails.
 In Section \ref{sec:expdecay} we establish exponential decay of correlations for the accelerated version of the flow, extending some arguments presented in \cite{avilaGouezelyoccoz} to our non-compact domains, and prove the main result of the paper (Theorem~\ref{thm:main}). In Section \ref{sec:geodesic} we verify that the geodesic flow on the modular surface can be modeled by a suspension flow in the family we are considering, hence recover an alternative dynamical proof  for its exponential mixing rate with respect to Liouville measure. Finally, in Appendices A and B we provide the proofs of a couple of technical results that we need in the previous sections.

\section{Statement of the main results} \label{sec:family}

 Our main reslts concern a family of hyperbolic flows on non-compact phase spaces. We first describe the one-dimensional maps, Poincar\'e maps and suspension flows (cf. Subsections~\ref{sec:setting}-~\ref{sec:susprf}), postponing the statement of the main results to Subsection~\ref{subsec:statements}. 

\subsection{One-dimensional maps} \label{sec:setting} 
Inspired by \cite{bonanno_delvigna_isola}, let us consider a map $f_0: (0,1) \rightarrow \R^+$ that is $C^2$-smooth and satisfies the following set of assumptions:
\begin{multicols}{2}
\begin{itemize}
\item[\textbf{(A1)}] $\lim\limits_{x \to 0^+} f_0 (x) = 0$, \\ and $\lim\limits_{x \to 1^-} f_0(x) = +\infty$.
\item[\textbf{(A2)}] $f_0'(x) > 1$, for any $ x \in (0,1)$.
\item[\textbf{(A3)}] $\lim\limits_{x \to 0^+} f_0'(x)=1$.
\end{itemize}
\columnbreak
\begin{itemize}
\item[\textbf{(A4)}] $f_0''(x) > 0$, for any $x \in (0,1)$.
\item[\textbf{(A5)}] 
$\dfrac{f_0''(x)}{f_0'(x)^2}$ is decreasing in $x$.
\item[\textbf{(A6)}] $C_A = \sup\limits_{x \in (0,1)} \dfrac{f_0''(x)}{[f_0'(x)]^2} < +\infty$.
\end{itemize}
\end{multicols}

Let us comment on the assumptions. The map $f_0$ is strictly increasing, expanding, convex and unbounded (as a consequence of (A1), (A2) and (A4)). However, it combines the neutral behavior of a map with an indifferent fixed point (as a consequence of (A3))  
and a singularity (it has a singularity at $x=1$ as assumption (A1) implies that $\lim\limits_{x \to 1^-} f_0'(x) = +\infty$). Assumption \textbf{(A5)}, satisfied for geodesic flow, is of technical nature and it is natural condition that is useful to verify the UNI condition (cf. proof of Proposition~\ref{thm:rstandard2}).
Finally assumption \textbf{(A6)}, often referred to as \textit{Adler's property}, implies the \textit{bounded distortion property}, i.e. given $x,y \in (0,1)$ with $x\neq y$, we have:
\begin{align}
\left| \ln \dfrac{f_0'(x)}{f_0'(y)}\right| \le C_A |f_0(x)-f_0(y)|. \label{boundeddistortionf0}
\end{align}


\begin{figure}[htb]
    \centering
\includegraphics[width=0.44\linewidth]{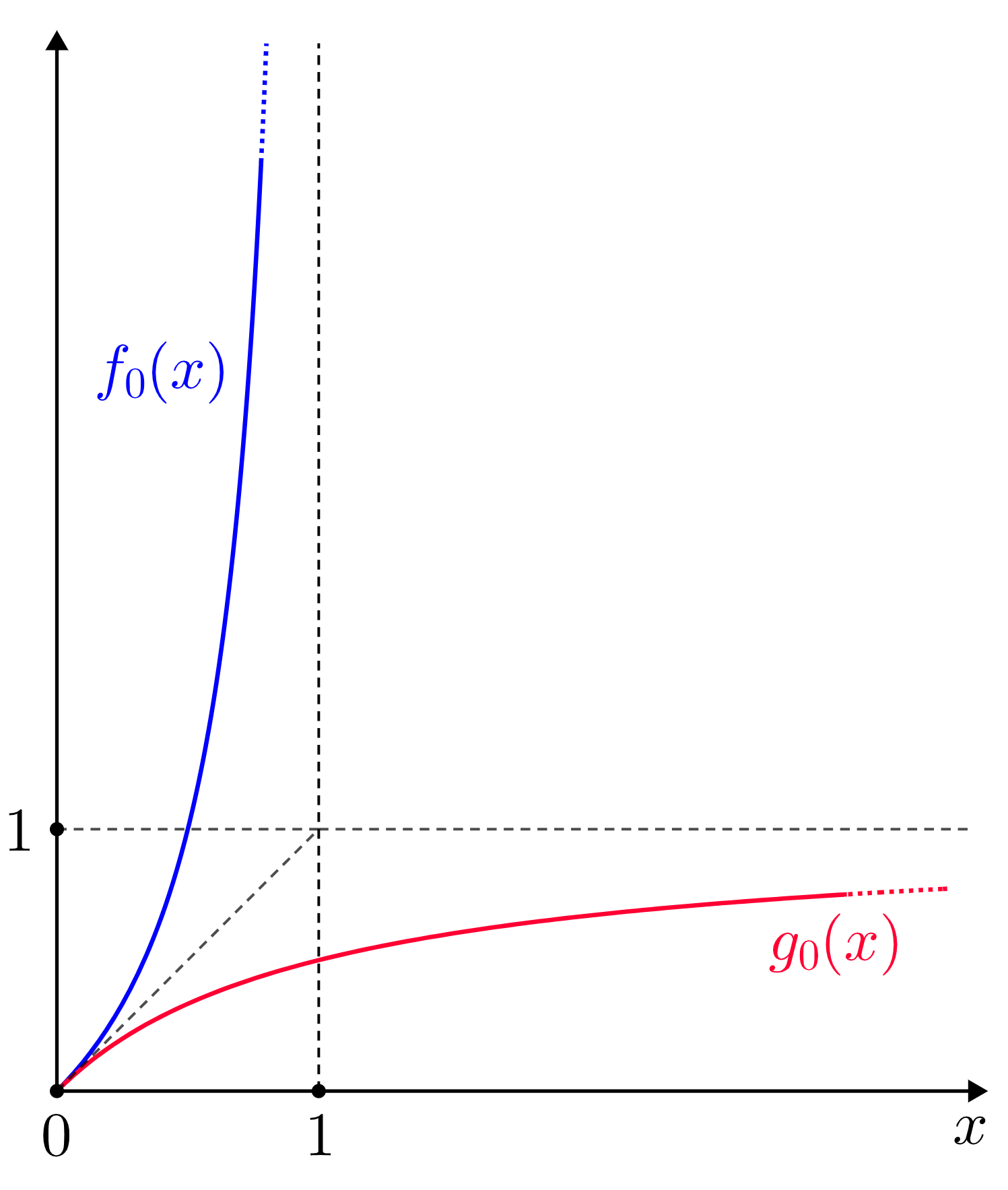}
    \caption{An illustration of the maps $f_0$ and its inverse $g_0$.}
    \label{fig:map_f0_g0}
\end{figure}

Since the map $f_0$ is a bijection, we denote by $g_0: \R_+ \rightarrow (0,1)$ its inverse. 
Note that $g_0$ is increasing and contracting, since $g_0'(x) \in (0,1)$ for every $x \in \R$. However, the contraction does not have a uniform rate, because $\lim\limits_{x \to 0^+} g_0'(x) = 1$. We assume the following properties on $g_0$:

\textbf{(B)} There exist monotone decreasing sequences $\left\{ \omega_n^{(i)} \right\}_{n \ge 1}$, for $i \in \lbrace 1,2 \rbrace$ on the interval $(0,1)$ converging to $0$, 
and constants $C_I^{(1)},C_I^{(2)}>0$ and
$\sigma_1,\sigma_2 \in (0,1)$ such that the following properties hold:
\begin{itemize}[leftmargin=1.5cm]
\item[(i)] $\sum\limits_{n =1}^{+\infty} \left[ \omega_n^{(i)} \right]^{1-\sigma_i} < +\infty$;
\item[(ii)] $g_0'(n-1) \le C_I^{(1)} \cdot \omega_n^{(1)}$ for every $n\ge 2$;
\item[(iii)] $\prod\limits_{j = 1}^{n-1} g_0'(g_0^j(1)) = \dfrac{(g_0^n)'(1)}{g_0'(1)} \le C_I^{(2)} \cdot \omega_n^{(2)}$ for every $n\ge 1$;
\end{itemize}
These assumptions, which appear naturally in the context of the geodesic flow on the modular surface (cf. Section~\ref{sec:geodesic}), could be formulated in terms of the derivatives of the map $f_0$ along a sequence of points that tend to $0$, where the derivative $f_0'$ presents loses hyperbolicity.
We will always assume functions $f_0$ and $g_0$ satisfy the previous set of assumptions and use the notation
$\prod\limits_{l=n}^m x_l = 1$ for any sequence $\lbrace x_l \rbrace_{l \ge 1}$ and $n>m$.

\subsection{Poincar\'e maps}
Let us introduce the dynamical system that will be used to construct the suspension flows, hence will be the Poincar\'e first return map to the base. Consider the \textit{base map} $\PPP$ defined on the non-compact and unbounded metric space $X=\R^+ \times \R^+$ by
\begin{align}
\label{eq:defbase}
\PPP (x,y) := \begin{cases}
\left( f_0(x),g_0(y)\right), & \text{if}\ x<1; \\[5pt]
(x-1,y+1), & \text{if} \ x>1; \\[5pt]
(0, +\infty ), & \text{if}\ x=1
\end{cases}
\end{align}
for any $(x,y) \in \R^+ \times \R^+$. This is not a product map. In fact, writting $f_1(x) = x-1$ and $g_1(x) = x+1$ for every $x \in \R$, and defining  $f:\R^+ \rightarrow \R^+$ and $g_x:\R^+ \rightarrow \R^+$ as
\begin{align*}
f(x) = \begin{cases}
f_0(x), & \text{if}\ x<1; \\[5pt]
f_1(x), & \text{if} \ x>1; \\[5pt]
0, & \text{if}\ x=1.
\end{cases} \ \ \ \ \ \ \ \ \ \ g_x(y) = \begin{cases}
g_0(y), & \text{if}\ x<1; \\[5pt]
g_1(y), & \text{if} \ x>1; \\[5pt]
+\infty, & \text{if}\ x=1.
\end{cases}
\end{align*}
for any $(x,y) \in \R^+ \times \R^+$, one can observe that $\PPP$ is the skew-product 
\begin{equation}
    \label{eq:Passkew}
    \PPP (x,y) = (f(x),g_x(y)).
\end{equation}
We further assume that:
\medskip

\textbf{(C)} $\PPP$ preserves an ergodic measure  $m \ll Leb$ (where $Leb$ stands for the Lebesgue measure) and that, for every $a>0$ 
$$m ((a,b) \times \R^+ ) <+\infty.
$$ 

\medskip
\begin{remark}
Observe that the measure $m$ need not be finite. Moreover, since $m \ll Leb$, one can neglect the point $x=1$ in the definitions of the maps above, hence  focusing on the sets $(0,1)$ and $(1,+\infty)$.    
\end{remark}

\subsection{Roof functions and suspension flows}
\label{sec:susprf}
Let us consider strictly positive \textit{roof functions}  $\rho$ which are defined for every $(x,y) \in \left( \R^+ \setminus \lbrace 1 \rbrace \right) \times \R^+$ by:
\begin{align}
\label{eq:defroof functiongen}
\rho (x,y) := \rho_0 \ln \left[ \dfrac{x}{y} \cdot \dfrac{g_x(y)}{f(x)}\cdot \dfrac{f'(x)}{\partial_y g_x(y)}\right] = \begin{cases}
\rho_0\ln \left[ \dfrac{x}{y}\cdot \dfrac{g_0(y)}{f_0(x)} \cdot \dfrac{f_0'(x)}{g_0'(y)} \right], & \text{if}\ x<1; \\
\\
\rho_0 \ln \left[ \dfrac{x}{y}\cdot \dfrac{g_1(y)}{f_1(x)} \cdot \dfrac{f_1'(x)}{g_1'(y)} \right], & \text{if} \ x>1,
\end{cases}
\end{align}
for some $\rho_0 >0$, and satisfying the integrability condition 
\textline[t]{\textbf{(D)}}{
$\int_{\R^+ \times \R^+} \rho \, dm < +\infty
$.}{}

\medskip
Now, we proceed to define the suspension flow over the basis $( (\R^+ \setminus\{1\}) \times \R^+ , \PPP , m)$ with roof function $\rho$. Its phase space is\
\begin{align}
\Sigma_\rho = \left\{ [(x,y),s ] \in \left( \left( \R^+\setminus \lbrace 1 \rbrace \right) \times \R^+ \right) \times \R \, | \,0 \le s \le \rho (x,y) \right\} / \sim, \label{phasespceflow}
\end{align}
where the equivalence relation $\sim$ identifies the points $[(x,y), \rho (x,y)]$ and $[\PPP (x,y) , 0]$.
The semiflow $\varphi^t : \Sigma_\rho \rightarrow \Sigma_\rho$ is  defined in local coordinates as
\begin{align}
\varphi^t [(x,y), s ] =  [(x,y), s +t], \label{suspensionflowdef}
\end{align}
for every $t \in [0,+\infty)$, taking into account the identifications. In other words,
\begin{align}
\label{eq:suspension-t}
\varphi^t [(x,y),s] = \left( \PPP^n (x,y) , s+t-\sum\limits_{i=0}^{n-1}\rho \circ \PPP^i (x,y) \right),
\end{align}
where $n\ge 0$ is the unique integer such that
\begin{align*}
\sum\limits_{i=0}^{n-1}\rho \circ \PPP^i (x,y) \le t+s < \sum\limits_{i=0}^{n}\rho \circ \PPP^i (x,y).
\end{align*}
Since the map $\PPP$ is invertible then $\varphi^t$ 
defines a flow.
Observe that the flow $\varphi^t$ preserves the product measure $m \otimes Leb$ on $\Sigma_\rho$ given by 
$$
\int \psi \; d(m \otimes Leb) =  
\int\limits_{\R^+\times \R^+} \left[ \int_0^{\rho (x,y)} \, \psi((x,y),t) \, dt \right] \, dm (x,y) 
$$
for every continuous observable $\psi: \mathbb R^+ \times \mathbb R^+ \to \mathbb R$ with compact support.
As $\rho$ is $m$-integrable then the  measure $m \otimes Leb$ is finite and,
in particular, $\varphi^t$ preserves the probability measure $m_\rho = \dfrac{m \otimes Leb}{m \otimes Leb (\Sigma_\rho)}$.\\

\subsection{Statements} \label{subsec:statements}
In order to state our main results, let us introduce the class of observables that we will consider.
Given a smooth Riemannian manifold $M$, let us denote by $C_b^1(M)$ the space of $C^1$-smooth and bounded functions $u : M \rightarrow \R$ such that $\sup_{p \in M} \| Du(p) \| < +\infty$, endowed with the norm
\begin{align}
\label{normfunction}
\| u \|_{C_b^1 (M )}  = \sup\limits_{p \in M} |u(p)| + \sup\limits_{p \in M} \| Du(p ) \|. 
\end{align}
In Section \ref{subsec:exponentialDOCoriginalflow} we will define a map $\Pi: \Sigma_r \rightarrow \Sigma_\rho$ which induces a conjugacy between a suspension flow $\Pt_t$, defined on its phase space $\Sigma_r$, and the flow $\varphi^t$ defined on $\Sigma_\rho$ in the previous section. $\Pt_t$ is obtained through an inducing scheme, and, in some sense, is an \textit{accelerated} counterpart of the original flow $\varphi^t$, defined over a base space $\Sigma \subset \left( g_0(1) , 1 \right) \times \R^+$, and with a roof function $r$ (see the discussion below and Section \ref{sec:exponentialDOCinducedflow} for more details).

We denote by $C_{b,*}^1(\Sigma_r )$ the space of observables in 
$C_b^1(\Sigma_r )$ which have uniformly continuous derivatives in the base component $(x,y) \in \Sigma$. Then, the functional space for the observables that we consider for the decay of correlations of the flow $\varphi^t$ on $\Sigma_\rho$ will be the push-forward through $\Pi$ of $C^1_{b,*} ( \Sigma_r)$, that is,
\begin{align}
\CCC_* = \Pi_* \left( C^1_{b,*} ( \Sigma_r) \right) = \bigg\{ u : \Sigma_\rho \rightarrow \R \  | \  u \circ \Pi \in C^1_{b,*} (\Sigma_r) \bigg\}, \label{def:classofobservables}
\end{align}
equipped with the norm $\norm{\cdot}_*$ defined, for every $u \in \CCC_*$, as:
\begin{align}
\norm{u}_* = \norm{u \circ \Pi}_{C_b^1(\Sigma_r)}. \label{def:normofobservables}
\end{align}

We are now in a position to state our main result concerning the decay of correlations for the class of suspension flows described above.

\begin{theorem}
\label{thm:main}
Let $\varphi^t$  be a suspension flow over the base $\bigg(\big( \R^+\setminus \lbrace 1 \rbrace \big) \times \R^+ , \PPP , m\bigg)$ with roof function $\rho$, satisfying  assumptions \textbf{(A1)} - \textbf{(A6)}, \textbf{(B)}, \textbf{(C)}, and \textbf{(D)}. Then, there exist constants $C, \delta\in \R^+$ such that, for all $u,v \in \CCC_*$, and for all $ t \in [0,+\infty )$, we have that
\begin{align*}
\left| \,\int\limits_{\Sigma_\rho} u \cdot v \circ \varphi^t \, dm_\rho -  \int\limits_{\Sigma_\rho} u \, dm_\rho \cdot \int\limits_{\Sigma_\rho} v \, dm_\rho  \right| \le C \cdot \norm{u}_*  \norm{v}_* \cdot e^{-\delta t}.
\end{align*}
\end{theorem}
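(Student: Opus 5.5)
The plan follows the outline in the introduction. First replace $\varphi^t$ by the accelerated suspension flow $\Pt_t$ on $\Sigma_r$ and prove exponential mixing for $\Pt_t$. Then transfer the estimate through the conjugacy $\Pi:\Sigma_r\to\Sigma_\rho$. Since $\Pi$ conjugates $\Pt_t$ with $\varphi^t$ and pushes the normalized invariant measure of $\Pt_t$ to $m_\rho$, the correlation of $u,v\in\CCC_*$ under $\varphi^t$ equals the correlation of $u\circ\Pi,\ v\circ\Pi\in C^1_{b,*}(\Sigma_r)$ under $\Pt_t$. By the definition of $\norm{\cdot}_*$, the theorem therefore reduces to exponential mixing of $\Pt_t$ for $C^1_{b,*}$ observables.

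\textbf{The base map.} Take the first return of $\PPP$ to $(g_0(1),1)\times\R^+$ and then the second iterate of that return map. The excursions are of two kinds: through $x>1$, where $f_1(x)=x-1$, and near the neutral fixed point $0$. The countable Markov partition is indexed by the number of iterations spent in each. Using (A2)--(A4) and (A6), I will show that the resulting map $F$ on $\Sigma$ is uniformly expanding in $x$, has bounded distortion, and uniformly contracts the stable (vertical) fibres, with $y$ unbounded. The sequences $\omega_n^{(i)}$ of (B) control the size of the branches, the contraction in $y$, and hence the tails. Condition (C) gives an absolutely continuous invariant probability for $F$ on $\Sigma$.

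\textbf{The roof.} The induced roof $r$ is a Birkhoff sum of $\rho$. I will use the explicit logarithmic form of $\rho$. Writing $\rho=\rho_0[\ln x-\ln f(x)+\ln f'(x)]-\rho_0[\ln y-\ln g_x(y)+\ln\partial_y g_x(y)]$, the $y$-part is, up to the derivative terms, a coboundary in $y$. So I expect to find $\chi$ with $r=\rt\circ\pi_x+\chi-\chi\circ F$ and $\rt$ depending only on $x$. Summing the $\ln\partial_y g$ terms along orbits gives $\ln$ of the $y$-derivative of the iterate. I plan to handle these by the standard Sinai trick of comparing with a reference stable leaf, which converges thanks to the uniform contraction. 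This reduces the problem to the expanding semiflow over $\bar F$ with roof $\rt$. I then verify the hypotheses of Avila--Gou\"ezel--Yoccoz:
\begin{itemize}
\item $|D(\rt\circ h)|$ is bounded along inverse branches $h$;
\item exponential tails, i.e. $\sum_h e^{\varepsilon\sup \rt\circ h}|h'|_\infty<\infty$, which should follow from (B)(i) with $\sigma_i$ supplying the exponential margin, since $\rt$ grows only logarithmically in the derivatives of the branches;
\item UNI, i.e. $\rt$ is not cohomologous to a locally constant function.
\end{itemize}
For UNI I would exhibit two inverse branches $h_1,h_2$ with $|(\rt\circ h_1-\rt\circ h_2)'|$ bounded away from $0$, using the strict monotonicity in (A5) of $f_0''/(f_0')^2$, which enters the derivative of $\ln f_0'\circ h$.

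\textbf{Exponential mixing and the main obstacle.} The AGY theorem gives exponential mixing for the semiflow. I then lift to the invertible flow $\Pt_t$ in the usual way. Approximate $v\circ\Pt_t$ by its average along stable leaves, with an error $O(e^{-ct})$ from the stable contraction and the uniform continuity of derivatives in the $C^1_{b,*}$ class. Then apply the semiflow estimate with a time split $t/2+t/2$.

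I expect the main obstacle to be non-compactness in $y$. Both the cohomology with $\chi$ and the stable-leaf approximation require $\chi$ bounded, or at least $C^1$ with controlled norm, uniformly on unbounded leaves. They also require the observables to be uniformly regular there, which is exactly why $C^1_{b,*}$ is used. The first thing I would try is to show that the $y$-dependent terms are uniformly Lipschitz in $y$ with bounded sums, using the concavity of $g_0$ and the identity $g_0'=1/f_0'\circ g_0$ together with (A6).
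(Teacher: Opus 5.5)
Your outline follows the paper's architecture. The first return of $\PPP$ to $(g_0(1),1)\times\R^+$ is exactly $\Ph$, and squaring gives $\Pt$. The coboundary $\rho_0\ln(x/y)$ plus Sinai's trick gives $r(x)=\rt(x,y')$. (A5) drives UNI, (B) drives the tails, and the flow is reduced to the semiflow by averaging along fibres. However, two steps do not work as you state them.

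\textbf{UNI.} One pair of branches $h_1,h_2$ of $\Ft$ with $|(\rt\circ h_1-\rt\circ h_2)'|\ge D$ does not exclude a relation $r=\psi+w\circ\Ft-w$ with $w\in C^1$. Such a relation only forces $|D(r^{(n)}\circ h-r^{(n)}\circ k)|\le 2\|Dw\|_\infty\lambda^{-n}$ for branches $h,k$ of $\Ft^n$, where $\lambda>1$ is the expansion of $\Ft$; that is compatible with a fixed gap at $n=1$. What is needed, and what Proposition~\ref{thm:rstandard2} proves, is a lower bound $C_U$ on $\inf|D(r^{(n)}\circ\phi-r^{(n)}\circ\bar\phi)|$ that is uniform in $n$. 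A one-step gap does not propagate by itself, because the other terms of the Birkhoff sum may cancel it. The missing idea is sign coherence. For $\phi=[\phi_2^1]^{2n}$ and $\bar\phi=[\phi_3^1]^{2n}$ the derivative splits into $2n$ terms, and each is positive for three reasons:
\begin{itemize}
\item $J_2^1$ lies to the left of $J_3^1$;
\item $f_0''/(f_0')^2$ is decreasing by (A5);
\item $D\phi_2^1>D\phi_3^1$ because $f_0'$ is increasing.
\end{itemize}
The last term alone contributes at least $\rho_0C_U'$.

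\textbf{Non-compactness.} Here the diagnosis is off.
\begin{itemize}
\item A bounded transfer function cannot exist. $\chi-\chi\circ\Pt$ equals the Birkhoff sum of $\rho$ over one return of $\Pt$ minus $r(x)$. For the modular surface, as $y\to\infty$ along a fibre, its first term $\frac12\ln\frac{1+y}{1-x}$ tends to $+\infty$ while all the others converge.
\item Nor is one needed. $\Pt_t$ lives on $\Sigma_r$ with roof $r(x)$, and $\CCC_*$ is defined through $\Pi$, so only finiteness of the transfer function matters. The Sinai series gives that at suitable points.
\item Unboundedness really matters in the fibre averaging of Proposition~\ref{prop:decadimento}, and ``stable contraction'' alone does not settle it. Contraction gives $|\Gt^n_x(y)-\Gt^n_x(y')|\le g_0'(1)^n|y-y'|$, but $\nu_x$ may have infinite first moment; for the modular surface $d\nu_x=x(x+y)^{-2}\,dy$.
\end{itemize}
The paper uses instead that $\Gh_x$ maps the whole fibre onto an interval of length at most $1$ (Lemma~\ref{lem:G}(3)). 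So after the first return the two points are within distance $1$ and then contract, giving a factor $k^{-\Psi_t+1}$ that Proposition~\ref{prop:ritorni} controls. The region with no return before time $t$ is bounded through $\int_{\{r\ge t\}} r\,d\widehat\mu$, which is exponentially small by the tails.

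Finally, your tails argument tacitly assumes $1/\Gt'_x(y')\asymp\Ft'$ on each branch. This is Lemma~\ref{lemma:ordini}(2), proved in Appendix~\ref{sec:appendixb}. Since (B) gives no lower bound on $g_0'$, $1/\Gt'$ can only be controlled through this comparison.
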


\begin{remark}
For an explicit example of a subclass of observables included in $\CCC_*$, we may consider the space of compactly supported functions in $C_b^1(\Sigma_\rho)$ whose support does not intersect the set of points of the form $[(x,y),\,\rho(x,y)]$ (i.e. those points for which we have the identification between $[(x,y),\,\rho(x,y)]$ and $[\PPP(x,y),0]$). In particular, we have that the conjugacy $\Pi: \Sigma_r \rightarrow \Sigma_\rho$ is piecewise $C^1$, hence it breaks the $C^1-$regularity when crossing the singularities, which are represented by the vertical boundaries 
\begin{align*}
\left\{ [(x,y),s] \in \bigg( \big( \R^+\setminus \lbrace 1 \rbrace \big) \times \R^+ \bigg) \times \R \ \bigg| \ s=0 \ \text{or} \ s=\rho(x,y) \right\},
\end{align*}
in the lift of the phase space $\Sigma_\rho$ of the flow $\varphi^t$ to $\bigg( \big( \R^+\setminus \lbrace 1 \rbrace \big) \times \R^+ \bigg) \times \R$.
\end{remark}

\color{black}
Some comments are in order. The previous theorem cannot be obtained directly from the criteria in \cite{araujomelbourne, avilaGouezelyoccoz,baladivallee} due to the possible lack of uniform hyperbolic behavior of the Poincar\'e map coming from the unbounded domains. Our strategy here is to proceed with a triple inducing process. The first one consists of reducing the Poincar\'e section and to keep track of the relation between the roof functions of both suspension flows. The resulting Poincar\'e map is still not uniformly hyperbolic, as it presents points whose derivative is not a hyperbolic matrix. This requires an extra inducing process, keeping the cross-section but taking the second return time to it, in a way that the Poincar\'e map becomes hyperbolic. Such triple inducing scheme pushes the complexity from the dynamics to the roof function, which is written as a double Birkhoff sum over the original roof function.

Now we note that Theorem~\ref{thm:main} can be used to provide a purely dynamical proof  that the geodesic flow on the modular surface has exponential decay of correlations with respect to its physical measure. 
Bonanno, Del Vigna and Isola in \cite{bonanno_delvigna_isola} give an isomorphic representation of the geodesic flow in terms of a suspension flow where the base map $\PPP$ is a skew-product 
$\PPP(x,y) = (f(x),g_x(y))$
on $((\R^+ \smallsetminus \lbrace 1 \rbrace) \times \R^+$, where
\begin{align}
\label{eq:deffuncgeo}
f(x) = \begin{cases}  \dfrac{x}{1-x}, & \text{if} \ x<1;\\[8pt]   x-1, & \text{if} \ x>1;\end{cases} \ \ \ \ \ \ \ \ \ g_x(y) = \begin{cases}   \dfrac{y}{1+y}, & \text{if} \ x<1,\\[8pt]   y+1, & \text{if} \ x>1;\end{cases}
\end{align}
and the roof function is given by
\begin{equation}
    \label{eq:roof functiongeo}
    \rho (x,y) = 
    \begin{cases} \dfrac{1}{2}\ln \left[ \dfrac{1+y}{1-x}\right], & \text{if} \ x<1; \\[16pt] \dfrac{1}{2}\ln \left[ \dfrac{1+\dfrac{1}{y}}{1-\dfrac{1}{x}}\right], & \text{if} \ x>1. 
    \end{cases}
\end{equation}

Call $\mathcal{G}$ the phase space of the geodesic flow on the modular surface, and $\Sigma^G_\rho$ the phase space of its suspension flow model. The latter is obtained from the original \textit{orbifold} model on $\mathcal{G}$ through a $C^{\infty}-$smooth change of variables $\Gamma: \mathcal{G} \rightarrow \Sigma^G_\rho$ (cf. \cite{bonanno_delvigna_isola}, Appendix B). We define $\widetilde{\CCC}_*$ to be the pull-back via $\Gamma$ of the functional space $\CCC_*$ constructed as in \eqref{def:classofobservables}, i.e.
\begin{align}
\widetilde{\CCC}_* = \Gamma^* \left( \CCC_* \right)  = \left\{ \widetilde{u} : \mathcal{G} \rightarrow \R \, | \, \widetilde{u} \circ \Gamma^{-1} \in \CCC_* \right\} = \left\{ u \circ \Gamma \, | \, u \in \CCC_* \right\}. \label{def:classofobservables2}
\end{align}

In Section~\ref{sec:geodesic} we show that the geodesic flow on the modular surface fits in the framework of Theorem~\ref{thm:main}, and hence we provide a dynamical proof  for the following:

\begin{theorem}
\label{thm:geodesic}
    The geodesic flow on the modular surface has exponential decay of correlations with respect to the Liouville measure and observables in $\widetilde{\CCC}_*$.
\end{theorem}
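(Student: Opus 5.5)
The plan is to deduce Theorem~\ref{thm:geodesic} from Theorem~\ref{thm:main}. By the isomorphism of \cite{bonanno_delvigna_isola}, the change of variables $\Gamma:\mathcal{G}\to\Sigma^G_\rho$ conjugates the geodesic flow to the suspension flow $\varphi^t$ built over $\PPP$ given by \eqref{eq:deffuncgeo}, with roof function \eqref{eq:roof functiongeo}. It also pushes the normalized Liouville measure to $m_\rho$, where $m$ is the $\PPP$-invariant measure with density $(1+xy)^{-2}$ coming from the boundary-coding of geodesics. Since $\widetilde{\CCC}_*=\Gamma^*(\CCC_*)$ and $\norm{\widetilde u}:=\norm{\widetilde u\circ\Gamma^{-1}}_*$, correlations on $\mathcal{G}$ equal correlations on $\Sigma^G_\rho$. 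So it suffices to verify the hypotheses \textbf{(A1)}--\textbf{(A6)}, \textbf{(B)}, \textbf{(C)}, \textbf{(D)}, and to check that \eqref{eq:roof functiongeo} has the form \eqref{eq:defroof functiongen}.

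The one-dimensional conditions are explicit computations with $f_0(x)=x/(1-x)$ and $g_0(y)=y/(1+y)$.
\begin{itemize}
\item \textbf{(A1)}: $f_0(0^+)=0$ and $f_0(1^-)=\infty$.
\item \textbf{(A2)} and \textbf{(A3)}: $f_0'(x)=(1-x)^{-2}>1$, with $f_0'(0^+)=1$.
\item \textbf{(A4)}: $f_0''(x)=2(1-x)^{-3}>0$.
\item \textbf{(A5)} and \textbf{(A6)}: $f_0''/(f_0')^2=2(1-x)$, which is decreasing and bounded by $C_A=2$.
\end{itemize}
For \textbf{(B)}, $g_0'(y)=(1+y)^{-2}$, so $g_0'(n-1)=n^{-2}$. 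Since $g_0^j(1)=1/(j+1)$, the product in (iii) is $\prod_{j=1}^{n-1}\big(\tfrac{j+1}{j+2}\big)^2=4/(n+1)^2$. Taking $\omega^{(i)}_n=n^{-2}$ and any $\sigma_i<1/2$ gives summability of $n^{-2(1-\sigma_i)}$.

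For the roof function I will compute $\frac{x}{y}\frac{g_0(y)}{f_0(x)}\frac{f_0'(x)}{g_0'(y)}$ for $x<1$. This equals $\frac{1-x}{1+y}\cdot\frac{(1+y)^2}{(1-x)^2}=\frac{1+y}{1-x}$, so $\rho_0=1/2$ matches. For $x>1$ the product is $\frac{x}{y}\frac{y+1}{x-1}$, which is the stated formula. Positivity holds on both branches.

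For \textbf{(C)}, I will check invariance of $dm=(1+xy)^{-2}dx\,dy$ under each branch; this is a Jacobian computation, since $1+f_0(x)g_0(y)=(1+xy)/((1-x)(1+y))$ and the Jacobian is $(1-x)^{-2}(1+y)^{-2}$. Ergodicity follows from the ergodicity of the geodesic flow, or from \cite{bonanno_delvigna_isola}. For $a>0$, $\int_a^b\int_0^\infty(1+xy)^{-2}dy\,dx=\int_a^b x^{-1}dx<\infty$, while $m$ itself is infinite because of the $x\to0$ region.

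The main obstacle I expect is \textbf{(D)}, which also yields finiteness of $m_\rho$. The integral $\int\rho\,dm$ has possible divergences at $x\to0$, $x\to1$, $x\to\infty$ and $y\to0,\infty$.
\begin{itemize}
\item For $x<1$: $\int_0^1\int_0^\infty \tfrac12[\ln(1+y)-\ln(1-x)](1+xy)^{-2}dy\,dx$. Here $\int_0^\infty\ln(1+y)(1+xy)^{-2}dy\approx x^{-1}\ln(1/x)$, which is not integrable at $0$ on its own.
\item For $x>1$: the $\ln(1+1/y)$ term integrates to about $\ln x/x$ against $(1+xy)^{-2}$ in $y$, and this is non-integrable as $x\to\infty$.
\end{itemize}
So I expect the individual pieces to diverge and must be careful. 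The first thing I will try is to use invariance and the identity $\int\rho\,dm=$ (Liouville volume of the unit tangent bundle of the modular surface, finite and equal to $\pi^2/3$ up to normalization), proven via the Kac-type formula for suspensions: $m\otimes\Leb(\Sigma_\rho)=\int\rho\,dm$. Equivalently, \textbf{(D)} is exactly the statement that the isomorphism $\Gamma$ carries Liouville, a finite measure, to $m\otimes\Leb$, which \cite{bonanno_delvigna_isola} provides. If an independent check is wanted, I will recompute the double integral exactly, since $\rho\ge0$ permits Tonelli, and confirm that the apparent divergences are artifacts of my rough asymptotics. Having verified all hypotheses, Theorem~\ref{thm:main} gives the exponential bound, and transporting it through $\Gamma$ concludes the proof.
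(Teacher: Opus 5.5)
Your overall strategy matches the paper's: verify \textbf{(A1)}--\textbf{(A6)}, \textbf{(B)}, \textbf{(C)}, \textbf{(D)} for \eqref{eq:deffuncgeo}--\eqref{eq:roof functiongeo}, apply Theorem~\ref{thm:main}, and transport through $\Gamma$. Your checks of \textbf{(A)}, \textbf{(B)} and of the form of $\rho$ are correct. The genuine error is the invariant measure: $(1+xy)^{-2}$ is not $\PPP$-invariant for these maps. The identity you plan to use is false. In fact $1+f_0(x)g_0(y)=\frac{1-x+y}{(1-x)(1+y)}$. On the branch $x>1$, whose Jacobian is $1$, invariance would require $1+(x-1)(y+1)=1+xy$, which also fails. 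The measure from \cite{bonanno_delvigna_isola}, which the paper uses, has density $(x+y)^{-2}$. Its invariance is immediate: $(x-1)+(y+1)=x+y$ on $x>1$, and $f_0(x)+g_0(y)=\frac{x+y}{(1-x)(1+y)}$ with Jacobian $(1-x)^{-2}(1+y)^{-2}$ on $x<1$. Your finiteness computation for \textbf{(C)} survives only by coincidence, because $\int_0^\infty(1+xy)^{-2}dy=\int_0^\infty(x+y)^{-2}dy=1/x$.

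This wrong density is what causes your trouble with \textbf{(D)}, and your proposed way out does not work. For your density the divergences are real, not artifacts of rough asymptotics. On each branch $\rho$ is a sum of two nonnegative terms, and one computes exactly $\int_0^\infty\ln(1+y)(1+xy)^{-2}dy=\frac{\ln(1/x)}{x(1-x)}$. So by Tonelli $\int\rho\,dm=+\infty$, and no exact recomputation can make it finite. Appealing to \cite{bonanno_delvigna_isola} does not rescue this either: their isomorphism carries Liouville to $m\otimes\Leb$ for their $m$, not yours.

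With the correct density nothing is delicate. One has $\int_0^\infty\ln(1+y)(x+y)^{-2}dy=\frac{-\ln x}{1-x}$, and the four nonnegative pieces of $2\int\rho\,dm$ are
$\int_0^1\frac{-\ln x}{1-x}dx$, $\int_0^1\frac{-\ln(1-x)}{x}dx$, $\int_0^\infty\frac{\ln(1+1/y)}{1+y}dy$ and $\int_1^\infty\frac{-\ln(1-1/x)}{x}dx$. Each equals $\pi^2/6$, so $\int\rho\,dm=\pi^2/3$, which is exactly the value you guessed from the Liouville volume. The paper simply cites \cite[Proposition B.2]{bonanno_delvigna_isola} for this. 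Once $m$ is corrected, the rest of your argument goes through as in the paper.
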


\begin{remark}
The space $\widetilde{\CCC}_*$ contains the compactly supported functions in $C_b^1(\mathcal{G})$ whose support does not intersect the cross-section which induces the isomorphism $\Gamma$ between the geodesic flow on the modular surface and its suspension flow representation.
\end{remark}

\section{A criterion for exponential decay of correlations} \label{sec:standard}

In this section we recall a general framework from \cite[Section~2]{avilaGouezelyoccoz} in which exponential decay of correlations for SRB measures can be deduced for suspension flows over piecewise hyperbolic skew-products whose roof functions satisfy a set of good properties, to be described below. Whenever we refer to this context, we will call it the standard setting and the standard assumptions.

Let $\Delta$ be an open subset of a Riemannian manifold, with compact closure and whose boundary is a finite union of smooth hypersurfaces.
Let $\{\Delta^{(l)}\}_\ell$ define an open, measurable and at most countable partition of $\Delta$ so that 
$\bigcup\limits_{l} \Delta^{(l)}$ 
is a full $m$-measure subset of $\Delta$.
Assume that 
$$
T: \bigcup\limits_{l} \Delta^{(l)} \to \Delta
$$ 
is such that: (i) $T\mid_{\Delta^{(l)}}: \Delta^{(l)} \to \Delta$ a $C^1$ diffeomorphism each $l$ (i.e. T is \textit{full-branch}); 
(ii) $T$  is a uniformly expanding Gibbs-Markov map (in particular, it satisfies a generalization of Adler's property in higher dimension on each $\Delta^{(l)}$, uniformly with respect to $l$). 
Such a map preserves a unique $T$-invariant probability measure $\mu \ll Leb$, which is mixing and has a $C^1$ density bounded from above and from below (see e.g. \cite{aaronson, alves, avilaGouezelyoccoz}).

Following \cite{avilaGouezelyoccoz}, we say that a skew-product $\widehat{T}: \widehat{\Delta} \rightarrow \widehat{\Delta}$ 
is a $C^{2}$-\emph{piecewise hyperbolic skew-product} over $T$ if there exists a continuous surjective map $\pi : \widehat{\Delta} \rightarrow \Delta$ such that $T \circ \pi = \pi \circ \widehat{T}$ and the following properties hold:
\begin{enumerate}
\item[(i)] there exists a $\widehat{T}$-invariant probability measure $\nu$ on $\widehat{\Delta}$ and $\pi_*\nu=\mu$;
\item[(ii)] ($C_b^1$-regular disintegration) 
there exists a measurable function
$\Delta\ni x\mapsto \nu_x$ so that $\nu_x$ is a probability on $\widehat{\Delta}$ supported on $\pi^{-1}(x)$ and, for any measurable set $A \subset \widehat{\Delta}$,
\begin{align*}
\nu (A) = \int\limits_{\Delta} \nu_x (A) \, d\mu(x);
\end{align*}
moreover, the disintegration is $C_b^1$-regular, meaning that there exists $C'>0$ such that, for any open set $O\subset \bigcup\limits_l \Delta^{(l)}$, and any $u \in C^1_b(\pi^{-1}(O))$ with uniformly continuous derivatives, 
 the function 
$$
\overline{u}: O \rightarrow \R
\quad \text{given by} \quad \overline{u}(x) = \int\limits_{\widehat{\Delta}} u \, d\nu_x
$$ 
belongs to $C_b^1(O)$ and satisfies the inequality
\begin{align*}
\sup\limits_{x \in O} \| D\overline{u} (x) \| \le C' \!\!\!\! \sup\limits_{z \in \pi^{-1}(O)} \| Du (z) \|;
\end{align*}
\item[(iii)] ($\widehat{T}$ contracts the $\pi$-fibres)  there exists $k \in (0,1)$ so that $d(\widehat{T}(z_1) , \widehat{T}(z_2)) \le k d(z_1,z_2)$ for all $z_1,z_2 \in \widehat{\Delta}$ with $\pi(z_1)=\pi(z_2)$, where $d$ is the metric induced on $\widehat{\Delta}$ by the Riemannian structure.
\end{enumerate}

\medskip
We now proceed to specify the set of properties that the roof function is required to satisfy in \cite{avilaGouezelyoccoz}.
Denote by $\mathcal{H}$ the set of inverse branches of $T$. Assume that the roof function $r: \bigcup\limits_l \Delta^{(l)} \rightarrow \R^+$ satisfies:
\begin{itemize}
\item[(i)] $\inf\limits_{x \in \bigcup\limits_l \Delta^{(l)}} r(x) > 0$;
\item[(ii)] there exists $C''>0$ such that, for every $h \in \mathcal{H}$, we have: $\|D(r \circ h )\|_{C^0} \le C''$;
\item[(iii)] (\textit{Uniform Non-Integrability}) it is not possible to write $r= \psi +\phi \circ T-\phi$ on $\bigcup\limits_l \Delta^{(l)}$, where $\psi: \Delta \rightarrow \R$ is constant on each set $\Delta^{(l)}$ and $\phi: \Delta \rightarrow \R$ is measurable;
\item[(iv)] (\textit{exponential tails}) there exists $\sigma>0$ such that $\int\limits_{\Delta} e^{\sigma r} \, dLeb < +\infty$. 
\end{itemize}

Under the standard assumptions, Avila, Gou\"ezel and Yoccoz \cite{avilaGouezelyoccoz} proved that the suspension flow built over the basis $\widehat{T}$ with roof function $r \circ \pi$ has exponential decay of correlations for $C_b^1(\widehat{\Delta})$ observables. 
More precisely:

\begin{theorem}
\cite{avilaGouezelyoccoz}
\label{thm:AGY}
Let $\varphi^t$ be a suspension flow over a $C^2$-piecewise hyperbolic skew-product and a roof function $r$ satisfying the standard assumptions.
There exist constants $C, \delta>0$ such that, for all $U,V \in C_b^1(\Sigma_r )$ with uniformly continuous derivatives in the component $(x,y) \in \Sigma$, and for all $ t \in [0,+\infty )$, we have that
\begin{align*}
\left| \,\int\limits_{\Sigma_r} U \cdot V \circ \varphi^t \, d\nu_r - \int\limits_{\Sigma_r} U \, d\nu_r \cdot \int\limits_{\Sigma_r} U \, d\nu_r  \right| \le C \cdot \| U \|_{C_b^1(\Sigma_r )} \| V \|_{C_b^1 (\Sigma_r )} \cdot e^{-\delta t}.
\end{align*}
\end{theorem}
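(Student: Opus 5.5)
Since this is the criterion of \cite{avilaGouezelyoccoz}, the plan is to reconstruct the Dolgopyat-type argument in their framework. There are three stages: reduce from the hyperbolic skew-product to the expanding base, control twisted transfer operators, and invert a Laplace transform.

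\textbf{Reduction to the semiflow.} Since $\widehat{T}$ contracts the $\pi$-fibres (property (iii)) and the roof $r\circ\pi$ is constant on fibres, I would first replace the observable $V$ by its fibre averages $\overline{V}(x,s)=\int V(\cdot,s)\,d\nu_x$. The $C^1_b$-regular disintegration (property (ii)) guarantees $\overline{V}\in C^1_b$ with controlled derivative.

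For $U$, I would approximate $U\circ\varphi^{t/2}$ by a function depending only on the base coordinate. The error is at most $\|U\|_{C^1_b}\,k^{n(t/2)}$, where $n(t/2)$ is the number of returns up to time $t/2$. Points with few returns form a set of exponentially small measure, because of the exponential tails (iv) and a large deviation estimate for Birkhoff sums of $r$. This reduces the problem to exponential mixing for the suspension semiflow over $T$, with observables that are $C^1$ in the base and Lipschitz in time.

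\textbf{Twisted transfer operators.} For the semiflow I would write the Laplace transform of the correlation function as a series $\sum_n \int \widehat{\mathcal L}_s^n(\cdot)\,d\mu$. Here $\mathcal{L}_s f=\sum_{h\in\mathcal H} e^{-s\, r\circ h}\,|\det Dh|\, f\circ h$, with $s=\sigma+ib$.

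Using the Gibbs--Markov property and (ii) ($\|D(r\circ h)\|\le C''$), I would prove a Lasota--Yorke inequality for $\mathcal{L}_s$ on $C^1$ with the weighted norm $\|f\|_{1,b}=\|f\|_\infty+\|Df\|_\infty/\max(1,|b|)$. The exponential tails (iv) make $\mathcal{L}_s$ well defined and uniformly bounded for $\Re s>-\sigma_0$. For $|b|$ bounded, aperiodicity, which follows from (iii), together with quasi-compactness shows that $(I-\mathcal{L}_s)^{-1}$ is analytic on a strip except for the simple pole at $s=0$.

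\textbf{Main obstacle: the Dolgopyat estimate.} The hard step is large $|b|$. There I would prove $\|\mathcal{L}_s^{N\ln|b|}\|_{1,b}\le 1-\varepsilon$ (equivalently $\le |b|^{-\gamma}$) for $|\sigma|$ small and $|b|\ge b_0$. I would first extract from the non-coboundary condition (iii) a quantitative UNI statement. It should produce two inverse branches $h_1,h_2$ of some fixed order $n_0$ such that $\psi=r_{n_0}\circ h_1-r_{n_0}\circ h_2$ has derivative bounded below in some direction on an open set. I expect this step to be delicate in infinite-branch settings and would follow the argument of \cite{avilaGouezelyoccoz}.

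With quantitative UNI in hand, the oscillation $e^{ib\psi}$ produces cancellation on a positive proportion of cells of size $\sim 1/|b|$. I would then build Dolgopyat's cone of pairs $(u,v)$, with $|u|\le v$ and $|Dv|\le C|b|v$, that is invariant under dominating operators. Combined with an $L^2$ contraction this gives the bound in $\|\cdot\|_{1,b}$.

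Summing the resulting Neumann series shows that the Laplace transform continues analytically to $\{\Re s>-\delta_0\}$, with polynomial growth in $|b|$. The time-Lipschitz regularity of the observables supplies a factor $1/|b|^2$. Shifting the inversion contour to $\Re s=-\delta$ then gives the bound $C\|U\|\|V\|e^{-\delta t}$, with the pole at $0$ producing the product of the integrals.
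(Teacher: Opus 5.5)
The paper gives no proof of this statement. It quotes it from \cite{avilaGouezelyoccoz}. What it reproduces is only the reduction from the hyperbolic flow to the expanding semiflow: Proposition~\ref{prop:ritorni} (which is \cite[Lemma 8.1]{avilaGouezelyoccoz}) and Proposition~\ref{prop:decadimento} (adapted from \cite[Lemma 8.2]{avilaGouezelyoccoz}). It then invokes \cite[Theorem 7.3]{avilaGouezelyoccoz}. Your semiflow part (twisted operators, Lasota--Yorke in $\|\cdot\|_{1,b}$, quantitative UNI, Dolgopyat cone, contour shift) follows \cite[Section 7]{avilaGouezelyoccoz}.

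\textbf{The gap is in your reduction.} You have assigned the two operations to the wrong observables, and as written neither half works.

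In $\int U\cdot V\circ\varphi^t\,d\nu_r$ it is $V$ that is composed with the flow, and you cannot replace it by $\overline{V}$ there. Fibre contraction does make $V\circ\varphi^t$ almost constant on the fibre over $(x,a)$. But that constant is the value of $V$ on the tiny set $\widehat{T}^n(\pi^{-1}(x))$, whose position inside the fibre over $T^nx$ depends on the branch through $x$. It is not the $\nu_{T^nx}$-average $\overline{V}$. The two correlations agree only in the limit, by the very mixing you are trying to prove.

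Conversely, $U\circ\varphi^{t/2}$ does not occur in the correlation. If you create it using invariance of $\nu_r$, its fibre average has base derivative of the order of the unstable expansion, exponentially large in $t$. So it cannot serve as the $C^1$ observable on which the transfer operator acts.

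\textbf{The correct reduction.} Write $V\circ\varphi^{2t}=(V\circ\varphi^t)\circ\varphi^t$. Replace $V\circ\varphi^t$ by the fibre-constant function $V_t(x,a)=\int V\circ\varphi^t[(x,y),a]\,d\nu_x(y)$. The error is $O(\|U\|_\infty\|V\|_{C^1_b}e^{-\delta t})$, coming from fibre contraction, the return-count estimate on $\int k^{-\Psi_t}$, and the exponential tails.

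Only then apply the disintegration to the other factor: $\int U\cdot V_t\circ T_t\circ\pi_r\,d\nu_r=\int\overline{U}\cdot V_t\circ T_t\,d\mu_r$. Here $T_t$ is the expanding semiflow and $\mu_r$ its invariant measure. So the regular disintegration (ii) is needed for $\overline{U}$, not for $\overline{V}$.

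You claim both reduced observables are $C^1$ in the base; they are not, and the semiflow statement you need is asymmetric:
\begin{itemize}
\item $\overline{U}$ is $C^1$, with norm controlled by $\|U\|_{C^1_b}$.
\item $V_t$ is only bounded, with bounded flow-direction derivative uniformly in $t$; its base derivative blows up.
\end{itemize}
Your Dolgopyat and Laplace-transform argument must therefore put $\overline{U}$ in $\|\cdot\|_{1,b}$ and use only $L^\infty$ and flow-direction bounds on $V_t$. That is exactly the form of \cite[Theorem 7.3]{avilaGouezelyoccoz}.
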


\begin{remark}
    One can not use the directly the previous approach to prove exponential decay of correlations 
    for the family of suspension flows, described in the previous Section \ref{sec:family}. In fact, the latter does not fit into the standard conditions listed above, as:
\begin{itemize}
\item[(a)] the phase space for the base is an unbounded subset of $\mathbb R^2$ (both in the first and the second component);
\item[(b)] the measure $m$ preserved by the skew product may be infinite;
\item[(c)] the map $f$ is not uniformly expanding, and the maps $g_x$ are not uniformly contracting;
\item[(d)] the roof function $\rho$ may not be bounded away from zero on $\R^+\times \R^+$.
\item[(e)] over the set $(1,+\infty)\times \R^+$, the roof function is cohomologous to $0$: if $(x,y) \in (1,+\infty) \times \R^+$ then
\begin{align*}
u(x,y) = \rho_0 \ln \left(\dfrac{y}{x} \right),
\end{align*}
and
\begin{align*}
\rho(x,y) = \rho_0\ln \left[ \dfrac{x}{y}\cdot \dfrac{g_1(y)}{f_1(x)}\cdot \dfrac{f_1'(x)}{g_1'(y)}\right] = \rho_0\ln \left(\dfrac{g_1(y)}{f_1(x)} \right) - \rho_0\ln \left( \dfrac{y}{x} \right) = u(\PPP(x,y))-u(x,y).
\end{align*}
\end{itemize}

\end{remark}

Despite all these obstructions, we outline a strategy that eludes them and allows us to use the full power of the techniques described in \cite{avilaGouezelyoccoz}. In view of the previous discussion, our approach here is to consider a suitable inducing scheme that will allow one to observe the original suspension flow as a suspension flows (on a reduced cross-section and second Poincar\'e return time) that verifies the standard assumptions of Avila, Gou\"ezel and Yoccoz.


\section{An inducing scheme for the base transformation} \label{sec:base}
We are interested in the asymptotical statistical mixing properties of the suspension flow $\varphi^t$. In order to explore them, we induce the system on a subset of the phase space and redirect our investigation to the behaviour of a suitable induced map.

The idea is to induce on a set of finite measure so that we can work within a probability space framework. We then need to verify whether the original assumptions \textbf{(A)}, \textbf{(B)}, \textbf{(C)}, and \textbf{(D)} on $f_0,g_0$, and $\rho$ from Section \ref{sec:family} carry over to the induced map, ensuring that it satisfies the standard conditions. Let us take advantage of the fact that $\mathcal{P}$ is a skew-product (recall ~\eqref{eq:Passkew}) in order to initially study the first ({unstable}) component.

\subsection{Inducing scheme for $f$} \label{sec:firstcomponent}
Let $\pi: \R^2 \rightarrow \R$ be the canonical projection on the first component. Since $f\circ \pi = \pi \circ \PPP$ and $m$ is $\PPP$-invariant, one knows that $f$ preserves the measure $\pi_* m$.
Note that the dynamics of $f$ combines a branch with complex behavior in the interval $(0,1)$ (with the presence of indifferent behavior combined with a singularity at $x=1$)  and a branch defined on $(1,+\infty)$ (which is responsible for the recurrence, by bringing the orbits of points back to $(0,1)$). In this way, we induce the map $f$ by its first Poincar\'e return time to the interval $(0,1)$. For every $s \ge 1$, set 
\begin{align}
I_s =  (a_s , b_s ) = \left(g_0(s-1) , g_0(s) \right), \label{Is}
\end{align}
and observe that $I_s = \left( g_0 \circ g_1^{s-2}(1), g_0 \circ g_1^{s-1} ( 1) \right)$. 

Note that $a_{s+1}=b_s$ for every $s\ge 1$. 
Hence, 
$\left\{ {I_s} \right\}_{s \ge 1}$ forms a partition of $(0,1)$ up to a zero Lebesgue measure subset.
Moreover, since $f_0$ is monotonically increasing, $f_0 \left(I_s\right)=(s-1,s)$, hence  in particular $f_0 (a_s) = s-1$ and $f_0 (b_s) = s$.
Now let $\tau : (0,1) \rightarrow \mathbb{N}$ be the first return time map of $f$ to $(0,1)$ defined as
\begin{align}
\tau (x) := \inf \left\{n \ge 1 \, | \, f^n (x) \in (0,1) \right\}.\label{firstreturn1}
\end{align}

\begin{figure}[htb]
    \centering
    \begin{minipage}{0.45\textwidth}
        \centering
        \vspace{-1.3cm}
        \includegraphics[width=1.2\linewidth]{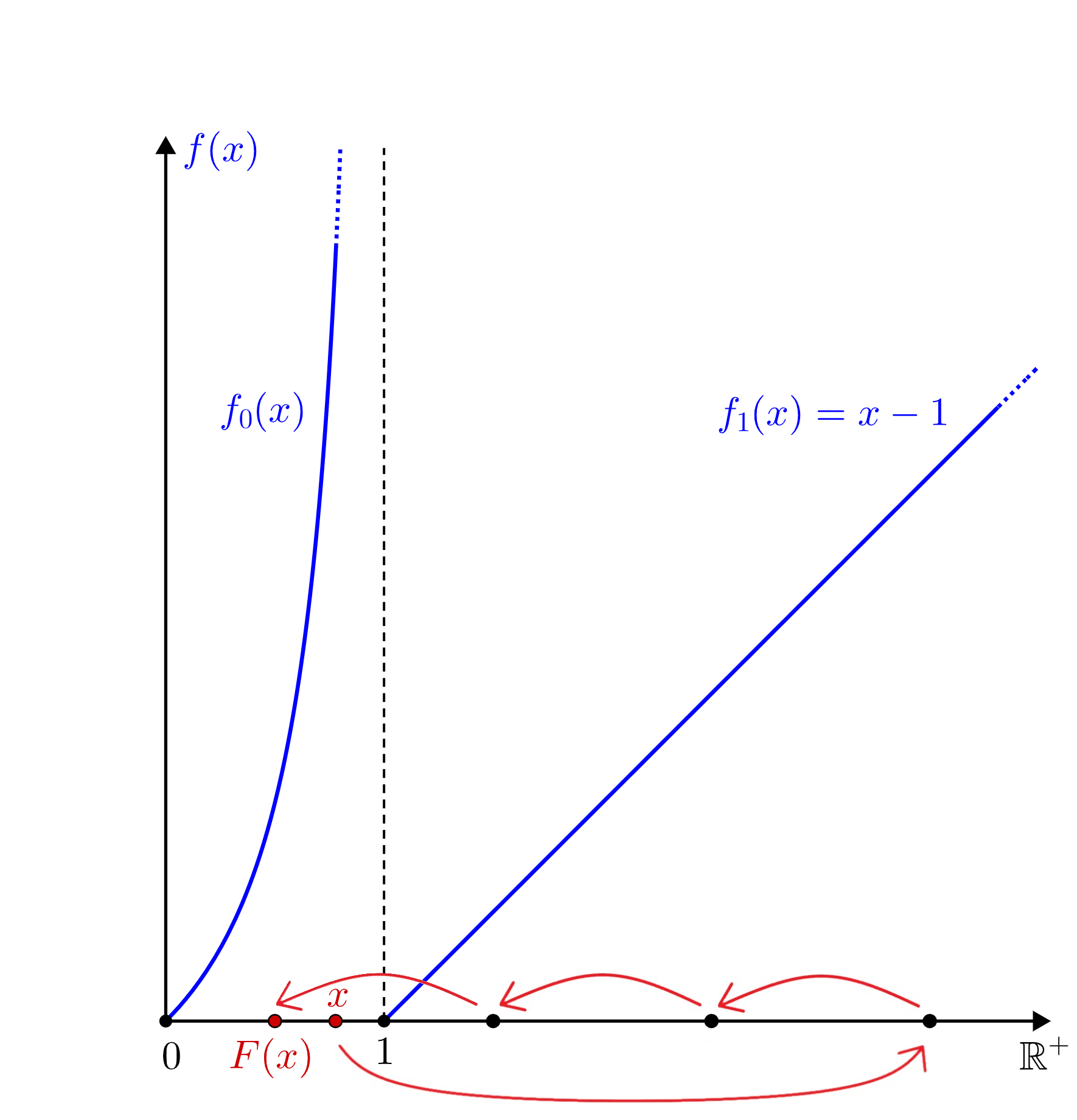}
    \end{minipage}\hfill
    \begin{minipage}{0.45\textwidth}
        \centering
        \includegraphics[width=1.1\linewidth]{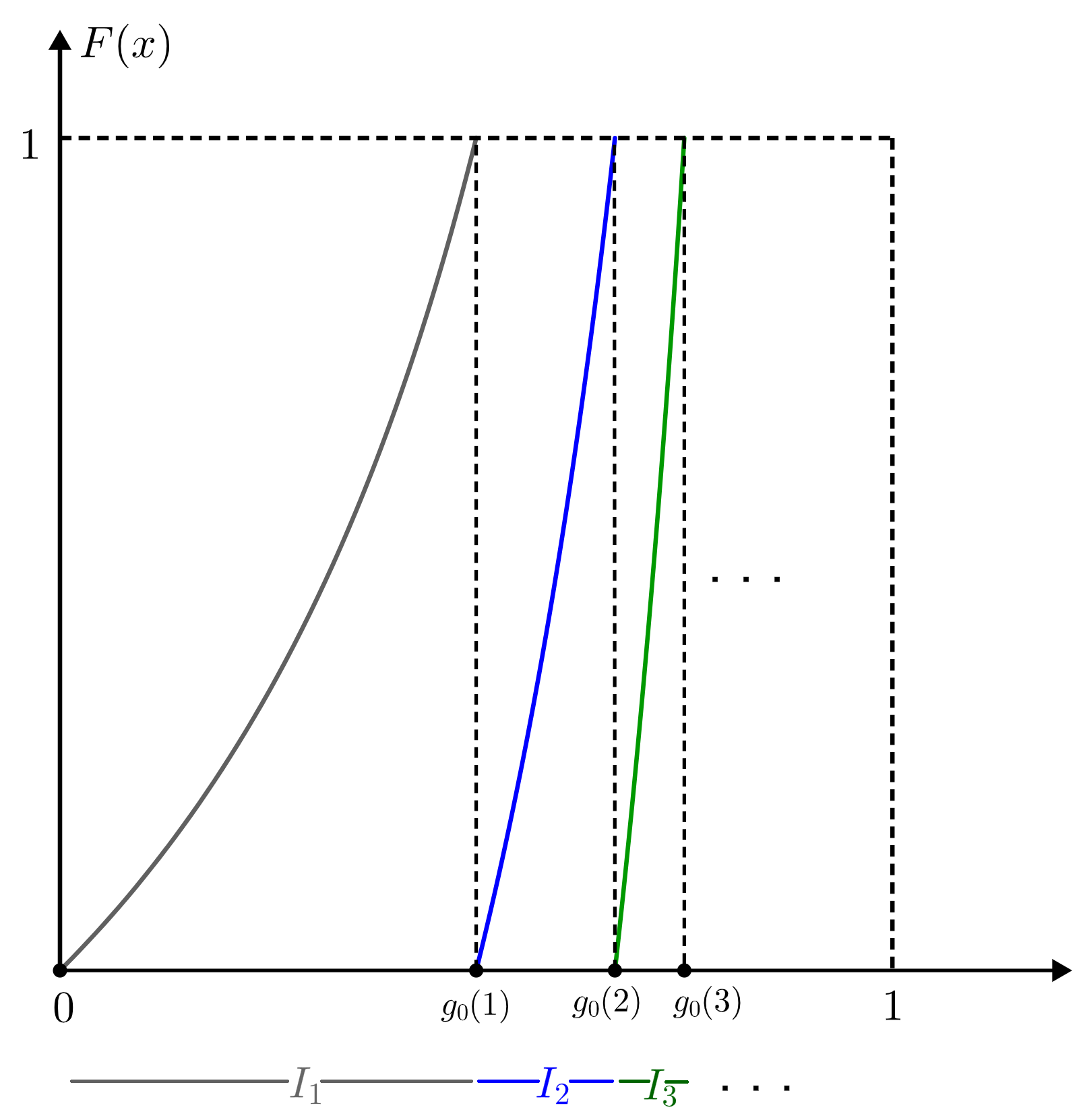}
    \end{minipage}
    \caption{Construction of the induced map $F$ and intervals $I_s$.}
    \label{fig:firstinducing}
\end{figure}

\begin{lemma}
\label{lem:tau}
For every $s \ge 1$ we have that $\tau|_{I_s} \equiv s$. In particular $\tau$ is finite on $\bigcup\limits_{s \ge 1} I_s$.
\end{lemma}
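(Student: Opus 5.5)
Fix $s \ge 1$ and $x \in I_s$. The plan is to follow the orbit of $x$ under $f$ explicitly, using the two branches of $f$: the expanding branch $f_0$ on $(0,1)$ and the translation $f_1(x)=x-1$ on $(1,+\infty)$.

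The first step is to compute $f(x)$. Since $x\in I_s\subset(0,1)$, we have $f(x)=f_0(x)$. Because $f_0$ is strictly increasing and $f_0(I_s)=(s-1,s)$, as observed above, it follows that $f(x)\in(s-1,s)$.

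The second step is an induction on $k$ to show
\[
f^{k}(x)=f_0(x)-(k-1)\in(s-k,\,s-k+1), \qquad k=1,\dots,s.
\]
The base case $k=1$ is the first step. Suppose the claim holds for some $k\le s-1$. Then $s-k\ge 1$, so $f^k(x)>1$ and the branch $f_1$ applies. This gives $f^{k+1}(x)=f^k(x)-1\in(s-k-1,s-k)$, which is the claim for $k+1$.

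Two consequences follow from this formula. For $1\le k\le s-1$ the lower endpoint satisfies $s-k\ge 1$, so $f^k(x)\in(1,+\infty)$ and $f^k(x)\notin(0,1)$. For $k=s$ we get $f^s(x)\in(0,1)$. Together these give $\tau(x)=s$.

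The case $s=1$ needs a quick separate check. Here $f_0(I_1)=(0,1)$, so the return happens immediately and $\tau\equiv 1$ on $I_1$. The range $1\le k\le s-1$ is then empty, so nothing else has to be verified.

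For finiteness, recall that $\{I_s\}_{s\ge1}$ covers $(0,1)$ up to the countable set of endpoints $\{b_s\}$. Every point of $\bigcup_s I_s$ therefore has $\tau=s<\infty$ for the unique $s$ with $x\in I_s$.

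There is no genuine obstacle. The one point requiring care is that the intervals are open: this keeps iterates away from the singular value $1$, because $f^k(x)$ is never an integer. As a result, the definition $f(1)=0$ never enters the argument.
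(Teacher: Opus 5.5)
Your proof is correct and follows essentially the same route as the paper. The paper computes the image intervals $f^k(f(I_s)) = f_1^k((s-1,s)) = (s-1-k,s-k)$ for $0\le k\le s-1$ as sets, whereas you follow a single point by induction. Your explicit check that the intermediate iterates lie in $(1,+\infty)$, and not merely that their images differ from $(0,1)$, is exactly what the first-return claim needs, and it is slightly more careful than the paper's ``if and only if'' phrasing.
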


\begin{proof}

Take $s \ge 1$ and observe that
\begin{align*}
f^k(f(I_s))= f^k (f_0 (I_s)) = f^k ((s-1,s))=f_1^k((s-1,s))=(s-1-k,s-k)
\end{align*}
for any $0 \le k \le s-1$. This implies that $f^k(f (I_s)) = (0,1)$ if and only if $k=s-1$.\\
Therefore, $f^s (I_s)=f_1^{s-1}(f_0 (I_s))=(0,1)$, so that $\tau|_{I_s} \equiv s$. This proves teh lemma.
\end{proof}

In view of the previous lemma we can take the induced map
\begin{align}
F= f(\cdot)^{\tau(\cdot )} : \bigcup\limits_{s \ge 1}I_s \rightarrow (0,1). \label{inducedmap1}
\end{align}
By construction and Lemma \ref{lem:tau}
one knows that 
$F$ is a piecewise $C^2$ full-branch piecewise map on the measurable partition $\lbrace I_s \rbrace_{s \ge 1}$, and $F|_{I_s}$ is a strictly increasing $C^2$ function with $C^2$ extension to the closure $\overline{I_s}$, and 
\begin{align}
F(x) = f_1^{s-1}\circ f_0 (x) = f_0(x) - (s-1), \quad \text{for every $x\in {I_s}$}. \label{expressionforF}
\end{align}
Hence  $F'(x) = f_0'(x)$ for every $x\in \bigcup\limits_{s \ge 1}I_s$.

\medskip
The induced map $F$ preserves the measure $\pi_*m\mid_{(0,1)}$ which may be infinite. In order to obtain a Gibbs-Markov map over a finite measure set we perform a second inducing scheme. We start by noting that,
since $\bigcup\limits_{s \ge 2} I_s$ has infimum $a_2=b_1=g_0(1)>0$, assumption 
{\bf (C)} ensures that
\begin{align*}
\pi_*m \big( \bigcup\limits_{s \ge 2} I_s \big) =  \pi_*m ((g_0(1),1)) = m ((g_0(1),1) \times \R^+)<+\infty.
\end{align*}
We proceed to consider the second induced map of $f$, obtained as the first return time map $\hat F$ of $F$ to the interval $\bigcup\limits_{s \ge 2} I_s$. For our convenience, let us first define the following objects, for every $s,q \ge 1$, with $s \ge 2$:
\begin{itemize}
\item $J^q =  \left[ F^{-1} \left( g_0^q (1) , g_0^{q-1}(1) \right)\right] \cap (g_0(1),1)$.
\item $J_s^q = J^q \cap I_s = \left[ F|_{I_s} \right]^{-1} \left( g_0^q (1) , g_0^{q-1}(1) \right)$.
\item $\Delta = \bigcup\limits_{q \ge 1} J^q = \bigcup\limits_{\substack{s \ge 2 \\ q \ge 1}}J_s^q$.
\item $c_s^q = g_0 \circ g_1^{s-1}\circ g_0^q (1)$; and $d_s^q = g_0 \circ g_1^{s-1}\circ g_0^{q-1} (1)$.
\end{itemize}


Observe that, by the definitions, $J^q$ is the disjoint union of intervals $J_s^q$, for $s \ge 2$, each of which is respectively contained in $I_s$.

\begin{lemma}
\label{lemma:J}

Consider $s,q \ge 1$, $s \ge 2$. Then, $J_s^q = (c_s^q , d_s^q )$.
\end{lemma}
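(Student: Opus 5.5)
The plan is to compute $J_s^q = \left[ F|_{I_s} \right]^{-1}\left( g_0^q(1), g_0^{q-1}(1)\right)$ directly from the explicit form of $F$ on $I_s$. By \eqref{expressionforF}, $F|_{I_s} = f_1^{s-1}\circ f_0|_{I_s}$. This map is a strictly increasing bijection from $I_s$ onto $(0,1)$: first $f_0(I_s)=(s-1,s)$, and then $f_1^{s-1}$ translates this interval onto $(0,1)$. Its inverse is therefore $(F|_{I_s})^{-1} = g_0\circ g_1^{s-1}$ restricted to $(0,1)$. This uses $g_0=f_0^{-1}$ and $g_1 = f_1^{-1}$, where $g_1(x)=x+1$ is a translation.

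The first check is that the target interval lies inside $(0,1)$, so that the preimage is taken entirely within the range of $F|_{I_s}$. Since $g_0$ maps $\R^+$ into $(0,1)$ and is increasing, we have $g_0^q(1) < g_0^{q-1}(1)\le 1$ for every $q\ge1$. Indeed, $g_0(x)<x$ because $f_0(x)>x$ on $(0,1)$, which follows from (A1) and (A2) by integrating $f_0'>1$ starting from $0$. Iterating this inequality gives the strict decrease of $g_0^q(1)$, and $g_0^q(1)>0$ for all $q$. Hence $(g_0^q(1),g_0^{q-1}(1))\subset(0,1)$, where the case $q=1$ gives the interval $(g_0(1),1)$.

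Since $g_0\circ g_1^{s-1}$ is strictly increasing (a composition of increasing maps) and continuous, it maps the open interval $(g_0^q(1),g_0^{q-1}(1))$ onto the open interval with endpoints $g_0\circ g_1^{s-1}\circ g_0^q(1)=c_s^q$ and $g_0\circ g_1^{s-1}\circ g_0^{q-1}(1)=d_s^q$. Therefore $J_s^q=(c_s^q,d_s^q)$.

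There is one remaining consistency issue: $J^q$ is defined with the extra intersection with $(g_0(1),1)$. For $s\ge2$, however, $I_s\subset(g_0(1),1)$ because $a_s\ge a_2=g_0(1)$. So $J^q\cap I_s$ coincides with $(F|_{I_s})^{-1}$ of the target interval, and the intersection imposes no additional restriction. There is no real obstacle here; the only point needing care is that $F|_{I_s}$ is onto $(0,1)$, so that taking the preimage does not truncate the interval.
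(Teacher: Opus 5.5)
Your proof is correct and is essentially the paper's argument. Like the paper, you invert $F|_{I_s}=f_1^{s-1}\circ f_0$ as $g_0\circ g_1^{s-1}$ on $(0,1)$ and use monotonicity to send the endpoints $g_0^q(1)$, $g_0^{q-1}(1)$ to $c_s^q$, $d_s^q$. Your extra checks are valid: that $(g_0^q(1),g_0^{q-1}(1))\subset(0,1)$, and that $I_s\subset(g_0(1),1)$ for $s\ge 2$, so the intersection in the definition of $J^q$ changes nothing. The paper leaves both of these implicit.
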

\begin{proof}

Since $F|_{I_s} = f_1^{s-1} \circ f_0$ is continuous and increasing, 
\begin{align*}
(c_s^q,d_s^q) & = \left[ F|_{I_s} \right]^{-1} \left( g_0^q (1) , g_0^{q-1}(1) \right) = g_0 \circ g_1^{s-1} \left( g_0^q (1) , g_0^{q-1}(1) \right) \\
&= \left(g_0 \circ g_1^{s-1}\circ g_0^q (1) , g_0 \circ g_1^{s-1}\circ g_0^{q-1} (1) \right)
\end{align*}
(recall that $f_0^{-1}=g_0$ and $f_1^{-1} = g_1$). This proves the lemma.
\end{proof}

Note that both $\left\{ {J^q} \right\}_{q \ge 1}$ and $\left\{ {J_s^q} \right\}_{\substack{q \ge 1\\ s \ge 2}}$ are partitions of the interval $\left( g_0(1), 1 \right)$ ($\Leb$ mod 0) and that,
for each fixed $s$, $\left\{ \overline{J_s^q} \right\}_{q \ge 1}$ is a partition of the interval $(a_s, b_s]$
($\Leb$ mod 0).
Now, define the first return time $\kappa : \left(g_0(1),1\right) \rightarrow \mathbb{N}$  of $F$ to $\left(g_0(1),1\right)$ as
\smallskip
\begin{align}
\kappa (x) := \inf \left\{n \ge 1 \, | \, F^n (x) \in \left(g_0(1),1\right) \right\}. \label{firstreturntime2}
\end{align}

\begin{lemma}
\label{lem:J}
For every $q \ge 1$, we have that $\kappa|_{J^q} \equiv q$.
\end{lemma}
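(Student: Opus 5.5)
Fix $q \ge 1$ and $x \in J^q$, so $x \in J_s^q$ for some $s \ge 2$. By Lemma~\ref{lemma:J} and the definition of $J_s^q$,
\begin{align*}
F(x) \in \left(g_0^q(1), g_0^{q-1}(1)\right).
\end{align*}
The plan is to follow the orbit of $F(x)$ under $F$ and check at which step it first lands in $(g_0(1),1)$. The key observation is that on $I_1=(a_1,b_1)=(g_0(0),g_0(1))=(0,g_0(1))$ the induced map acts as $F = f_0$, by \eqref{expressionforF} with $s=1$. Hence $F$ maps $(g_0^{j}(1), g_0^{j-1}(1))$ onto $(g_0^{j-1}(1), g_0^{j-2}(1))$ for every $j \ge 2$, because $f_0 = g_0^{-1}$ is increasing.

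First I record that the intervals $(g_0^{j}(1), g_0^{j-1}(1))$, $j\ge 1$, partition $(0,1)$ up to endpoints. Indeed $g_0^j(1)$ decreases to $0$: it is decreasing because $g_0(y)<y$, which follows from $f_0(x)>x$ (a consequence of (A1)--(A2)), and its limit is a fixed point of $g_0$, which must be $0$. For $j \ge 2$ these intervals lie in $(0,g_0(1)) = I_1$, while for $j=1$ the interval is exactly $(g_0(1),1)$.

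Next, an induction on $k = 0,\dots,q-1$ gives
\begin{align*}
F^{k+1}(x) \in \left(g_0^{q-k}(1), g_0^{q-k-1}(1)\right).
\end{align*}
The case $k=0$ is the display above. For the inductive step with $k < q-1$, the point $F^{k+1}(x)$ lies in an interval with index $q-k \ge 2$, so it is contained in $I_1$, where $F = f_0$ shifts the index down by one.

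To conclude, for $1 \le n \le q-1$ the point $F^n(x)$ lies in the interval with index $q-n+1 \ge 2$, which is disjoint from $(g_0(1),1)$. For $n=q$ the index is $1$, so $F^q(x)\in(g_0(1),1)$. Therefore $\kappa(x)=q$. When $q=1$ the argument is immediate, since already $F(x)\in(g_0(1),1)$.

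There is no real obstacle here. The only care needed is the bookkeeping that $I_1 = (0,g_0(1))$ (using $g_0(0^+)=0$ from (A1)) and that the orbit avoids the excluded endpoints, which is harmless since these form a countable, measure-zero set.
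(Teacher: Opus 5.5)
Your proof is correct and takes essentially the same approach as the paper: $F$ maps $J^q$ into $(g_0^q(1),g_0^{q-1}(1))$, and since $F=f_0$ on $I_1$, each further iterate lowers the index by one until the orbit lands in $(g_0(1),1)$ at step $q$. You also make explicit two points the paper leaves implicit: that $g_0^j(1)$ is decreasing, so the intermediate intervals lie in $I_1$, and that $F^n(x)\notin(g_0(1),1)$ for $1\le n\le q-1$, which is what makes $q$ the \emph{first} return time.
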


\begin{proof}
Fix $q \ge 1$.
As $J^q = F^{-1} \left( g_0^q (1) , g_0^{q-1}(1) \right)\cap (g_0(1),1)$, the map $F$ maps $J^q$ to $\left( g_0^q (1) , g_0^{q-1} (1)\right)$.
If $q=1$, we are done, as $\kappa (J^1) = 1$, by definition. Otherwise, if $0\le j \le q-2$ then $\left( g_0^{q-j} (1) , g_0^{q-1-j}(1) \right) \subset I_1$, while $F$ coincides with $f_0$ (recall \eqref{expressionforF}), thus
\begin{align*}
F^j \left( g_0^{q} (1) , g_0^{q-1}(1) \right) &= f_0^j \left( g_0^{q} (1) , g_0^{q}(1) \right) = \left( f_0^j \circ g_0^{q} (1) , f_0^j \circ g_0^{q-1}(1) \right)=  \left( g_0^{q-j} (1) , g_0^{q-1-j}(1) \right).
\end{align*}
Note that 
$F^{q-2} \left( g_0^{q} (1) , g_0^{q-1}(1) \right)=\left( g_0^{2} (1) , g_0(1) \right)$ and, consequently,
$$
F \left( g_0^{2} (1) , g_0(1) \right) = f_0 \left( g_0^{2} (1) , g_0(1) \right) = \left( g_0 (1) , 1 \right).
$$
This proves that $F^q (J^q)=\left( g_0 (1) , 1 \right)$, as desired.
\end{proof}

The previous lemma ensures that $\kappa$ is well-defined and finite on $\Delta$.
%
Lemmas \ref{lem:tau} and \ref{lem:J} show that the return times $\tau$ and $\kappa$ are piecewise constant and that 
\begin{equation}
    \label{eq:collectiontauk}
    \tau(x) = s \quad \text{and} \quad \kappa(x)=q, \qquad \text{for every $x \in J_s^q$, $s\ge2$, $q\ge1$}
\end{equation}
Finally, let us define the induced map 
\begin{align}
\Fh = F(\cdot)^{\kappa(\cdot )} : \Delta \rightarrow \left(g_0(1),1\right). \label{inducedmap2}
\end{align}

\begin{figure}[htb]
    \centering
    \includegraphics[width=0.55\linewidth]{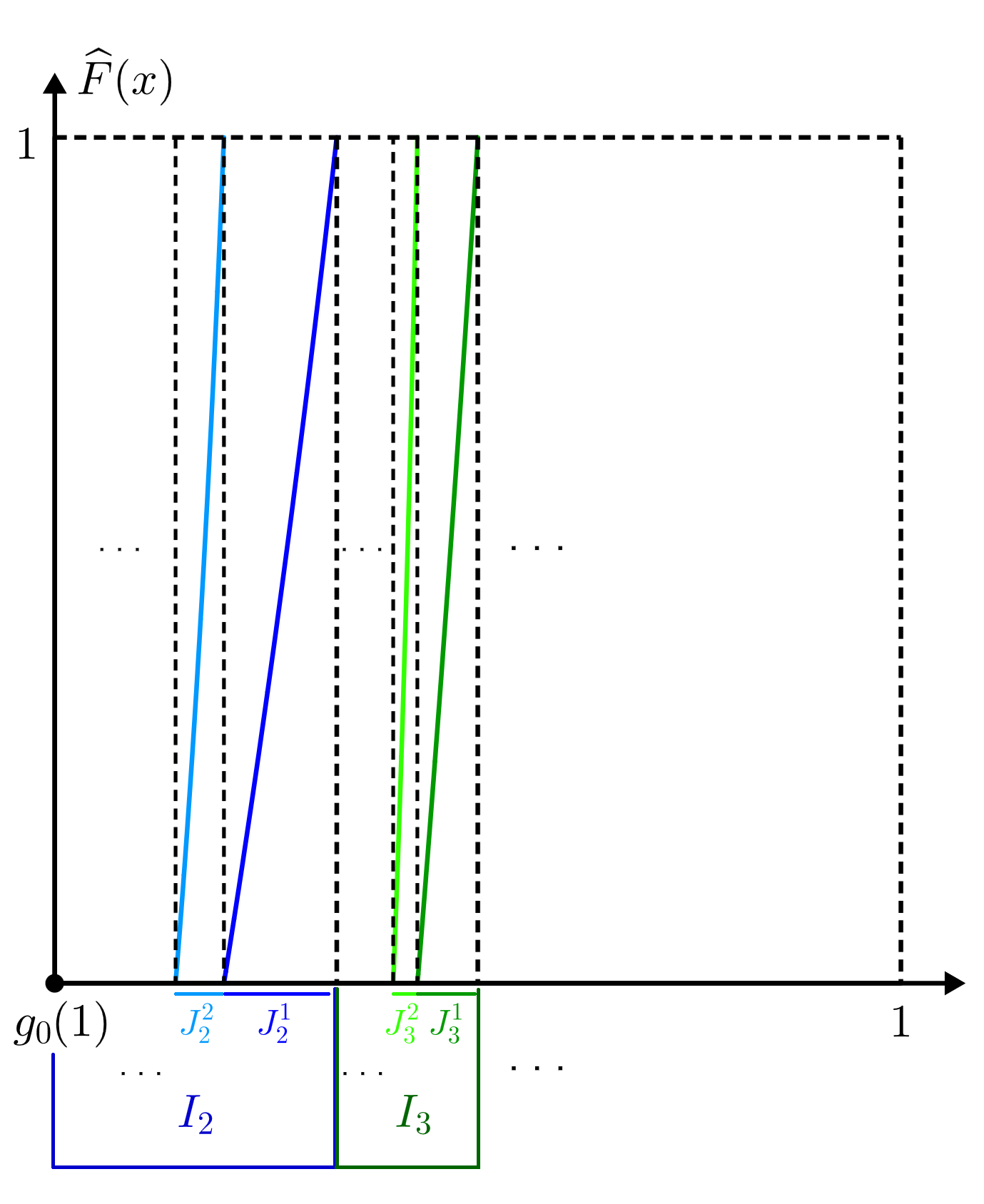}
    \caption{An illustration of the second induced map $\Fh$ and intervals $J_s^q$.}
    \label{fig:fhat}
\end{figure}

Recall that $J_s^q=(c_s^q,d_s^q)$
for each $s,q\ge 1$, with $s\ge 2$.
For any function $\varphi: \Delta \rightarrow \R$, it is convenient to adopt the notation that, for $c_s^q$ and $d_s^q$,
\begin{align*}
\varphi (c_s^q):=\lim\limits_{x \to (c_s^q)^+} \varphi(x), \ \ \ \ \ \ \ \ \varphi (d_s^q) := \lim\limits_{x \to (d_s^q)^-}\varphi(x),
\end{align*}
whenever the limits are well-defined.
From now on, if not specified, we intend $s$ and $q$ as positive natural numbers, with $s \ge 2$.
Furthermore, given $x \in J_s^q$, throughout the paper we will make large use of the following formulas, obtained by straightforward computations:
\begin{align}
&\widehat{F}(x) = F(x ) ^{\kappa (x )}|_{J_s^q}= F|_{I_1}^{q-1}\circ F|_{I_s}(x) = f_0^{q-1}\circ f_1^{s-1} \circ f_0(x). \label{expressionFhat} \\
&\Fh'(x) = \prod\limits_{l=0}^{q-1}f_0'(F^l(x)) = \left[ \prod\limits_{l=0}^{q-2} f_0' (f_0^l \circ f_1^{s-1} \circ f_0 (x)) \right] \cdot f_0'(x). \label{derivativeFhat}\\
&\Fh''(x) = \sum\limits_{j=0}^{q-1} \left[\prod\limits_{l=0}^{j-1} f_0'(F^l (x))^2 \right] \cdot f_0''(F^j(x)) \cdot \left[ \prod\limits_{l=j+1}^{q-1} f_0'(F^l(x)) \right]. \label{secondderivativeFhat}
\end{align}

While it is clear that $\Fh|_{J_s^q}$ and $\Fh'|_{J_s^q}$ are strictly increasing we need to check that the map $\Fh$ exhibit an expanding behavior.

\begin{proposition}
\label{prop:MarkovGibbs}
The following properties hold:
\begin{itemize}
\item[$\boldsymbol{(1)}$] (full-branch) $\Fh: J_s^q \rightarrow (g_0(1),1)$ is a $C^2$ diffeomorphism;
\item[$\boldsymbol{(2)}$] 
(uniform expansion)
$\Fh'(x) > f_0'(g_0(1)) > 1, \ \forall \, x \in \Delta$; 
\item[$\boldsymbol{(3)}$] 
(Adler's property)
$\widehat{C}_A = \sup\limits_{\substack{s \ge 2\\q \ge 1}} \left( \sup\limits_{x \in J_s^q} \dfrac{\widehat{F}''(x)}{(\widehat{F}'(x))^2} \right) < +\infty$;
\item[$\boldsymbol{(4)}$]  (bounded distortion)
for $x,y \in J_s^q$, $x \neq y$,
\begin{align*}
\bigg| \ln  \left[ \dfrac{\widehat{F}'(x)}{\widehat{F}'(y)} \right] \bigg| \le \widehat{C}_A \left| \widehat{F}(x)-\widehat{F}(y)\right|. \label{boundeddistortionFhat}
\end{align*}
\item[$\boldsymbol{(5)}$] 
(double induction)
$\Fh=f^{\tau^k}$, where $\tau^k=\sum_{j=0}^{k-1} \tau \circ F^j$.
\end{itemize}
\end{proposition}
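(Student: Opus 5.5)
We treat the five items in the order (1), (5), (2), (3), (4), since (4) follows from (3) and (2) needs only the explicit formulas \eqref{expressionFhat}--\eqref{derivativeFhat}.

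For \textbf{(1)}, we use the composition \eqref{expressionFhat}, $\Fh|_{J_s^q}=f_0^{q-1}\circ f_1^{s-1}\circ f_0$. The proofs of Lemmas \ref{lemma:J} and \ref{lem:J} track the image intervals: $f_0(J_s^q)=g_1^{s-1}(g_0^q(1),g_0^{q-1}(1))$, then $f_1^{s-1}$ sends it to $(g_0^q(1),g_0^{q-1}(1))$, and $f_0^{q-1}$ sends that onto $(g_0(1),1)$. Each factor is a $C^2$ increasing bijection with $C^2$ inverse on the relevant interval, so the composition is a $C^2$ diffeomorphism onto $(g_0(1),1)$.

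For \textbf{(5)}, on $J_s^q$ we have $\kappa=q$ and $\Fh=F^q$. The first iterate $F(x)$ equals $f^{\tau(x)}(x)=f^s(x)$. The remaining $q-1$ iterates take place in $I_1$, where $\tau\equiv1$, so $F=f_0=f$ there. Hence $\Fh=f^{s+q-1}=f^{\sum_{j<q}\tau\circ F^j}$, which is the claim with $k=\kappa(x)$.

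For \textbf{(2)}, we use \eqref{derivativeFhat} together with $f_0'>1$ from (A2). It gives $\Fh'(x)\ge f_0'(x)$. Since $x\in J_s^q\subset(g_0(1),1)$ and $f_0'$ is strictly increasing by (A4), we get $f_0'(x)>f_0'(g_0(1))>1$.

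For \textbf{(3)}, we divide \eqref{secondderivativeFhat} by $\Fh'(x)^2=\prod_{l=0}^{q-1}f_0'(F^lx)^2$. The $j$-th term becomes
\[
\frac{f_0''(F^jx)}{f_0'(F^jx)^2}\cdot\prod_{l=j+1}^{q-1}\frac{1}{f_0'(F^lx)}.
\]
By (A6), the first factor is bounded by $C_A$. So
\[
\frac{\Fh''}{(\Fh')^2}\le C_A\sum_{j=0}^{q-1}\prod_{l=j+1}^{q-1}\frac{1}{f_0'(F^lx)}=C_A\sum_{j=0}^{q-1}(g_0^{\,q-1-j})'(\Fh(x)),
\]
where the last equality follows from the chain rule applied to inverse branches, since $F^l x=g_0^{q-l}(\Fh x)$ for $l\ge1$.

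The main obstacle is to bound $\sum_{n\ge0}(g_0^n)'(z)$ uniformly for $z\in(g_0(1),1)$, because $g_0'\to1$ near $0$ rules out a geometric bound. Here we use (B)(iii). Since $g_0$ is concave by (A4), $g_0'$ is decreasing, and with $z>g_0(1)$ and the monotonicity of $g_0^j$ we get $(g_0^n)'(z)=\prod_{j=0}^{n-1}g_0'(g_0^jz)\le\prod_{j=1}^{n}g_0'(g_0^j(1))$. This is comparable to $C_I^{(2)}\omega_{n+1}^{(2)}$ up to the bounded factor $g_0'(g_0^n(1))\le1$. The sum $\sum_n\omega_n^{(2)}$ converges because $\omega_n<1$ and (B)(i) gives $\sum\omega_n^{1-\sigma}<\infty$. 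Together with (A6), this yields $\widehat C_A<\infty$ uniformly in $s,q$.

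For \textbf{(4)}, we use the mean value theorem on $\ln\Fh'$ along the segment between $x$ and $y$, which lies inside $J_s^q$: $|\ln\Fh'(x)-\ln\Fh'(y)|\le\int|\Fh''/\Fh'|=\int(\Fh''/(\Fh')^2)\,\Fh'\le\widehat C_A|\Fh(x)-\Fh(y)|$, using (3) and $\Fh''>0$.
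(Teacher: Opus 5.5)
Your proposal is correct and follows essentially the same route as the paper. It uses the same composition for (1), the same bound $\Fh'\ge f_0'(x)>f_0'(g_0(1))$ for (2), the same count $s+q-1$ for (5), and in (3) your rewriting of the tail product as $(g_0^{\,q-1-j})'(\Fh(x))$ with $\Fh(x)>g_0(1)$ is exactly the paper's comparison with the left endpoint $c_s^q$, giving the same bound $\prod_{m=1}^{q-1-j}g_0'(g_0^m(1))$ controlled by (B)(iii); you additionally spell out the integral argument for (4), which the paper only asserts as a consequence of (3).
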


\begin{proof}
Let us prove each item separately. 
First, as $\Fh|_{J_s^q} = f_0^{q-1}\circ f_1^{s-1} \circ f_0|_{J_s^q}$ is a $C^2$ diffeomorphism between $J_s^q$ and its image, 
one needs only to check that $\Fh (J_s^q) = \left( g_0 (1) , 1 \right)$. We know that $J_s^q = \left( g_0\circ g_1^{s-1} \circ g_0^q(1) ,  g_0\circ g_1^{s-1} \circ g_0^{q-1}(1) \right)$. Hence:
\begin{align*}
\Fh (J_s^q ) &= f_0^{q-1}\circ f_1^{s-1} \circ f_0 (J_s^q) = f_0^{q-1}\circ f_1^{s-1} \left( g_1^{s-1} \circ g_0^q(1) ,   g_1^{s-1} \circ g_0^{q-1}(1) \right) = \\
&= f_0^{q-1} \left( g_0^q (1) , g_0^{q-1}(1) \right)= \left( g_0 (1) , 1 \right).
\end{align*}
This proves item (1). 

Second, if $x \in J^q$ then $x > g_0(1)$. Then, combining equation \eqref{derivativeFhat} with the fact that $f_0' >1$ and $f_0'$ is increasing one obtains
\begin{align*}
\Fh'(x) = \prod\limits_{l=0}^{q-1}f_0'(F^l(x)) \ge f_0'(x) > f_0'(g_0(1))>1,
\end{align*}
which proves item (2). 

We proceed to prove item (3).
Fix $x \in J_s^q$. Using the expressions \eqref{derivativeFhat} and \eqref{secondderivativeFhat},
\begin{align*}
\begin{array}{l l}
\dfrac{\widehat{F}''(x)}{\widehat{F}'(x)^2} & = \dfrac{\sum\limits_{j=0}^{q-1} \left[\prod\limits_{l=0}^{j-1} f_0'(F^l (x))^2 \right] \cdot f_0''(F^j(x)) \cdot \left[ \prod\limits_{l=j+1}^{q-1} f_0'(F^l(x))\right]}{\left[ \prod\limits_{l=0}^{q-1} f_0'(F^l (x)) \right]^2} \\
\\
\ & = \sum\limits_{j=0}^{q-1} \left[\dfrac{ \left[\prod\limits_{l=0}^{j-1} f_0'(F^l (x))^2 \right] \cdot f_0''(F^j(x)) \cdot \left[ \prod\limits_{l=j+1}^{q-1} f_0'(F^l(x))\right]}{\left[ \prod\limits_{l=0}^{j-1} f_0'(F^l (x))^2 \right] \cdot f_0'(F^j (x))^2 \cdot \prod\limits_{l=j+1}^{q-1} f_0'(F^l(x))^2}\right]\\
\\
\ & =\sum\limits_{j=0}^{q-1} \dfrac{f_0''(F^j(x))}{f_0'(F^j(x))^2} \cdot \prod\limits_{l = j+1}^{q-1}\dfrac{1}{f_0'(F^l(x))}.
\end{array}
\end{align*}
As $f_0$ satisfies Adler's property   (recall Assumption {\bf (A6)}), there exists $C_A>0$ so that  $\dfrac{f_0''(F^j(x))}{f_0'(F^j(x))^2} \le C_A < +\infty$  for every  $0\le j\le q-1$.
This, combined with the fact that  $g_0'$ is strictly decreasing and $x \in J_s^q = (c_s^q , d_s^q)$ ensures that
\begin{align*}
\dfrac{\Fh''(x)}{\Fh'(x)^2} \le C_A \sum\limits_{j=0}^{q-1} \prod\limits_{l=j+1}^{q-1} \dfrac{1}{f_0'(F^l(x))} = C_A \sum\limits_{j=0}^{q-1} \prod\limits_{l=j+1}^{q-1} g_0'(f_0 \circ F^l(x)) < C_A \sum\limits_{j=0}^{q-1} \prod\limits_{l=j+1}^{q-1} g_0'(f_0 \circ F^l(c_s^q)).
\end{align*}
Since $g_0=f_0^{-1}$ and $g_1=f_1^{-1}$,
one has that 
\begin{align*}
f_0 \circ F^l(c_s^q) & = f_0 \circ f_0^{l-1} \circ f_1^{s-1}\circ f_0 \circ g_0 \circ g_1^{s-1} \circ g_0^{q}(1) \\
& = f_0^l \circ f_1^{s-1}\circ g_1^{s-1} \circ g_0^q(1)=
  f_0^l \circ g_0^q (1) = g_0^{q-l}(1).
\end{align*}
for each $j+1\le l \le q-1$.
Hence, using assumption \textbf{(B)},
\begin{align*}
\dfrac{\Fh''(x)}{\Fh'(x)^2} & \le C_A \sum\limits_{j=0}^{q-1} \prod\limits_{l=j+1}^{q-1} g_0' (g_0^{q-l}(1))= C_A\sum\limits_{j=1}^q \prod\limits_{l=1}^{j-1}g_0'(g_0^l(1))
\\
& \le 
 C_A C_I^{(2)} \sum\limits_{j=1}^q \omega_j^{(2)} < C_A C_I^{(2)} \sum\limits_{j=1}^{+\infty} \omega_j^{(2)},
\end{align*}
which is summable because $\prod\limits_{l=1}^{j-1} g_0'(g_0^l (1)) < C_I^{(2)} \cdot \omega_j^{(2)}$, and the constants $\omega_j^{(2)} \in (0,1)$ satisfy
$\sum\limits_{j=1}^{+\infty} \left[ \omega_j^{(2)} \right]^{1-\sigma_2} < +\infty$.
Since the estimate does not depend on $s,q$ and $x$, the proof  of item~(3) is finished.
Item (4) is a direct consequence of item (3) as Adler's property of $\Fh$ implies the bounded distortion property, 
similarly as for $f_0$. 

We are left to prove that 
$\Fh$ can be written as $f^{\tau^k}$, with  $\tau^k=\sum_{j=0}^{k-1} \tau \circ F^j$.
Denote by $\theta: \left(g_0(1),1 \right) \rightarrow \N^*$ the first return time of a point in $\left(g_0(1),1 \right)$
under the action of $f$. By construction $\theta|_{J_s^q} \equiv s+q-1$, as for any point $x\in J_s^q$, the iterate $f^s(x)$ is first at which the point returns to the interval $(0,1)$, and exactly other $q-1$ compositions by $F$ in order to arrive in $\left(g_0(1),1 \right)$. This finishes the proof  of the proposition.
\end{proof}

\begin{corollary}
\label{cor:prob}
The Gibbs-Markov map $\Fh$ has a unique probability measure $\widehat{\mu}$, which is exact and  absolutely continuous with respect to the Lebesgue measure. Moreover, $\frac{d\hat{\mu}}{d\Leb}$ is a piecewise $C^1$ bounded function and  $\widehat{\mu} = \dfrac{1}{\pi_* m ( \Delta )}\cdot (\pi_* m)|_{\Delta}$.
\end{corollary}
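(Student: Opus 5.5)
The plan is to combine the standard theory of full-branch Gibbs--Markov interval maps with the structure of $\Fh$ as a first return map of $f$. By Proposition~\ref{prop:MarkovGibbs}, $\Fh$ is a full-branch $C^2$ map on the countable partition $\{J_s^q\}$ of the bounded interval $\Delta$. It is uniformly expanding, with $\Fh' > f_0'(g_0(1))>1$, and it satisfies Adler's condition with constant $\widehat{C}_A$. These are exactly the hypotheses of the folklore theorem (see e.g. \cite{aaronson, alves}) giving an exact absolutely continuous invariant probability.

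I would prove this directly via the transfer operator
\[
\mathcal{L}h(x)=\sum_{s,q}\frac{h(\psi_{s,q}(x))}{\Fh'(\psi_{s,q}(x))},
\]
where $\psi_{s,q}=(\Fh|_{J_s^q})^{-1}$. Item (4) of Proposition~\ref{prop:MarkovGibbs} gives $|\psi_{s,q}''|/\psi_{s,q}'\le \widehat{C}_A$, because $\psi''/\psi' = -(\Fh''/\Fh'^2)\circ\psi$. From this, the cone of positive $C^1$ densities $h$ with $|h'|\le K h$ is invariant under $\mathcal{L}$ for $K$ large, using the contraction factor $1/f_0'(g_0(1))$. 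Densities in this cone have $\log h$ Lipschitz on the bounded interval $\Delta$, so the normalized iterates $\frac1n\sum_{k<n}\mathcal{L}^k 1$ have a subsequence converging in $C^0$ (Arzel\`a--Ascoli) to a fixed density $\hat h$. The cone bound passes to the limit, so $\hat h$ is Lipschitz and bounded above and below. The same bound applied to the series $\sum \psi_{s,q}'\,h\circ\psi_{s,q}$, differentiated termwise, shows $\hat h$ is $C^1$ on $\Delta$; convergence of the derivative series follows from $\sum\psi_{s,q}'=\sum|J_s^q|$-type bounds and bounded distortion.

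For exactness, the full-branch property plus bounded distortion gives the standard Lebesgue density argument. If $A$ lies in the tail $\sigma$-algebra with $\mathrm{Leb}(A)>0$, choose cylinders $C_n$ of generation $n$ where $A$ has relative density tending to $1$. Then $\Fh^n(C_n)=\Delta$, and the distortion bound (uniform in $n$ thanks to the geometric contraction of cylinder lengths) gives $\Fh^n(A\cap C_n)$ of relative measure close to $1$ in $\Delta$. Hence $A$ has full measure. Exactness implies ergodicity, and ergodicity implies uniqueness among absolutely continuous invariant probabilities.

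The main step, and the only one using the specific construction, is the identification $\widehat\mu=(\pi_*m)|_\Delta/\pi_*m(\Delta)$. Since $f\circ\pi=\pi\circ\PPP$, $\pi_*m$ is $f$-invariant. By item (5) of Proposition~\ref{prop:MarkovGibbs}, $\Fh$ is the first return map of $f$ to $\Delta=(g_0(1),1)$, with return time $\theta=s+q-1$ on $J_s^q$. The first return map of a measure-preserving map preserves the restriction of the invariant measure (Kac's lemma; this holds even when $\pi_*m$ is infinite, by the standard argument using conservativity, which follows from the ergodicity in \textbf{(C)}). By \textbf{(C)}, $\pi_*m(\Delta)=m((g_0(1),1)\times\R^+)<\infty$, so the normalization is a probability. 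It is absolutely continuous because $m\ll\Leb$. By uniqueness it equals $\widehat\mu$, so its density coincides with $\hat h$ and inherits the regularity proved above. I expect the delicate point to be justifying invariance of the restriction under the first return map when $\pi_*m$ may be infinite. I would handle it by checking $\pi_*m(\Fh^{-1}B)=\pi_*m(B)$ for $B\subset\Delta$, decomposing $\Fh^{-1}B$ according to the return time and using $f$-invariance on the sets $\{\theta>k\}\cap f^{-k}B$, all of which have finite measure.
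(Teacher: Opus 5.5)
Your proposal is correct and follows the paper's route. The paper cites the Gibbs--Markov folklore theorem (\cite{alves}, Theorem~3.13) for existence, uniqueness, exactness and regularity of $\widehat\mu$, and \cite[Proposition~4]{zweimuller} for the $\Fh$-invariance of $(\pi_*m)|_{\Delta}$, then concludes by uniqueness; you re-derive both cited results yourself.

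One touch-up is needed in your regularity sketch, plus an optional simplification:
\begin{enumerate}
\item One application of $\mathcal L$ to the Lipschitz $\hat h$ still produces the terms $\hat h'\circ\psi_{s,q}$, so it does not give continuity of $\hat h'$. Instead write $\hat h=\mathcal L^n\hat h$ and let the factor $f_0'(g_0(1))^{-n}$ kill those terms.
\item In the return-map step, the explicit finite return time $\theta=s+q-1$ on $\Delta$ already makes the telescoping remainder vanish, so you do not need conservativity.
\end{enumerate}
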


\begin{proof}
The existence, uniqueness and exactness of $\widehat{\mu} \ll \Leb$ and regularity of its density are well-known  (see e.g. \cite{alves}, Theorem 3.13).
Now, since $f$ preserves $\pi_* m$, and $\pi_* m ( \Delta )$ is finite, then $\Fh$ preserves $(\pi_*m)|_{\Delta}$ (see \cite[Proposition 4]{zweimuller}).
On the other hand, $\widehat{\mu}$ is the unique invariant measure for $\Fh$ which is absolutely continuous with respect to Lebesgue, meaning that $\widehat{\mu} = \dfrac{1}{\pi_* m ( \Delta )}\cdot (\pi_* m)|_{\Delta}$.
\end{proof}

\subsection{Inducing scheme for $\mathcal P$} \label{sec:secondcomponent}

Let us now extend the inducing procedure to the map $\PPP$: we obtain a skew-product on $\Delta \times \R^+$, with $\Fh$ acting on the first component, while the second one is ruled by non-autonomous compositions of maps $g_x$. The new system does not fit into the standard assumptions, but we show how to work around this obstacle. We also find an invariant ergodic probability measure for the new map, and show that it admits a regular disintegration along the unstable fibres.
\color{black}
For every $n \ge 1$, let us define 
$$
g_x^n= g_{f^{n-1}(x)} \circ g_{f^{n-2}(x)} \circ \dots \circ g_{f(x)} \circ g_x,
$$ when it is well-defined.
We induce $\PPP$ on $\big[ \bigcup\limits_{s \ge 1} I_s \big] \times \mathbb{R}^+$ using the inducing time $\tau$ (which depends only on the first coordinate) and define
$$
P(x,y)= \PPP^{\tau(x)}(x,y)= \left( F(x) , G_x (y) \right)
$$
where $G_x(y)= g_x^{\tau (x)}(y)$.
In order to recover the second inducing on first coordinate we induce $P$ on $\left( g_0(1) , 1 \right) \times \R^+$ and obtain the map
\begin{equation}
\label{eq:doubleinduceP}
    \Ph(x,y):= P^{\kappa(x)}(x,y) = \left(\Fh(x), \Gh_x(y)\right)
\end{equation}
where $\Gh_x (y) = G_x^{\kappa(x)}(y)$ and  $G_x^n(y)= G_{F^{n-1}(x)} \circ \dots \circ G_{F(x)} \circ G_x (y)$, for every $n \ge 1$. We proceed to describe the dynamics of the fiber compositions.

\begin{lemma}
\label{lem:G}
Consider $(x,y) \in \Delta \times \R^+$. Then:
\begin{itemize}
\item[$\boldsymbol{(1)}$] For each $1\le j \le \kappa(x)$ one has
\begin{align}
G_x^j(y)& = g_0^{j-1}\circ g_1^{\tau (x)-1} \circ g_0 (y); \label{powerofG}
\end{align}
\item[$\boldsymbol{(2)}$] $\widehat{G}_x(\R^+) = \left( g_0^{\kappa (x) -1} (\tau(x)-1), g_0^{\kappa (x) -1} (\tau (x)) \right)$;
\item[$\boldsymbol{(3)}$] $\Gh_x$ maps the vertical fibre $\lbrace x \rbrace \times \R^+$ onto an interval of length at most $1$.
\end{itemize}
\end{lemma}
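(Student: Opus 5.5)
Fix $(x,y)\in\Delta\times\R^+$ and write $s=\tau(x)\ge 2$ and $q=\kappa(x)\ge 1$, so that $x\in J_s^q$ by \eqref{eq:collectiontauk}. The plan is to first compute $G_x$ explicitly, then iterate it along the $F$-orbit of $x$ using the location of that orbit from the proof of Lemma~\ref{lem:J}, and finally read off (2) and (3) from monotonicity and the contraction of $g_0$.

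For item (1), start with $j=1$. By definition $G_x(y)=g_x^{\tau(x)}(y)=g_{f^{s-1}(x)}\circ\dots\circ g_{f(x)}\circ g_x(y)$. Since $x<1$, the innermost map is $g_0$. By the proof of Lemma~\ref{lem:tau}, $f^k(x)=f_1^{k-1}(f_0(x))\in(s-k,s-k+1)$ for $1\le k\le s-1$, so these points are $>1$ and the corresponding maps are $g_1$. This gives $G_x(y)=g_1^{s-1}\circ g_0(y)$.

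Next, by Lemma~\ref{lem:J} and its proof, for $1\le l\le q-1$ the point $F^l(x)$ lies in $(g_0^{q-l+1}(1),g_0^{q-l}(1))\subset I_1$. There $\tau\equiv 1$, so $G_{F^l(x)}=g_0$. Composing gives $G_x^j(y)=g_0^{j-1}\circ g_1^{s-1}\circ g_0(y)$ for $1\le j\le q$, which is \eqref{powerofG}. The one point needing care is that $F^l(x)$ indeed lies in $(0,1)$ with $\tau=1$ for $l\le q-1$; this is exactly what the interval bookkeeping in Lemma~\ref{lem:J} provides.

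For item (2), take $j=q$, so $\Gh_x=g_0^{q-1}\circ g_1^{s-1}\circ g_0$. We have $g_0(\R^+)=(0,1)$, then $g_1^{s-1}$ maps $(0,1)$ onto $(s-1,s)$, and $g_0^{q-1}$ is increasing and continuous. Hence $\Gh_x(\R^+)=(g_0^{q-1}(s-1),g_0^{q-1}(s))$. For item (3), the mean value theorem together with $0<g_0'<1$ shows that $g_0^{q-1}$ is a contraction, so this image interval has length at most $s-(s-1)=1$. The only potential obstacle is the orbit bookkeeping in item (1); the rest is direct.
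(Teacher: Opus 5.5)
Your proof is correct and follows essentially the same route as the paper. Both arguments compute $G_x=g_1^{\tau(x)-1}\circ g_0$ from the definition, then use that $F(x),\dots,F^{\kappa(x)-1}(x)\in I_1$ (where $\tau\equiv 1$) to obtain \eqref{powerofG}, and read off (2) and (3) from monotonicity and $0<g_0'<1$. You spell out the orbit bookkeeping and the last two items in more detail than the paper, which calls them immediate.
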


\begin{proof}
Let us prove item (1). If $j=1$, this is an immediate consequence of the definition of $G_x$.
If $2 \le j \le \kappa(x)$, then $F(x), \dots , F^{j-1}(x) \in I_1$, 
hence
\begin{align*}
G_x^j(y)& =G_{F^{j-1}(x)} \circ \dots \circ G_{F(x)} \circ G_x (y) \\
& = g_1^{\tau(F^{j-1}(x))-1} \circ g_0 \circ \dots \circ g_1^{\tau(F(x))-1} \circ g_0 \circ g_1^{\tau (x)-1} \circ g_0 (y)\\
& =g_0^{j-1}\circ g_1^{\tau (x)-1} \circ g_0 (y).
\end{align*}
Items (2) and (3) are direct consequences of the latter.
\end{proof}

Lemma \ref{lem:G} implies that $\Gh_x(y)$ is constant along the $x$ component
on each component $J_s^q \times \R^+$.
Thus, for simplicity, we express the derivative along $y$ as $\Gh'_x(y) = \partial_y \Gh_x(y)$.
Observe that $\Gh'_x(y) < 1$ for every $(x,y) \in \Delta \times \R^+$. 
However, contrary to the standard assumptions of \cite{avilaGouezelyoccoz}, the contracting behavior is not uniform, because in case $x\in J_s^q$ with $q=1$,
\begin{align*}
\lim\limits_{y \to 0^+} \Gh'_x(y)= \lim\limits_{y \to 0^+} \dfrac{d}{dy} \left[ g_1^{s-1} \circ g_0 \right] (y) = \lim\limits_{y \to 0^+} g_0'(y)=1.
\end{align*}
A way to overcome this obstacle is to accelerate the dynamics by considering the second iterate of $\Ph$, which corresponds to a third inducing scheme of constant time.
In order to simplify the notations, let us consider $s_0,s_1 \ge 1$, $s_0,s_1 \ge 2$, and $q_0,q_1 \ge 1$, and define $J_{s_0s_1}^{q_0q_1} = J_{s_0}^{q_0} \cap \Fh^{-1} \left( J_{s_1}^{q_1} \right)$ and $\Dt = \bigcup\limits_{\substack{s_0,s_1 \ge 2\\q_0,q_1 \ge 1}} J_{s_0s_1}^{q_0q_1}$. Then, for every $(x,y) \in \Dt \times \R^+$, take
\begin{equation}
    \label{eq:thirdinduce}
    \Pt (x,y) = \Ph^2 (x,y) = \left( \Ft (x), \Gt_x(y) \right)
\end{equation}
where $\Ft (x) = \Fh^2(x)$ and $\Gt_x(y) = \Gh_{\Fh (x)} \circ \Gh_x(y)$. 
It is straightforward to check that the Adler's property of $\Fh$ implies the Adler's property for $\Ft$, i.e.
\begin{align*}
\widetilde{C}_A = \sup\limits_{\substack{s_0,s_1 \ge 2\\q_0,q_1 \ge 1}} \sup\limits_{x \in J_{s_0s_1}^{q_0q_1}} \dfrac{\Ft''(x)}{(\Ft'(x))^2} < +\infty. \label{adlerFtilde}
\end{align*}
Consequently, $\Ft$ satisfies the bounded distortion property with constant $\widetilde{C}_A$.
Moreover, $\Gt_x^n(y) = \Gh_x^{2n}(y)$, for any $n \ge 1$, when it is well-defined,
and the derivative  $\Gt'$ along the $y$ component is piecewise constant in $x$, since the same holds for $\Gh$.

\begin{proposition}
\label{prop:unifcontraction}
For a fixed $x \in \widetilde{\Delta}$, the map $\Gt_x$ is a uniform contraction on $\mathbb{R}^+$. In particular, 
$ 
\widetilde{G}_x'(y) \le g_0'(1) < 1
$ for every $y>0$.
\end{proposition}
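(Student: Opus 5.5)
The plan is to compute $\Gt_x'$ explicitly via the chain rule, using the formula from Lemma~\ref{lem:G}. Fix $x\in J_{s_0s_1}^{q_0q_1}\subset\Dt$ and write $s_i,q_i$ for the indices, so that $\Fh(x)\in J_{s_1}^{q_1}$. By Lemma~\ref{lem:G}(1) with $j=\kappa$,
\begin{align*}
\Gh_x = g_0^{q_0-1}\circ g_1^{s_0-1}\circ g_0, \qquad \Gh_{\Fh(x)} = g_0^{q_1-1}\circ g_1^{s_1-1}\circ g_0 .
\end{align*}
Hence $\Gt_x = g_0^{q_1-1}\circ g_1^{s_1-1}\circ g_0\circ g_0^{q_0-1}\circ g_1^{s_0-1}\circ g_0$. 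Since $g_1'\equiv 1$ and $0<g_0'<1$ everywhere, every factor in the chain rule is at most $1$. It therefore suffices to isolate a single factor of the form $g_0'(z)$ with $z\ge 1$, and to bound it by $g_0'(1)$.

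The factor to isolate is the middle $g_0$, namely the first factor of $\Gh_{\Fh(x)}$. It is evaluated at the point $w=\Gh_x(y)$. By Lemma~\ref{lem:G}(2), $w\in\bigl(g_0^{q_0-1}(s_0-1),\,g_0^{q_0-1}(s_0)\bigr)$.

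\begin{itemize}
\item If $q_0=1$, then $w\in(s_0-1,s_0)$ with $s_0\ge 2$, so $w>1$. Since $g_0'$ is decreasing (by (A4), $f_0'$ is increasing), we get $g_0'(w)<g_0'(1)$.
\item If $q_0\ge 2$, then $w$ may be small, so the middle factor is not useful. Instead we use the factors coming from $\Gh_x$ itself. The derivative of $g_0^{q_0-1}$ is evaluated along $g_0^{k}(u)$ for $k=0,\dots,q_0-2$, where $u=g_1^{s_0-1}(g_0(y))=g_0(y)+s_0-1\ge s_0-1\ge 1$. The $k=0$ factor is $g_0'(u)\le g_0'(1)$.
\end{itemize}

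The only care needed is at the endpoint $u$ close to $1$ when $s_0=2$. There $u>1$ strictly, since $g_0(y)>0$, so the strict inequality persists; in any case the statement only claims $\le g_0'(1)$.

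Combining the two cases: in each, one chain-rule factor is at most $g_0'(1)<1$ and all remaining factors are at most $1$. Hence $\Gt_x'(y)\le g_0'(1)$ for all $y>0$, uniformly in $x$. Uniform contraction on $\R^+$ then follows from the mean value theorem.

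I expect no serious obstacle. The point is to recognize that a single $\Ph$ may lack contraction only when $q=1$ and $y\to0$, and that the second iterate always passes through a point $\ge 1$ before an application of $g_0$. This is exactly why the split is on $q_0$.
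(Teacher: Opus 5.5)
Your proposal is correct and follows essentially the same route as the paper: expand $\widetilde{G}_x'(y)$ by the chain rule, bound every factor by $1$, and isolate one factor $g_0'$ evaluated at a point $g_0(y)+s_0-1>1$, which is below $g_0'(1)$ because $g_0'$ is decreasing. The only difference is cosmetic: the paper splits into $q_0=q_1=1$ versus ``$q_0\neq 1$ or $q_1\neq 1$'', while your split on $q_0$ alone shows that the single factor $g_0'(G_x(y))$ works in every case.
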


\begin{proof}
Given $(x,y) \in J_{s_0s_1}^{q_0q_1} \times \R^+$, there are two cases to consider.  Assume first $q_0=q_1=1$.
From the definitions of $\Gt, \Gh,$ and $\Fh$, we have that $\Gt_x = \Gh_{\Fh(x)}\circ\Gh_x=G_{F(x)}\circ G_x$. Thus, equation \eqref{powerofG} implies that $G_x(y) = g_0(y)+s_0-1 > 1$. Therefore, since $g_0'$ is decreasing, we conclude that $g_0'(G_x(y)) < g_0'(1)$, so $\Gt_x'(y) =  g_0' \left( G_x(y) \right) \cdot g_0'(y) < g_0'(1)$.

Assume now that either $q_0\neq 1$ or $q_1 \neq 1$. Combining
$\Gt_x(y) = \Gh_{\Fh (x)} \circ \Gh_x(y)$
with the chain rule
$ 
\Gh_x'(y) =  \prod\limits_{i=0}^{q-1}g_0'\left( G_x^i(y)\right)
$ 
one obtains
\begin{align*}
\Gt_x'(y) = \left[ \prod\limits_{j=0}^{q_1-1}g_0'\left( G_{\Fh (x)}^j (\Gh_x(y))\right) \right] \cdot \prod\limits_{i=0}^{q_0-1} g_0' \left( G_x^i (y) \right).
\end{align*}
Since either $q_0$ or $q_1$ are not $1$, at least one of the terms in the previous product has to differ from $1$, in the product above there is the appearance of at least one term of the form 
\begin{align*}
g_0'\left (G_{\Fh (x)} (Gh_x (y)) \right) \ \ \ \ \ \text{or} \ \ \ \ \ g_0'(G_x(y)).
\end{align*}
Proceeding as in the first case, each of this two terms is strictly smaller than $g_0'(1)$.
Estimating all the other terms by $1$ we conclude the proof  of the proposition.
\end{proof}

\subsection{$C^1_b$ regular disintegration} \label{sec:Disintegration}

In this subsection we collect some information concerning the disintegration of the $\widetilde P$-invariant probability measure.  
In order to state the main proposition below let us introduce the notation $C_b^1\left(\Dt \times \R^+\right)$ as the set of functions $u : \Dt \times \R^+ \rightarrow \R$ which are bounded, continuously differentiable and $\sup\limits_{(x,y) \in \Dt \times \R^+} \| Du (x,y) \| < +\infty$.

\begin{proposition}
\label{prop:disintegrazione}
The following properties hold:
\begin{enumerate}
\item there exists a unique $\Pt$-invariant probability measure $\nu$ on $\Dt \times \R^+$ which is ergodic and  $\pi_*\nu = \widehat{\mu}$.\\
\item there exists a family $\lbrace \nu_x \rbrace_{x \in \Dt}$ of probability measures on $\Dt \times \R^+$ which is a $C^1$ disintegration of $\nu$ over $\Dt$, that is:
\begin{itemize}
\item[(i)] $\nu_x$ is supported on $\pi^{-1}(x)$, for every $x \in \Dt$.
\item[(ii)] given any uniformly continuous $v \in C_b \left(\Dt \times \R^+\right)$, consider the map $\overline{v}: \Dt \rightarrow  \mathbb{R}$ defined for every $x \in \Dt$ by $\overline{v}(x)=\int\limits_{\Dt \times \R^+} v \, d\nu_x$. Then, $\overline{v} \in L^{\infty}_{\widehat{\mu}} \left( \Dt \right)$.
\item[(iii)] for any uniformly continuous $v \in C_b \left(\Dt \times \R^+\right)$, we have that
\begin{align*}
\int\limits_{\Dt \times \R^+} v(x,y) \, d\nu(x,y) = \int\limits_{\Dt} \overline{v}(x) \, d\widehat{\mu}(x).
\end{align*}
\item[(iv)] there exists $C_R>0$ such that, if $v \in C_b^1\left(\Dt \times \R^+\right)$ has uniformly continuous derivatives, then $\overline{v} \in C_b^1\left( \Dt \right)$ and
\begin{align*}
\| D\overline{v}(x) \|_{C_b^1 \left( \Dt \right)} \le C_R \sup\limits_{ y \in \R^+}|v(x,y)| + C_R \sup\limits_{y \in \R^+} |Dv(x,y)|.
\end{align*}
\end{itemize}
\end{enumerate}
\end{proposition}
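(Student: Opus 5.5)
We plan to build $\nu$ by the classical construction for skew-products with uniformly contracting fibres. For $x\in\Dt$ and $n\ge1$ we use the inverse branches of $\Ft^n$ to write $x=\Ft^n(x_n)$, where $x_n$ is the preimage lying in a cylinder $\omega$ of length $n$. Given $v\in C_b(\Dt\times\R^+)$ uniformly continuous, fix a reference height $y_0>0$ (say $y_0=1$) and set
\begin{align*}
\overline{v}_n(x)=\sum_{\omega} \frac{1}{(\Ft^n)'(h_\omega x)}\,\frac{\rho_\mu(h_\omega x)}{\rho_\mu(x)}\, v\big(x,\Gt^n_{h_\omega x}(y_0)\big),
\end{align*}
where $h_\omega$ ranges over inverse branches and $\rho_\mu=d\widehat{\mu}/d\Leb$ (restricted to $\Dt$; recall Corollary~\ref{cor:prob}, and note $\widehat\mu$ is also $\Ft$-invariant since $\Ft=\Fh^2$). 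Equivalently $\overline v_n = (\mathcal L^n v_n)(x)$ with $\mathcal L$ the normalized transfer operator of $\Ft$ and $v_n(z)=v(\Ft^n z,\Gt^n_z(y_0))$. By Proposition~\ref{prop:unifcontraction} together with Lemma~\ref{lem:G}(3), we have $|\Gt^{n+k}_{z}(y_0)-\Gt^n_{\Ft^k z}(\Gt^k_z y_0)|\le g_0'(1)^{n}\cdot C$ once $k\ge1$: after one step the fibre image has length at most $1$, and afterwards the contraction rate is $g_0'(1)$. This is the key point that circumvents the unbounded second coordinate. Uniform continuity of $v$ then makes $\overline v_n(x)$ Cauchy uniformly in $x$, and we define $\nu_x$ as the limit functional, which is positive, normalized and hence a probability on $\{x\}\times\R^+$ (tightness follows since all mass lies in a bounded interval after one iterate). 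Then $\nu:=\int\nu_x\,d\widehat\mu$ defines (iii), and (i) and (ii) are immediate with $|\overline v|\le\|v\|_\infty$.

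Invariance is checked via the identity $\overline{v\circ\Pt}=\overline v\circ\Ft$ in the sense of $\mathcal L(\overline{v\circ \Pt})=\overline v$, which holds at the level of the approximants up to errors $O(g_0'(1)^n)$; integrating against $\widehat\mu$ and using $\mathcal L^*\widehat\mu=\widehat\mu$ gives $\Pt_*\nu=\nu$, while $\pi_*\nu=\widehat\mu$ holds by construction. For uniqueness and ergodicity, any invariant $\nu'$ with $\pi_*\nu'=\widehat\mu$ satisfies $\int v\,d\nu'=\int v\circ\Pt^n\,d\nu'$, and since $v\circ\Pt^n$ is nearly constant on fibres (by the fibre contraction) it equals $\int \overline v\,d\widehat\mu+o(1)$. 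Thus $\nu'=\nu$, and ergodicity (indeed mixing) follows from the exactness of $\widehat\mu$ in Corollary~\ref{cor:prob}. Since $\widehat\mu$ projected from $m$ is ergodic, one could alternatively identify $\nu$ with the normalized restriction of $m$; we expect to remark this rather than rely on it.

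The main obstacle is (iv). We would differentiate $\overline v_n$ term by term. Derivatives hitting the weights $1/(\Ft^n)'\circ h_\omega$ and $\rho_\mu\circ h_\omega$ produce $\frac{(\Ft^n)''}{((\Ft^n)')^2}$-type factors, which are bounded uniformly in $n$ and $\omega$ by the Adler property $\widetilde C_A$ (iterated through the standard telescoping with a geometric factor coming from uniform expansion, Proposition~\ref{prop:MarkovGibbs}(2)). These contribute at most $C\sup|v|$. The derivative in $x$ of $v(x,\Gt^n_{h_\omega x}(y_0))$ has a first term $\partial_x v$, bounded by $\sup|Dv|$. Its second term would involve $\partial_x$ of $\Gt^n_{h_\omega x}$, but this vanishes because $\Gt_z$ is piecewise constant in $z$ on the cylinders (Lemma~\ref{lem:G}), and $h_\omega x$ stays in one cylinder. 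Hence $|\overline v_n'|\le C_R(\sup|v|+\sup|Dv|)$ uniformly in $n$. To pass to the limit we need $\overline v_n'$ to converge uniformly, and this is where uniform continuity of $Dv$ is used: the differences of $\partial_xv$ evaluated at nearby heights are controlled as in the Cauchy argument above, and the weight derivatives converge by the usual Lasota--Yorke/cone argument for $\mathcal L$. We then conclude $\overline v\in C^1_b(\Dt)$ with the stated bound.
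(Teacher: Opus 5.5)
Your overall route is the Ara\'ujo--Pacifico--Pujals--Viana / Butterley--Melbourne construction that the paper invokes, and most of it goes through. That route is: fibre measures as limits of $\mathcal L^n$ applied to $z\mapsto v(\Ft^n z,\Gt^n_z(y_0))$; a uniform Cauchy property from fibre contraction; invariance and uniqueness by comparison with $\mathcal L^n$; and the $C^1$ bound from Adler's property plus the fact that $\Gt^n_{h_\omega x}$ does not depend on $x$. The genuine gap is the passage from the limit functional to a \emph{probability measure} on the non-compact fibre. Your justification, ``all mass lies in a bounded interval after one iterate'', is false. Lemma~\ref{lem:G}(3) bounds only the \emph{length} of each one-step image, while its position depends on the branch. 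If $w\in J^{q_0q_1}_{s_0s_1}$ then $\Gt_w(\R^+)\subset\Gh_{J^{q_1}_{s_1}}(\R^+)=\bigl(g_0^{q_1-1}(s_1-1),g_0^{q_1-1}(s_1)\bigr)$. This is $(s_1-1,s_1)$ when $q_1=1$, and it approaches $0$ as $q_1\to\infty$. So for every $n$ and every $x$ the atoms $\Gt^n_{h_\omega x}(y_0)$ of $\overline v_n(x)$ are spread over all of $\R^+$. A positive normalized functional on bounded uniformly continuous functions of $y\in\R^+$ need not be countably additive: mass may escape to $y=+\infty$ or to $y=0$. This is exactly the point where, as the paper stresses, one has to work with compactly supported test functions and then show separately that no mass is lost.

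The missing ingredient is a uniform tightness estimate that comes from the weights, not from the geometry. Write $w_\omega(x)=\rho_\mu(h_\omega x)/\bigl(\rho_\mu(x)(\Ft^n)'(h_\omega x)\bigr)$. Since $\Gh_{J_s^q}(\R^+)\subset(0,1)$ for $q\ge2$, the inequality $\Gt^n_{h_\omega x}(y_0)>M\ge1$ forces $\Ft^{n-1}(h_\omega x)\in B_M:=\Dt\cap\Fh^{-1}\bigl(\bigcup_{s>M}J^1_s\bigr)$, which is a union of one-cylinders of $\Ft$. Take a cutoff $0\le\chi\le1$ supported in $\{y>M\}$. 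Then $\overline\chi_n(x)\le\mathcal L^n(\mathds{1}_{B_M}\circ\Ft^{n-1})(x)=\mathcal L(\mathds{1}_{B_M})(x)\le C\,\Leb(B_M)$, uniformly in $n$ and $x$, by bounded distortion of $\Ft$ and the two-sided bounds on $\rho_\mu$; and $\Leb(B_M)\to0$. In the same way, $\Gt^n_{h_\omega x}(y_0)<g_0^{Q}(1)$ forces $q_1>Q$, and the corresponding cylinders $\Dt\cap\Fh^{-1}\bigl(\bigcup_{q>Q}J^q\bigr)$ have Lebesgue measure tending to $0$. With this estimate, Riesz's theorem applied to $C_c$ test functions yields a genuine probability $\nu_x$ for every $x$, and the rest of your argument stands.

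Two minor points. First, your displayed contraction estimate is vacuous as written, since $\Gt^{n+k}_z=\Gt^n_{\Ft^kz}\circ\Gt^k_z$ makes the left side identically zero; what you actually use is $|\Gt^n_w(y)-\Gt^n_w(y_0)|\le g_0'(1)^{n-1}$ for all $y$. Second, uniform convergence of the weight-derivative term in (iv) follows directly from $\sum_{\omega'}w_{\omega'}'\equiv0$ and fibre contraction, so no Lasota--Yorke argument is needed.
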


Some comments are in order. 
Note that our base space is not compact nor bounded. Nevertheless, we can still apply the same procedure
as in the proof s in \cite{araujo_pacifico_pujals_viana}, the difference being that one needs to carefully work with compactly supported continuous functions in order to bypass the unboundedness of the stable fibres and obtain the desired disintegration of $\nu$. 
We omit the proof  of Proposition \ref{prop:disintegrazione}, since it mirrors the arguments outlined in \cite{araujo_pacifico_pujals_viana,butterleymelbourne} and refer the reader to \cite{Nicola} for a detailed argument.
Finally, we note that, considering the $\PPP$-invariant ergodic measure $m$ on $\R^+ \times \R^+$, since $\Pt$ is obtained by inducing $\PPP$ on $\Dt \times \R^+$ and $m\left( \Dt \times \R^+ \right) < +\infty$, then the probability measure $\dfrac{1}{m\left(\Dt \times \R^+ \right)}\cdot m|_{\Dt \times \R^+}$ is ergodic, preserved by $\Pt$, and projecs on $\widehat{\mu}$ via $\pi_*$. By its uniqueness one concludes that  $\nu = \dfrac{1}{m\left(\Dt \times \R^+ \right)}\cdot m|_{\Dt \times \R^+}$.

\begin{remark}
Observe that, as pointed out in \cite{butterleymelbourne} (Section $5$, Remark $12$), the $C^1$-regularity statement above is equivalent to the relative requirement in the standard assumptions.    
\end{remark}

\section{The roof function} \label{sec:roof function}

In this section we deal with the induced roof function obtained from the original roof function $\rho$ on $\Dt \times \R^+$ and show that it is bounded away from zero, satisfies the UNI condition and has exponential tails. 
Let us define the first and second induced roof functions as
\begin{equation}
\label{eq:roof function1}
    \text{$R(x,y) = \sum\limits_{j=0}^{\tau(x)-1} \rho \circ \PPP^j (x,y)$, for every $(x,y) \in  \bigcup\limits_{s \ge 1} I_s \times \R^+$}
\end{equation}
and 
\begin{equation}
\label{eq:roof function2}
    \text{$\Rh (x,y) = \sum\limits_{j=0}^{\kappa (x) -1} R \circ P^j (x,y)$, for every $(x,y) \in \Delta \times \R^+$,}
\end{equation}
respectively. These admit the following simple representation formulas:

\begin{proposition}
\label{prop:formuletetto}
The following hold:
\begin{align}
&R(x,y) = \rho_0 \ln \left[ \dfrac{x}{y}\cdot \dfrac{G_x(y)}{F(x)}\cdot \dfrac{F'(x)}{G_x'(y)} \right], \text{for every }(x,y) \in  \bigcup\limits_{s \ge 1} I_s \times \R^+. \label{inducedroof function1}\\
&\Rh(x,y) = \rho_0 \ln \left[ \dfrac{x}{y}\cdot \dfrac{\Gh_x(y)}{\Fh(x)}\cdot \dfrac{\Fh'(x)}{\Gh_x'(y)} \right], \text{for every (x,y) }\in \Delta \times \R^+. \label{inducedroof function2}
\end{align}
\end{proposition}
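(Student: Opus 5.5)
The plan is to use the telescoping structure hidden in the definition of $\rho$. Write, for a general skew-product $\mathcal{Q}(x,y)=(\phi(x),\gamma_x(y))$, the quantity
\begin{align*}
L_{\mathcal{Q}}(x,y):=\rho_0 \ln \left[ \dfrac{x}{y}\cdot \dfrac{\gamma_x(y)}{\phi(x)}\cdot \dfrac{\phi'(x)}{\partial_y\gamma_x(y)} \right].
\end{align*}
By \eqref{eq:defroof functiongen}, $\rho = L_{\PPP}$. The key observation is that $L$ is additive under composition along orbits: if $\mathcal{Q}_2\circ\mathcal{Q}_1$ is again a skew-product of the same form, then
\begin{align*}
L_{\mathcal{Q}_2\circ \mathcal{Q}_1}(x,y) = L_{\mathcal{Q}_1}(x,y) + L_{\mathcal{Q}_2}(\mathcal{Q}_1(x,y)).
\end{align*}
To verify this, split $L$ into two pieces. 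The first is $\rho_0\ln\frac{x}{y} - \rho_0\ln\frac{\phi(x)}{\gamma_x(y)}$, a coboundary $w-w\circ\mathcal{Q}$ with $w(x,y)=\rho_0\ln(x/y)$. Coboundary sums telescope, so the intermediate terms $\phi_1(x)/\gamma_{1,x}(y)$ cancel. The second piece is $\rho_0\ln\phi'(x)-\rho_0\ln\partial_y\gamma_x(y)$. Here the chain rule gives $(\phi_2\circ\phi_1)'(x)=\phi_2'(\phi_1(x))\phi_1'(x)$. Since $\gamma_{2,\phi_1(x)}$ does not depend on $y$ through its fibre index, we also have $\partial_y(\gamma_{2,\phi_1(x)}\circ\gamma_{1,x})(y)=\partial_y\gamma_{2,\phi_1(x)}(\gamma_{1,x}(y))\cdot\partial_y\gamma_{1,x}(y)$. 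Taking logarithms turns both products into sums, which proves the additivity.

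Given this cocycle identity, \eqref{inducedroof function1} follows by induction on $n$. For every $(x,y)$ where the iterates are defined,
\begin{align*}
\sum_{j=0}^{n-1}\rho\circ\PPP^j(x,y) = L_{\PPP^n}(x,y), \qquad \PPP^n(x,y)=(f^n(x),g^n_x(y)).
\end{align*}
Take $n=\tau(x)$, which is constant on each $I_s\times\R^+$ by Lemma \ref{lem:tau}. On $I_s$, $F=f^{s}=f_1^{s-1}\circ f_0$ and $G_x=g_x^{s}=g_1^{s-1}\circ g_0$. Differentiability is available on each open cell $I_s\times\R^+$, because none of the intermediate points $f^j(x)$, $0\le j<s$, equals $1$, so every factor is the smooth branch $f_0,g_0$ or $f_1,g_1$. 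This gives $R=L_P$.

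For \eqref{inducedroof function2} we apply the same identity to $P$, using \eqref{eq:roof function2} and $\kappa\equiv q$ on $J^q$ (Lemma \ref{lem:J}). This yields $\Rh=\sum_{j<\kappa}L_P\circ P^j=L_{P^{\kappa}}=L_{\Ph}$, with $\Ph=(\Fh,\Gh_x)$ as in \eqref{eq:doubleinduceP}.

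The only step needing care is the bookkeeping in the additivity lemma. One must check that all quantities are well defined: points stay in $\R^+$, $y>0$, and the fibre maps are differentiable with positive derivative, so every logarithm makes sense. One must also check that the fibre map of the composition is exactly $\gamma_{2,\phi_1(x)}\circ\gamma_{1,x}$. Both are immediate from the explicit branch formulas \eqref{expressionforF} and \eqref{powerofG}.
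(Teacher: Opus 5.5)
Your proof is correct and is essentially the paper's argument. The paper writes the Birkhoff sum as the logarithm of a single product, telescopes the factors $x/f(x)$ and $g(y)/y$, and collects the derivatives by the chain rule; this is exactly your coboundary-plus-chain-rule additivity $L_{\mathcal{Q}_2\circ\mathcal{Q}_1}=L_{\mathcal{Q}_1}+L_{\mathcal{Q}_2}\circ\mathcal{Q}_1$. Your cocycle packaging has one small advantage: it gives the formula for $\Rh$ at once by applying the identity to $P$ with $\kappa\equiv q$, where the paper only says the argument is analogous.
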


\begin{proof}
We only prove equality \eqref{inducedroof function1} (the proof  of \eqref{inducedroof function2} is analogous). 
Recall that our original roof function is defined for every $(x,y) \in \left( \R^+ \setminus \lbrace 1 \rbrace \right) \times \R^+$, reading
\begin{align*}
\rho (x,y) = \rho_0 \ln \left[ \dfrac{x}{y} \cdot \dfrac{g_x(y)}{f(x)}\cdot \dfrac{f'(x)}{\partial_y g_x(y)}\right].
\end{align*}
Thus, given an arbitrary $s\ge 1$ and $x \in I_s$,
\begin{align*}
R (x,y) &= \sum\limits_{j=0}^{s-1} \rho \circ \PPP^j (x,y) = \sum\limits_{j=0}^{s-1} \rho\left( f^j(x) , g_x^j (y) \right) =  \rho_0 \ln \left[ \prod\limits_{j=0}^{s-1} \dfrac{f^j(x)}{g_x^j(y)} \cdot \dfrac{g_x^{j+1} (y)}{f^{j+1}(x)} \cdot \dfrac{f'(f^j(x))}{\partial_y g_x(g_x^j(y))} \right].
\end{align*}
If $s=1$ there is nothing to prove as in this case $F = f$ and $G_x = g_x$.
If $s \ge 2$, recalling that $F = f^s$ and $G_x = g_x^s$, we observe that
$$
\prod\limits_{j=0}^{s-1} \dfrac{f^j(x)}{f^{j+1}(x)} = \dfrac{x\cdot \prod\limits_{j=1}^{s-1} f^j(x)}{\left[\prod\limits_{j=1}^{s-1}f^j(x) \right] \cdot f^s (x)} = \dfrac{x}{F(x)}
\quad\text{and}\quad 
\prod\limits_{j=0}^{s-1} \dfrac{f^j(x)}{f^{j+1}(x)} = \dfrac{x\cdot \prod\limits_{j=1}^{s-1} f^j(x)}{\left[\prod\limits_{j=1}^{s-1}f^j(x) \right] \cdot f^s (x)} = \dfrac{x}{F(x)},
$$
that
$$
\prod\limits_{j=0}^{s-1} \dfrac{g_x^{j+1}(y)}{g_x^{j}(y)} = \dfrac{\left[\prod\limits_{j=1}^{s-1}g_x^j(y) \right] \cdot g_x^s(y)}{y\cdot \prod\limits_{j=1}^{s-1} g_x^j(y)} = \dfrac{G_x(y)}{y}
\quad\text{and}\quad 
\prod\limits_{j=0}^{s-1}f'(f^j(x)) = F'(x)
$$
and
$$
\prod\limits_{j=0}^{s-1}\dfrac{1}{\partial_yg_x(g_x^j(y))} = \dfrac{1}{G_x'(y)}.
$$
These five expressions combined prove \eqref{inducedroof function1}, as desired. 
\end{proof}

\begin{definition}
\label{def:coomologia}
Recall that given a generic dynamical system $T:X \rightarrow X$, we say that two functions $\phi, \psi: X \rightarrow \R$ are \emph{cohomologous} if there exists a measurable \emph{cohomology function} $u: X \rightarrow \R$ such that $\phi = \psi + u - u\circ T$.
\end{definition}

A crucial fact in the study of the correlation function for suspension flows in \cite{avilaGouezelyoccoz} is that 
the roof function is cohomologous to a roof function that depends only on the unstable direction. However, the roof functions \eqref{inducedroof function1} and \eqref{inducedroof function2} depend on both coordinates $(x,y)$. We proceed to show that the induced roof function is cohomologous to one such roof function
(cf. Proposition~\ref{prop:truccodibowen} for the precise statement).
First we observe the following.

\begin{lemma}
\label{prop:coomologia}
The roof function $\Rh$ is cohomologous to  $\rh: \Delta \times \R^+ \rightarrow \R^+$ defined by
\begin{align*}
\rh (x,y) = \rho_0 \ln \left[ \dfrac{\Fh'(x)}{\Gh_x'(y)} \right],
\quad \text{for every $(x,y) \in \Delta \times \R^+$}.
\end{align*}
\end{lemma}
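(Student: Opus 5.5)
The plan is to exhibit the coboundary explicitly, mimicking remark (e) of Section~\ref{sec:standard}. By Proposition~\ref{prop:formuletetto}, for $(x,y)\in\Delta\times\R^+$ we have
\begin{align*}
\Rh(x,y) = \rho_0 \ln \left[ \dfrac{x}{y}\cdot \dfrac{\Gh_x(y)}{\Fh(x)}\right] + \rho_0\ln\left[\dfrac{\Fh'(x)}{\Gh_x'(y)} \right] = \rho_0 \ln \left[ \dfrac{\Gh_x(y)}{\Fh(x)}\right] - \rho_0\ln\left[\dfrac{y}{x}\right] + \rh(x,y).
\end{align*}
I define the cohomology function $u:\Delta\times\R^+\to\R$ by $u(x,y)=\rho_0\ln\left(y/x\right)$. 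This is well defined and finite, since $x\in\Delta\subset(g_0(1),1)$ is bounded away from $0$ and $y>0$. Since $\Ph(x,y)=(\Fh(x),\Gh_x(y))$ by \eqref{eq:doubleinduceP}, the first bracket equals $u\circ\Ph(x,y)$. Hence
\[
\Rh = \rh + u\circ\Ph - u .
\]
In the sign convention of Definition~\ref{def:coomologia}, $\Rh=\rh+w-w\circ\Ph$ with $w=-u$. Measurability of $w$ is clear, because it is continuous on $\Delta\times\R^+$.

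The only remaining points are bookkeeping. First, I check that $\Ph$ maps $\Delta\times\R^+$ into the domain where $u$ is evaluated. Indeed $\Fh(x)\in(g_0(1),1)$ by Proposition~\ref{prop:MarkovGibbs}(1), and $\Gh_x(y)>0$ by Lemma~\ref{lem:G}(2). Second, I verify that $\rh$ indeed takes values in $\R^+$, as the statement asserts. Here $\Gh_x'(y)<1$ was observed after Lemma~\ref{lem:G}, and $\Fh'(x)>1$ holds by Proposition~\ref{prop:MarkovGibbs}(2), so $\Fh'(x)/\Gh_x'(y)>1$ and $\rh>0$.

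No genuine obstacle is expected. The whole content is already contained in the telescoping formula \eqref{inducedroof function2}, and the same computation verbatim also gives the analogous statement for $R$ and $P$.
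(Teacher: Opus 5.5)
Your proposal is correct and is essentially the paper's argument. The paper takes $v(x,y)=\rho_0\ln(x/y)$, which is your $w=-u$, and reads off $\Rh=\rh+v-v\circ\Ph$ directly from the formula in Proposition~\ref{prop:formuletetto}. Your extra bookkeeping (positivity of $\rh$, and that $\Ph$ lands where the coboundary formula makes sense) is a harmless addition that the paper leaves implicit.
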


\begin{proof}
Take $v (x,y) = \rho_0 \ln \left( \dfrac{x}{y} \right)$, for $(x,y) \in \Delta \times \R^+$. Then
\begin{align*}
\Rh(x,y) &= \rho_0 \ln \left[ \dfrac{x}{y}\cdot \dfrac{\Gh_x(y)}{\Fh(x)}\cdot \dfrac{\Fh'(x)}{\Gh_x'(y)} \right] \\
& = \rho_0 \ln \left[ \dfrac{\Fh'(x)}{\Gh_x'(y)} \right] + \rho_0 \ln \left( \dfrac{x}{y} \right) - \rho_0 \ln \left[ \dfrac{\Fh(x)}{\Gh_x(y)} \right]\\[6pt]
&= \rh(x,y) + v(x,y) - v \circ \Ph (x,y)
\end{align*}
for every $(x,y) \in \Delta \times \R^+$. 
\end{proof}

The latter means that there exists a modified cross-section in which the flow can be modeled by a suspension flow with roof function $\hat r$.
Since we consider the map $\Pt$, we  consider the corresponding roof function defined on $\Dt \times \R^+$, that is
$\rt = \rh + \rh \circ \Ph$.
A straightforward computation yields that 
\begin{align}
\rt (x,y) = \rho_0 \ln \left[ \dfrac{ \Ft'(x)}{\Gt_x'(y)}\right],
\quad \text{for every $(x,y) \in \Dt \times \R^+$.}\label{rtilde}
\end{align}

The following notations are intended to simplify some of the upcoming expressions and computations. Consider $s,s_0,s_1, q,q_0,q_1 \ge 1$, with $s,s_0,s_1 \ge 2$, and define:
\begin{itemize}
\item $\Gh_{J_s^q} (y) = g_0^{q-1}\circ g_1^{s-1}\circ g_0 (y)$, with $y \in \R^+$.
\item $\Gt_{J_{s_0s_1}^{q_0q_1}} (y) = \Gh_{J_{s_1}^{q_1}} \circ \Gh_{J_{s_0}^{q_0}} (y)$, with $y \in \R^+$.
\item $\rh_s^q (x,y) = \rho_0 \ln \left[ \dfrac{\Fh'(x)}{\Gh_{J_s^q}'(y)}\right]$, with $(x,y) \in \overline{J_s^q}\times \R^+$.
\item $\rt_{s_0s_1}^{q_0q_1} = \rh_{s_0}^{q_0} + \rh_{s_1}^{q_1} \circ \Ph$, which is well-defined on $\overline{J_{s_0s_1}^{q_0q_1}} \times \R^+$.
\end{itemize}

Lemma \ref{lem:G} implies that, if $(x,y) \in J_s^q \times \R^+$, then $\Gh_x(y) = \Gh_{J_s^q}(y)$. Moreover, $\rh|_{J_s^q\times \R^+} = \rh_s^q$, and $\rt|_{J_{s_0s_1}^{q_0q_1}\times \R^+} = \rt_{s_0s_1}^{q_0q_1}$.\

\begin{definition}
\label{def:suitable}
We will say that $x \in \Dt$ is \emph{suitable} if $\Ft^n (x)$ is well-defined for every $n\ge 1$.\\
A pair $(x,y) \in \Dt \times \R^+$ is \emph{suitable} if $x$ is suitable.
\end{definition}

Note that the set of suitable points in $\Dt$ is dense and has full $\widehat{\mu}$-measure, as it obtained by removing from $\Dt$ the countable set of all the preimages, with respect to $\Ft^n$, of the boundary points of the intervals $J_{s_0s_1}^{q_0q_1}$, for any $n\ge 1$. Similarly, the set of suitable pairs in $\Dt \times \R^+$ is dense and has full $\nu$-measure. 
The following result, often referred to as {Bowen's trick} and whose proof  is given in Appendix~\ref{sec:appendixa}, guarantees that $\tilde r$ is cohomologous to a roof function which is constant along the stable foliation. More precisely:

\begin{proposition}
\label{prop:truccodibowen}
Given $y' \in \R^+$ fixed, the function
\begin{align*}
u(x,y) = \sum\limits_{i=0}^{+\infty} \left[ \rt \left( \Pt^i (x,y) \right) - \rt \left( \Pt^i (x,y') \right) \right] \label{bowen}
\end{align*}
is well-defined for every suitable $(x,y) \in \Dt \times \R^+$ and it satisfies 
\begin{align*}
\rt = \rt (\pi (\cdot , \cdot ) , y' )+u-u\circ\Pt,
\end{align*}
where $\pi$ is the projection on the first component.
In particular, $\rt$ is cohomologous to the roof function $\rt (\pi (\cdot , \cdot ) , y' )$, which is constant along the stable foliation.

\end{proposition}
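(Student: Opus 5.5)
The plan is to combine the explicit form \eqref{rtilde} of $\rt$, the chain rule along fibres, and the uniform contraction of Proposition~\ref{prop:unifcontraction}. Since $\Ft'$ does not depend on $y$, the $\ln\Ft'$ parts cancel in each summand. For a suitable $x$ and every $i\ge 0$ we have $\Pt^i(x,y)=(\Ft^i x,\Gt^i_x(y))$, hence
\[
\rt(\Pt^i(x,y))-\rt(\Pt^i(x,y'))=-\rho_0\Big[\ln \Gt'_{\Ft^i x}\big(\Gt^i_x(y)\big)-\ln \Gt'_{\Ft^i x}\big(\Gt^i_x(y')\big)\Big].
\]
By the chain rule the $n$-th partial sum of the series defining $u$ equals $-\rho_0\ln\big[(\Gt^n_x)'(y)/(\Gt^n_x)'(y')\big]$.

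\textbf{Convergence.} I will bound each term by a distortion estimate. On each piece, $\Gt_x=\Gh_{J_{s_1}^{q_1}}\circ\Gh_{J_{s_0}^{q_0}}$ is a composition of the maps $g_0,g_1$. Here $g_1$ has derivative $1$, and $\ln g_0'$ is Lipschitz with a uniform constant: $g_0''/g_0'=-f_0''/(f_0')^2\circ g_0$ is bounded by $C_A$ by \textbf{(A6)}. A composition of contractions then gives, with a constant $K$ independent of the branch,
\[
\big|\ln\Gt'_{z}(a)-\ln\Gt'_z(b)\big|\le K|a-b| .
\]
Proposition~\ref{prop:unifcontraction} gives $|\Gt^i_x(y)-\Gt^i_x(y')|\le g_0'(1)^i|y-y'|$. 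The $i$-th term is therefore at most $\rho_0K\,g_0'(1)^i|y-y'|$, a geometric series, so $u$ is well defined on suitable pairs and locally bounded in $y$.

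\textbf{Cohomological equation.} I will compute $u-u\circ\Pt$ by reindexing. Writing $\Pt(x,y)=(\Ft x,\Gt_x y)$,
\[
u(\Pt(x,y))=\sum_{i\ge0}\Big[\rt(\Pt^{i+1}(x,y))-\rt(\Pt^{i}(\Ft x,y'))\Big].
\]
Subtracting from $u(x,y)$ and using absolute convergence to regroup gives
\[
u(x,y)-u(\Pt(x,y))=\rt(x,y)-\rt(x,y')-\sum_{i\ge0}\Big[\rt(\Pt^{i}(\Ft x,\Gt_x y'))-\rt(\Pt^{i}(\Ft x,y'))\Big]
\]
\[
=\rt(x,y)-\rt(x,y')-u(\Pt(x,y')).
\]
This yields $\rt=\rt(\pi(\cdot,\cdot),y')+u(\pi(\cdot,\cdot),y')\circ\Pt\text{-term}+u-u\circ\Pt$, where the middle term means $u(\Pt(x,y'))$.

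\textbf{Main obstacle.} The crux is the leftover term $u(\Pt(x,y'))=u(\Ft x,\Gt_x y')$. It is \emph{not} identically zero: taking $y=y'$ in the identity forces it to vanish, which would require $(\Gt^n_{\Ft x})'(\Gt_x y')=(\Gt^n_{\Ft x})'(y')$ in the limit, and this fails for generic branches. So the exact identity as worded cannot hold with the series for $u$ as stated, and with this $u$ the clean equality is not recoverable.

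What the computation does show is that the leftover term depends on $x$ alone. Hence $\rt(x,y')+u(\Pt(x,y'))$ is still constant along stable leaves and is cohomologous to $\rt$ via $u$. This is the conclusion actually used afterwards: $\rt$ is cohomologous to a roof function constant along the stable foliation.

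To get the statement exactly as worded, the reference point $y'$ in the $i$-th summand would have to be moved along the orbit, to $(\Ft^i x,y')$ instead of $\Pt^i(x,y')$. Then the telescoping is exact: $u-u\circ\Pt=\rt(x,y)-\rt(x,y')$. However, the terms $\rt(\Ft^ix,\Gt^i_xy)-\rt(\Ft^ix,y')$ no longer tend to zero, because $\Gt^i_x y$ need not approach $y'$. So that modified series is not summable in general, and this route does not work either.

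My write-up would therefore prove convergence as above, establish the identity with the $x$-only correction term $u(\Pt(x,y'))$ absorbed into the stable-constant roof function, and state explicitly that the literal equality $\rt=\rt(\pi(\cdot,\cdot),y')+u-u\circ\Pt$ does not hold without it.
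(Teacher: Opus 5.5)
Your proposal is correct, and your diagnosis is right: the identity as stated is false, and the paper reaches it only through an indexing slip.

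\textbf{Where the paper goes wrong.} Your convergence part is essentially the paper's argument: the mean value theorem in the fibre, a uniform bound on $\partial_\eta\rt$, and the contraction $\Gt_x'\le g_0'(1)$ of Proposition~\ref{prop:unifcontraction}. For the identity, Appendix~\ref{sec:appendixa} writes $u(\Pt(x,y))=\sum_{i\ge0}[\rt(\Pt^{i+1}(x,y))-\rt(\Pt^{i+1}(x,y'))]$. That is, it uses $\Pt^{i+1}(x,y')=\Pt^i(\Ft x,\Gt_x y')$ as reference point, where the definition of $u$ at $(\Ft x,\Gt_x y)$ requires $\Pt^i(\Ft x,y')$. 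The telescoping it obtains is exactly the statement with your term $u(\Pt(x,y'))$ dropped.

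\textbf{Making your $y=y'$ objection rigorous.} By \textbf{(A4)}, $g_0''<0$, so $\Gt_z$ is strictly concave and $\partial_\eta\rt>0$ strictly. Hence every summand of $u(z,w)$ has the sign of $w-y'$, and the $i=0$ summand is nonzero when $w\neq y'$. So $u(\Pt(x,y'))=0$ forces $\Gt_x y'=y'$. This fails on whole branches: for $q_1=1$ one has $\Gt_x y'\in(s_1-1,s_1)$, so it can hold for at most one value of $s_1$. The correct conclusion is $\rt(x,y)=\rt(x,y')+u(\Pt(x,y'))+u(x,y)-u(\Pt(x,y))$, as you derive.

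\textbf{A step that needs more justification.} The uniform Lipschitz constant $K$ of $\eta\mapsto\ln\Gt_z'(\eta)$ does not follow from $\Gt_z$ being a composition of contractions. We have $g_0'(\eta)\to1$ as $\eta\to0$, and the number $q_0+q_1$ of $g_0$-factors is unbounded, so the chain rule alone only gives $C_A(q_0+q_1)$. Uniformity comes from \textbf{(B)}(iii). The derivatives of the partial compositions are dominated by $\prod_{i=1}^{l-1}g_0'(g_0^i(1))\le C_I^{(2)}\omega_l^{(2)}$, which is summable in $l$. This is exactly Proposition~\ref{prop:stimaderivata} and Corollary~\ref{cor:stimaderivata}, and you should invoke them.

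\textbf{An overstated claim.} You say the corrected identity is ``the conclusion actually used afterwards''; that is too generous. The rest of the paper works with the explicit $r(x)=\rho_0\ln[\Ft'(x)/\Gt_x'(y')]$. The branch-derivative bound, UNI and exponential tails are all verified for that formula. Your identity instead delivers the roof function $r(x)+u(\Ft x,\Gt_x y')$.

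The extra term depends on $x$ only through its itinerary: its $i$-th summand is constant on cylinders of depth $i+2$. So for general $f_0$ there is no reason for it to be $C^1$, or even continuous, on the branches, and those properties would have to be re-established. For the M\"obius maps of the modular surface one can check that $u(z,w)=2\rho_0\ln\frac{w+z}{y'+z}$, so there the correction is smooth. This does not affect your proof, but the corrected proposition alone does not justify replacing $\rt$ by $r$ in the later sections.
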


In view of the proposition, we may fix $y'\in \mathbb R$ and define, for each $x \in \Dt$,
\begin{equation}
    \label{eq:aftercohom}
    r(x) = \rt (x , y' ) = \rho_0 \ln \left[ \dfrac{\Ft' (x) }{\Gt_x' (y')}\right]
    \quad\text{and}\quad
    r_{s_0s_1}^{q_0q_1}(x) = \rt_{s_0s_1}^{q_0q_1} ( x , y'),
\end{equation}
where $s_0,s_1,q_0,q_1 \ge 1, \ s_0,s_1 \ge 2$.
By a slight abuse of notation, we will say that 
$\rt$ is cohomologus to $r$ given by \eqref{eq:aftercohom}, which depends only  on the unstable component.
In particular, we can now look at the suspension flow over $\Pt$ and roof function $r$, which shares the same dynamical properties as the one with roof function $\rt$. 

In the remaining of this section we proceed to show that the roof function $r$ satisfies the standard assumptions.
Let us first fix some notations.
The inverse branch of $\Fh\mid_{J_s^q}$ will be denoted by
\begin{align*}
\phi_s^q = \left[ \Fh |_{J_s^q} \right]^{-1} : \left(g_0(1) , 1 \right)  \rightarrow J_s^q,
\end{align*}
and for each $l \ge 1$ we will denote by $\Phi_l$ the set of inverse branches of $\Fh^l$, which are obtained as compositions of functions of the form $\phi_{s_0}^{q_0} \circ \dots \circ \phi_{s_{l-1}}^{q_{l-1}} :  \left( g_0(1) , 1 \right) \rightarrow \bigcap\limits_{i=0}^{l-1}\Fh^{-i} (J_{s_i}^{q_i} )$, for some parameters $s_0, \dots , s_{l-1} \ge 2$ and $q_0, \dots , q_{l-1}\ge 1$.


The next theorem ensures that the standard assumptions are satisfied by the roof function (we denote by $D$ the derivative with respect to $x$).

\begin{proposition}
\label{thm:rstandard1}

The roof function $r$ satisfies the following properties:
\begin{enumerate}
    \item $\inf\limits_{x \in \Dt} r (x) > 0$;
    \item There exists $C_D>0$ such that, for any $\phi \in \Phi_2$, $\| D(r \circ \phi ) \|_{C^0} \le C_D$.
\end{enumerate}
\end{proposition}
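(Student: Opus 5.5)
For item (1) I will bound the two factors inside the logarithm in \eqref{eq:aftercohom} separately, and for item (2) I will use that the stable factor $\Gt_x'(y')$ does not depend on $x$ within each cylinder, so only $\Ft'$ contributes to the derivative and Adler's property for $\Ft$ finishes the job.

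\emph{Item (1).} Fix $x\in J_{s_0s_1}^{q_0q_1}$. By the chain rule $\Ft'(x)=\Fh'(x)\,\Fh'(\Fh(x))$, and both $x$ and $\Fh(x)$ lie in $\Delta$. Proposition~\ref{prop:MarkovGibbs}(2) then gives
\begin{align*}
\Ft'(x) > f_0'(g_0(1))^2 > 1 .
\end{align*}
Proposition~\ref{prop:unifcontraction} applied at $y=y'$ gives $0<\Gt_x'(y')\le g_0'(1)<1$. Combining the two bounds,
\begin{align*}
r(x)=\rho_0\ln\left[\dfrac{\Ft'(x)}{\Gt_x'(y')}\right]\ \ge\ \rho_0\ln\left[\dfrac{f_0'(g_0(1))^2}{g_0'(1)}\right]>0 .
\end{align*}
This lower bound is independent of $x$, which proves (1).

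\emph{Item (2).} I will use that, by Lemma~\ref{lem:G}, $\Gt_x=\Gt_{J_{s_0s_1}^{q_0q_1}}$ is the same map for every $x$ in a given cylinder $J_{s_0s_1}^{q_0q_1}$. Hence $c:=\rho_0\ln \Gt_{J_{s_0s_1}^{q_0q_1}}'(y')$ is a constant there, and on that cylinder
\begin{align*}
r=\rho_0\ln \Ft' - c .
\end{align*}
Any $\phi\in\Phi_2$ has the form $\phi_{s_0}^{q_0}\circ\phi_{s_1}^{q_1}$. It is exactly the inverse branch of $\Ft=\Fh^2$ onto $J_{s_0s_1}^{q_0q_1}$, so $\phi'(z)=1/\Ft'(\phi(z))$. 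Differentiating,
\begin{align*}
D(r\circ\phi)(z)=\rho_0\,\dfrac{\Ft''(\phi(z))}{\Ft'(\phi(z))}\,\phi'(z)=\rho_0\,\dfrac{\Ft''(\phi(z))}{\Ft'(\phi(z))^2}.
\end{align*}
It remains to bound this quantity above and below.

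The lower bound comes from the sign of $\Ft''$. By \eqref{secondderivativeFhat} together with (A2) and (A4), $\Fh''$ is a sum of nonnegative terms, so $\Fh''\ge0$. Then $\Ft''(x)=\Fh''(\Fh x)\,\Fh'(x)^2+\Fh'(\Fh x)\,\Fh''(x)\ge0$, so the displayed quantity is nonnegative. The upper bound is Adler's property for $\Ft$ stated after \eqref{eq:thirdinduce}, which gives that the quantity is at most $\widetilde C_A$. Therefore
\begin{align*}
\|D(r\circ\phi)\|_{C^0}\le \rho_0\widetilde{C}_A=:C_D ,
\end{align*}
uniformly in the branch.

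The only delicate point is the claim that $\Gt_x'(y')$ is genuinely constant in $x$ on each cylinder, so that no $x$-derivative of the stable part appears; I will make this explicit from \eqref{powerofG}. A further technicality is that $\phi$ is defined on all of $(g_0(1),1)$, whose image is the open cylinder. I would handle this by taking the supremum over the open set only, or equivalently by using the $C^2$ extension of each branch to the closure.
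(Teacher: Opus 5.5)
Your proof is correct and follows the paper's argument. Item (1) comes from $\Ft'>f_0'(g_0(1))^2$ together with the contraction of $\Gt_x$, and item (2) from differentiating only $\rho_0\ln\Ft'$ on each cylinder (the stable factor is constant there) and using $D\phi=1/(\Ft'\circ\phi)$ with Adler's property for $\Ft$ to get $\rho_0\widetilde{C}_A$. Your extra observation that $\Ft''\ge 0$, so $D(r\circ\phi)\ge 0$, is a small improvement: the paper only records the upper bound and leaves the control of $|D(r\circ\phi)|$ implicit.
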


\begin{proof}
For the first property, observe that $\Ft'(x) > [f_0'(g_0(1))]^2 > 1$ and $\Gt_x'(y') < 1$, for any $x \in \Dt$. Hence
\begin{align*}
r(x) = \rho_0 \ln \left[ \dfrac{\Ft'(x)}{\Gt_x'(y')}\right] > 2\rho_0\ln \left[ f_0'(g_0(1)) \right] > 0
\end{align*}
for any $x \in \Dt$, thus proving item (1).
Now, consider an inverse branch of $\Ft$, say $\phi_{s_0s_1}^{q_0q_1}: \left( g_0(1),1\right) \rightarrow J_{s_0s_1}^{q_0q_1}$ of $\Phi_2$.
By  definition, $r \circ \phi_{s_0s_1}^{q_0q_1} = r_{s_0s_1}^{q_0q_1} \circ \phi_{s_0s_1}^{q_0q_1}$, where 
\begin{equation}
    \label{eq:roof functions0s1q0q1}
r_{s_0s_1}^{q_0q_1} = \rho_0 \ln \left[ \dfrac{\Ft'}{\Gt_{J_{s_0s_1}^{q_0q_1}}'(y')}\right].
\end{equation}
Therefore:
\begin{align*}
D \left( r_{s_0s_1}^{q_0q_1} \circ \phi_{s_0s_1}^{q_0q_1} (x) \right) = \left[ Dr_{s_0s_1}^{q_0q_1} \right] \left( \phi_{s_0s_1}^{q_0q_1} (x)\right) \cdot D \phi_{s_0s_1}^{q_0q_1}(x) = \rho_0 \cdot \dfrac{\Ft'' \left( \phi_{s_0s_1}^{q_0q_1}(x) \right)}{\Ft'\left( \phi_{s_0s_1}^{q_0q_1}(x) \right)} \cdot D \phi_{s_0s_1}^{q_0q_1}(x).
\end{align*}
Using the Adler's property for $\Ft$, we find that
\begin{align*}
\dfrac{\Ft'' \left( \phi_{s_0s_1}^{q_0q_1}(x) \right)}{\Ft'\left( \phi_{s_0s_1}^{q_0q_1}(x) \right)} = \dfrac{\Ft'' \left( \phi_{s_0s_1}^{q_0q_1}(x) \right)}{\Ft'\left( \phi_{s_0s_1}^{q_0q_1}(x) \right)^2} \cdot \Ft'\left( \phi_{s_0s_1}^{q_0q_1}(x) \right) \le \widetilde{C}_A \cdot \Ft'\left( \phi_{s_0s_1}^{q_0q_1}(x) \right),
\end{align*}
so that
\begin{align*}
D \left( r_{s_0s_1}^{q_0q_1} \circ \phi_{s_0s_1}^{q_0q_1} (x) \right) \le \rho_0 \!\cdot \!\widetilde{C}_A \!\cdot \!\Ft'\left( \phi_{s_0s_1}^{q_0q_1}(x) \right) \!\cdot \!D \phi_{s_0s_1}^{q_0q_1}(x) = \rho_0\!\cdot\! \widetilde{C}_A \!\cdot\! D\left[ \Ft \circ \phi_{s_0s_1}^{q_0q_1} \right] \!(x) = \rho_0\!\cdot\! \widehat{C}_A.
\end{align*}
The result follows from the  arbitrariness of $s_0,s_1,q_0,q_1$ and $x$.
\end{proof}

We are now in a position to prove that $r$ satisfies the UNI condition.

\begin{proposition}
\label{thm:rstandard2}
The roof function $r$ 
can not be written as $r= \psi +\phi \circ T-\phi$ on $\Dt$, where $\psi: \Dt \rightarrow \R$ is constant on $J_{s_0s_1}^{q_0q_1}$, for each $s_0,s_1,q_0,q_1 \ge 1, \ s_0,s_1 \ge 2$, and $\phi: \Delta \rightarrow \R$ is measurable (UNI).
\end{proposition}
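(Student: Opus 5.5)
We argue by contradiction, using periodic orbits as in the standard approach to UNI for full-branch Gibbs--Markov maps. Suppose $r=\psi+\phi\circ\Ft-\phi$ with $\psi$ constant on each $J_{s_0s_1}^{q_0q_1}$ and $\phi$ measurable. The first step is to upgrade $\phi$ to a continuous (indeed Lipschitz on each branch) function. Since $r$ has uniformly bounded derivative along inverse branches (Proposition~\ref{thm:rstandard1}(2)) and $\Ft$ is a full-branch uniformly expanding map with bounded distortion, the usual Livsic-type regularity argument applies. We express $\phi(x)-\phi(x')$ through Birkhoff sums along common inverse branches. Ergodicity and exactness of $\widehat{\mu}$ (Corollary~\ref{cor:prob}) then give a version of $\phi$ that is Lipschitz on $(g_0(1),1)$, with $\psi$ unchanged on cylinders.

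With $\phi$ regular, the identity forces the following: for any two inverse branches $h_1,h_2$ of $\Ft^n$ with the same combinatorics, the differences of Birkhoff sums $S_n r\circ h$ are constant. In particular, for each branch $h\in\Phi_2$, the function $x\mapsto r\circ h(x)-\phi(x)+\phi(h(x))$ must be constant on $(g_0(1),1)$. Composing with iterates, we obtain the standard criterion. For every fixed word, the function
\begin{align*}
x\mapsto \sum_{k\ge 0}\big[r\circ h_{k}\circ\dots\circ h_0(x) - r\circ h'_{k}\circ\dots\circ h'_0(x)\big]
\end{align*}
must be constant, where $(h_k)$ and $(h'_k)$ are two infinite sequences of branches. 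Its derivative must therefore vanish identically. So it suffices to exhibit two branch sequences for which this derivative is nonzero at some point.

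The concrete choice is to compare branches with different $q$ parameters, for instance $J_{22}^{11}$ against $J_{22}^{q q}$ with $q$ large, or more simply two branches $\phi_{s}^{1}$ and $\phi_{s}^{q}$ of $\Fh$. We use $Dr_{s_0s_1}^{q_0q_1}=\rho_0\Ft''/\Ft'$, which follows because $\Gt'_{J}(y')$ does not depend on $x$. The derivative of $r\circ h$ then equals $\rho_0\,(\Ft''/\Ft'^2)(h(x))\cdot(\Ft\circ h)'(x)$. By the formula in the proof of Proposition~\ref{prop:MarkovGibbs}(3), $\Fh''/\Fh'^2$ is a sum of terms $\frac{f_0''}{f_0'^2}(F^j x)\prod g_0'(\cdot)$. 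We need this quantity to be non-constant, and to differ across branches, after composition. This is where assumption \textbf{(A5)} enters: $f_0''/f_0'^2$ is strictly monotone, or at least decreasing and not constant, since it cannot be constant by (A1)–(A3). That monotonicity should let us show that the Birkhoff-sum derivative along one branch is strictly smaller than along another at some point, for example near the endpoint $x\to 1^-$ where branches with large $s$ accumulate.

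The main obstacle is this last sign/monotonicity comparison. The sums involve infinitely many compositions, and we need a strict inequality rather than just a non-strict one. As a first attempt, we would use the simplest pair of periodic orbits instead of infinite sums. Take fixed points of $\Ft$ in two distinct cylinders, together with a period-two orbit visiting both. The coboundary cancels over periodic orbits, so the identity $S_n r=S_n\psi$ there would give a linear relation among finitely many explicit logarithms of $\Ft'$ and $\Gt'(y')$. We would then vary one parameter, such as $s$, to show that the relation fails. This exploits the explicit forms $F=f_0-(s-1)$ and the strict monotonicity from (A4)–(A5).
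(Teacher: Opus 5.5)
Your proposal stops where the proof has to begin. The reductions you describe are standard: a Liv\v{s}ic-type upgrade of the measurable $\phi$, and the consequence that differences of Birkhoff sums of $r$ along inverse branches have asymptotically vanishing derivative. This is the equivalence the paper imports from Avila--Gou\"ezel--Yoccoz. The whole content of the proposition is to exhibit branches for which this fails, and that is the step you call ``the main obstacle'' and leave open. Neither concrete suggestion closes it as written.

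For $\phi_s^1$ versus $\phi_s^q$, the single factor $K=f_0''/f_0'^2$ is replaced on $J_s^q$ by the sum $\sum_j K(F^jx)\prod_{l>j}1/f_0'(F^lx)$. You give no reason why the resulting derivatives should be ordered.

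The periodic-orbit test is logically sound once $\phi$ is continuous. The factor $\Gt'_J(y')$ is locally constant and cancels, so the test reads $(\Ft^2)'(p_{AB})=\Ft'(p_A)\Ft'(p_B)$. But take the natural choice $A=J_{22}^{11}$, $B=J_{33}^{11}$. The $\Fh$-orbit with itinerary $2,2,3,3$ lies to the right of the fixed point $p_2$ on the $J_2^1$ branch and to the left of $p_3$ on the $J_3^1$ branch. By (A4), the two halves of $\sum\ln f_0'$ therefore deviate from $2\ln f_0'(p_2)+2\ln f_0'(p_3)$ in opposite directions. Nothing follows without a further quantitative argument (for instance replacing $3$ by $s$ and letting $s\to\infty$), which you do not give.

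Two smaller points. First, your infinite-sum function is constant only if the two branch sequences coincide from some index on; otherwise the telescoped remainder $\phi(w'_N)-\phi(w_N)$ need not converge to a constant. Second, $K$ can be constant under (A1)--(A4), e.g.\ for $f_0(x)=-\ln(1-x)$, so your justification that it is non-constant is false.

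The missing idea is to compare two branches at the same base point, so that both monotonicity assumptions push in the same direction. The paper uses the derivative form of UNI. For each $n$ it takes $\phi=[\phi_2^1]^{2n}$ and $\overline{\phi}=[\phi_3^1]^{2n}$ in $\Phi_{2n}$: two branches with $q=1$ and different $s$, not different $q$. On these $\Fh'=f_0'$ and the $y'$-factor is constant, so $r^{(n)}\circ\phi-r^{(n)}\circ\overline{\phi}=\sum_{l=0}^{2n-1}\bigl[\rh_2^1\circ[\phi_2^1]^{2n-l}-\rh_3^1\circ[\phi_3^1]^{2n-l}\bigr]$. Writing $m=2n-l$, each term has derivative $\rho_0\bigl[K([\phi_2^1]^{m}x)\prod_{j=1}^{m-1}\frac{1}{f_0'([\phi_2^1]^{j}x)}-K([\phi_3^1]^{m}x)\prod_{j=1}^{m-1}\frac{1}{f_0'([\phi_3^1]^{j}x)}\bigr]$. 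Since $J_2^1$ lies entirely to the left of $J_3^1$, (A5) gives $K\circ[\phi_2^1]^m>K\circ[\phi_3^1]^m$, and (A4) gives $1/f_0'\circ[\phi_2^1]^j>1/f_0'\circ[\phi_3^1]^j$. Hence every term is positive, and the term with $m=1$ is at least $\rho_0\bigl(K(d_2^1)-K(c_3^1)\bigr)>0$, uniformly in $n$ and $x$. This is exactly the uniform lower bound the criterion requires, and it is what your proposal never produces.
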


\begin{proof}
It is nowadays well known that the latter is equivalent to show that 
there exist $C_U>0$ such that, for every $n \ge 1,$ there are inverse branches $ \phi , \overline{\phi} \in \Phi_{2n}$ of $\Ft$ such that, if $\psi_{\phi , \overline{\phi}} = r^{(n)} \circ \phi - r^{(n)} \circ \overline{\phi}$ with $r^{(n)} = \sum\limits_{i=0}^{n-1} r \circ \Ft^i$, then
\begin{align}
\label{eq:originalUNI}
\inf\limits_{x \in \left( g_0(1), 1 \right)} | D \psi_{\phi , \overline{\phi}} (x)| \ge C_U
\end{align}
(see e.g. \cite[page 184]{avilaGouezelyoccoz} for the proof  of the equivalence). 

We proceed to estimate the derivative of the map $\psi_{\phi , \overline{\phi}}$ in the special case that each of the maps $\phi$ and $\overline{\phi}$ are compositions of some specific inverse branch of $F$. More precisely, 
take $n \ge 1$ and choose $\phi = [\phi_2^1]^{2n}$, $\overline{\phi} = [ \phi_3^1]^{2n}$ (meaning that they are $2n$ compositions of individual maps $\phi_2^1$ and $\phi_3^1$ with themselves, respectively). Given $0\le i\le n-1$, observe that $\Ft^i\circ \phi = [\phi_2^1]^{2(n-i)}$ and $\Ft^i\circ \overline{\phi} = [\phi_3^1]^{2(n-i)}$. Together with $r|_{J_{s_0s_1}^{q_0q_1}} = r_{s_0s_1}^{q_0q_1} = \rh_{s_0}^{q_0} + \rh_{s_1}^{q_1} \circ \Fh$, this ensures that one can write
\begin{align*}
\psi_{\phi,\overline{\phi}} & = \sum\limits_{i=0}^{n-1} \left[ r \circ \Ft^i \circ \phi - r \circ \Ft^i \circ \overline{\phi} \right] = \sum\limits_{i=0}^{n-1} \left[ r_{22}^{11} \circ [\phi_2^1]^{2(n-i)} - r_{33}^{11} \circ [\phi_3^1]^{2(n-i)} \right]\\
&= \sum\limits_{i=0}^{n-1} \left\{ \left[\rh_2^1\circ [\phi_2^1]^{2(n-i)} - \rh_3^1 \circ [\phi_3^1]^{2(n-i)} \right] + \left[ \rh_2^1\circ [\phi_2^1]^{2(n-i)-1} - \rh_3^1 \circ [\phi_3^1]^{2(n-i)-1} \right]\right\}\\
&=\sum\limits_{l=0}^{2n-1} \left [ \rh_2^1 \circ [\phi_2^1]^{2n-l} - \rh_3^1\circ[\phi_3^1]^{2n-l} \right]
\end{align*}
and, consequently,
$$
D \psi_{\phi,\overline{\phi}} = \sum\limits_{l=0}^{2n-1} D\left[ \rh_2^1 \circ [\phi_2^1]^{2n-l} - \rh_3^1\circ[\phi_3^1]^{2n-l} \right].
$$
Recall that, by Lemma~\ref{prop:coomologia}, $\rh_s^1(x) \!=\! \rho_0\cdot \ln \left[\dfrac{\Fh|_{J_s^1}'(x)}{\Gh'_{J_s^1}(y')}\right]\!=\!\rho_0\cdot \ln \left[\dfrac{f_0'(x)}{\Gh'_{J_s^1}(y')}\right]$, so that its derivative reads as $D\rh_s^1 (x) = \rho_0 \cdot \dfrac{f_0''(x)}{f_0'(x)}$, for some constant $\rho_0>0$.
Hence, by the chain rule, 
\begin{align*}
D\{ \rh_2^1 & \circ [\phi_2^1]^{2n-l}  - \rh_3^1\circ[\phi_3^1]^{2n-l} \} 
\\ & = \rho_0 \cdot \left\{ \dfrac{f_0''\circ [\phi_2^1]^{2n-l}}{f_0'\circ[\phi_2^1]^{2n-l}} \cdot D[\phi_2^1]^{2n-l} - \dfrac{f_0''\circ [\phi_3^1]^{2n-l}}{f_0'\circ[\phi_2^1]^{2n-l}} \cdot D[\phi_3^1]^{2n-l}\right\} \\
&= \rho_0 \cdot \left\{ \dfrac{f_0''\circ [\phi_2^1]^{2n-l}}{f_0'\circ[\phi_2^1]^{2n-l}} \cdot \prod_{j=0}^{2n-l-1}[D\phi_2^1] \circ [\phi_2^1]^j - \dfrac{f_0''\circ [\phi_3^1]^{2n-l}}{f_0'\circ[\phi_2^1]^{2n-l}} \cdot \prod_{j=0}^{2n-l-1}[D\phi_3^1] \circ [\phi_3^1]^j \right\}
\end{align*}
for a generic $0\le l \le 2n-1$.
Observe that $D[\phi_s^1] = D[\Fh|_{J_s^1}^{-1}] = \dfrac{1}{\Fh|_{J_s^1}' \circ \phi_s^1} = \dfrac{1}{f_0' \circ \phi_s^1}$, which implies that
\begin{align*}
[D\phi_s^1] \circ [\phi_s^1]^{2n-l-1} = \dfrac{1}{f_0' \circ \phi_s^1 \circ [\phi_s^1]^{2n-l-1}} = \dfrac{1}{f_0' \circ [\phi_s^1]^{2n-l}}
\end{align*}
for every $s\ge 1$.

Therefore, we can write $D\left[ \rh_2^1 \circ [\phi_2^1]^{2n-l} - \rh_3^1\circ[\phi_3^1]^{2n-l} \right](x)$ for any $x \in \left(g_0(1),1\right)$ as:
\begin{align*}
\rho_0 \cdot \left\{ \dfrac{f_0''\circ [\phi_2^1]^{2n-l}(x)}{\left( f_0'\circ[\phi_2^1]^{2n-l}(x) \right)^2} \cdot \prod_{j=0}^{2n-l-2}[D\phi_2^1] \circ [\phi_2^1]^j(x) - \dfrac{f_0''\circ [\phi_3^1]^{2n-l}(x)}{\left( f_0'\circ[\phi_2^1]^{2n-l} (x)\right)^2} \cdot \prod_{j=0}^{2n-l-2}[D\phi_3^1] \circ [\phi_3^1]^j(x)  \right\}
\end{align*}

(if $l = 2n-1$, then by notation the product that appears above is just equal to $1$).

\begin{figure}[htb]
    \centering
\includegraphics[width=0.3\linewidth]{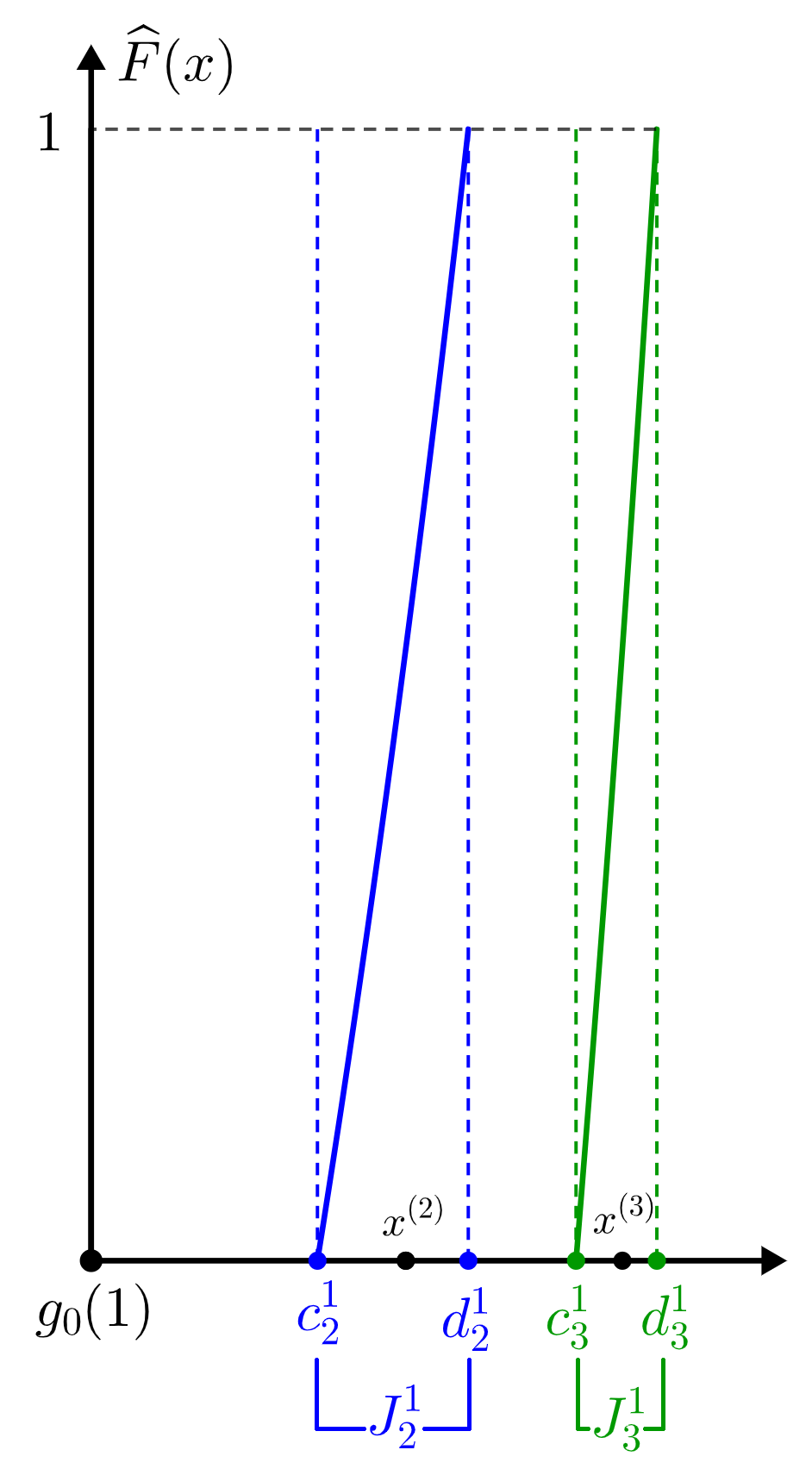}
    \caption{A close-up of intervals $J_2^1$ and $J_3^1$ and the graph of $\Fh$ over them. Here, $x^{(2)} = [\phi_2^1]^{2n-l}(x)$ and $x^{(3)} = [\phi_3^1]^{2n-l}(x)$.}
    \label{fig:intervalsj21j31}
\end{figure}

Now, by construction, as  $ [\phi_2^1]^{2n-l}(x) \in J_2^1$, and $[\phi_3^1]^{2n-l}(x) \in J_3^1$, then $[\phi_2^1]^{2n-l}(x) < [\phi_3^1]^{2n-l}(x)$. Moreover, the assumption \textbf{(A5)} states that the function $(0,1)\ni \xi \mapsto \dfrac{f_0''(\xi)}{(f_0'(\xi))^2}$ is decreasing. In particular, for any $\xi \in J_2^1 = ( c_2^1 , d_2^1 )$ and $\xi' \in J_3^1 = (c_3^1 , d_3^1 )$, we find that
\begin{align*}
\dfrac{f_0''(\xi)}{(f_0'(\xi))^2} > \dfrac{f_0''(d_2^1)}{(f_0'(d_2^1))^2} > \dfrac{f_0''(c_3^1)}{(f_0'(c_3^1))^2} > \dfrac{f_0''(\xi')}{(f_0'(\xi'))^2},
\end{align*}
so that
\begin{align*}
\dfrac{f_0''(\xi)}{(f_0'(\xi))^2}  - \dfrac{f_0''(\xi')}{(f_0'(\xi'))^2} >  \dfrac{f_0''(d_2^1)}{(f_0'(d_2^1))^2} - \dfrac{f_0''(c_3^1)}{(f_0'(c_3^1))^2} 
=: C_U'.
\end{align*}
Note that the constant $C_U'>0$ is independent of $n$ and that 
\begin{align*}
\dfrac{f_0''\circ [\phi_2^1]^{2n-l}(x)}{\left( f_0'\circ[\phi_2^1]^{2n-l}(x) \right)^2} - \dfrac{f_0''\circ [\phi_3^1]^{2n-l}(x)}{\left( f_0'\circ[\phi_3^1]^{2n-l}(x) \right)^2} > C_U'
\end{align*}
and, in particular,
\begin{align*}
\dfrac{f_0''\circ [\phi_2^1]^{2n-l}(x)}{\left( f_0'\circ[\phi_2^1]^{2n-l}(x) \right)^2}> \dfrac{f_0''\circ [\phi_3^1]^{2n-l}(x)}{\left( f_0'\circ[\phi_3^1]^{2n-l}(x) \right)^2}.
\end{align*}
Now, observe that $f_0'(d_2^1) < f_0'(c_3^1)$, so that for any $\xi,\xi' \in \left(g_0(1),1\right)$, we have:
\begin{align*}
D\phi_2^1 ( \xi ) = \dfrac{1}{\Fh' (\phi_2^1(\xi))} = \dfrac{1}{f_0' (\phi_2^1(\xi))} > \dfrac{1}{f_0' (d_2^1)} > \dfrac{1}{f_0' (c_3^1)} > \dfrac{1}{f_0' (\phi_3^1(\xi'))}=\dfrac{1}{\Fh' (\phi_3^1(\xi'))}=D\phi_3^1 ( \xi' ).
\end{align*}
Thus, whenever $0\le l \le 2n-2$, we have that
\begin{align*}
\prod_{j=0}^{2n-l-2}[D\phi_2^1] \circ [\phi_2^1]^j(x) > \prod_{j=0}^{2n-l-2}[D\phi_3^1] \circ [\phi_3^1]^j(x).
\end{align*}
Combining all these estimates,
\begin{align*}
\dfrac{f_0''\circ [\phi_2^1]^{2n-l}(x)}{\left( f_0'\circ[\phi_2^1]^{2n-l}(x) \right)^2} \cdot \prod_{j=0}^{2n-l-2}[D\phi_2^1] \circ [\phi_2^1]^j(x) > \dfrac{f_0''\circ [\phi_3^1]^{2n-l}(x)}{\left( f_0'\circ[\phi_3^1]^{2n-l}(x) \right)^2} \cdot \prod_{j=0}^{2n-l-2}[D\phi_3^1] \circ [\phi_3^1]^j(x),
\end{align*}
and thus $D\left[ \rh_2^1 \circ [\phi_2^1]^{2n-l} - \rh_3^1\circ[\phi_3^1]^{2n-l} \right](x) > 0$.
As for the last term of the sum (i.e. for $l = 2n-1$) we note that we find
\begin{align*}
D\left[ \rh_2^1 \circ \phi_2^1 - \rh_3^1\circ\phi_3^1 \right](x) & = \rho_0 \cdot \left\{\dfrac{f_0''\circ\phi_2^1(x)}{(f_0'\circ\phi_2^1(x))^2} - \dfrac{f_0''\circ\phi_3^1(x)}{(f_0'\circ\phi_3^1(x))^2}\right\} >  \rho_0 \cdot C_U'.
\end{align*}
Altogether we conclude that the statement of the proposition holds with $C_U = \rho_0\cdot C_U'$: 
\begin{align*}
D\psi_{\phi,\overline{\phi}}(x) = \sum\limits_{l=0}^{2n-1} D\left\{ \rh_2^1 \circ [\phi_2^1]^{2n-l}(x) - \rh_3^1\circ[\phi_3^1]^{2n-l}(x) \right\}\ge C_U
\end{align*}
for every $x \in \left(g_0(1),1\right)$.
(we estimate the terms related to $0\le l \le 2n-2$ from below by $0$ and the term for $l=2n-1$ with $C_U$), and one concludes that ~\eqref{eq:originalUNI} holds. This finishes the proof  of the proposition.
\end{proof}

In order for the roof function to satisfy the standard assumptions it remains to prove that it has exponential tails.

\begin{proposition}
\label{thm:rstandard3}
The roof function $r: \Dt \to \mathbb R^+$ has exponential tails: there exists $\sigma>0$ such that $\int e^{\sigma r(x)} \, dx < +\infty$.
\end{proposition}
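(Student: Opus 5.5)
The plan is to exploit the explicit form $r(x)=\rho_0\ln\bigl[\Ft'(x)/\Gt_x'(y')\bigr]$. On each interval $J_{s_0s_1}^{q_0q_1}$ this gives
\begin{align*}
e^{\sigma r(x)}=\bigl(\Ft'(x)\bigr)^{\sigma\rho_0}\cdot\bigl(\Gt_{J_{s_0s_1}^{q_0q_1}}'(y')\bigr)^{-\sigma\rho_0}.
\end{align*}
We then write the integral over $\Dt$ as a sum over the cylinders $J_{s_0s_1}^{q_0q_1}$ and bound each piece separately.

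\textbf{The expanding factor.} By the bounded distortion property of $\Ft$ (constant $\widetilde{C}_A$) and since $\Ft$ maps each cylinder onto $(g_0(1),1)$, we have $\Ft'(x)\asymp \Leb(J_{s_0s_1}^{q_0q_1})^{-1}$ uniformly on the cylinder. Hence
\begin{align*}
\int_{J_{s_0s_1}^{q_0q_1}}(\Ft')^{\sigma\rho_0}\,dx\le C\,\Leb(J_{s_0s_1}^{q_0q_1})^{1-\sigma\rho_0}.
\end{align*}
Using the same distortion argument for $\Fh$, this length factorises up to constants as $\Leb(J_{s_0}^{q_0})\Leb(J_{s_1}^{q_1})$. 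So it suffices to show that $\sum_{s,q}\Leb(J_s^q)^{1-\sigma\rho_0}\,\Gamma_{s,q}^{\sigma\rho_0}<\infty$, where $\Gamma_{s,q}=1/\Gh'_{J_s^q}(y')$ is the stable factor. This step works because $\Gt'$ is a product of the two $\Gh'$ factors.

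\textbf{Length of $J_s^q$.} By the mean value theorem and Lemma \ref{lemma:J}, $J_s^q=g_0(g_1^{s-1}(g_0^q(1),g_0^{q-1}(1)))$. Therefore $\Leb(J_s^q)\le g_0'(s-1)\cdot\Leb\bigl(g_0^q(1),g_0^{q-1}(1)\bigr)$, and the last length is at most $(g_0^{q-1})'(g_0(1))\le C\,\omega_{q}^{(2)}$ after a distortion comparison with \textbf{(B)}(iii). By \textbf{(B)}(ii), $g_0'(s-1)\le C_I^{(1)}\omega_s^{(1)}$. So $\Leb(J_s^q)\lesssim \omega_s^{(1)}\omega_q^{(2)}$.

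\textbf{The stable factor.} We have $\Gh'_{J_s^q}(y')=\prod_{i<q}g_0'(\cdot)$ along $y'\mapsto g_0(y')+s-1\mapsto\cdots$. The first factor $g_0'(y')$ is a fixed constant. The later factors are evaluated at points of $(g_0^{q-1}(s-1),\dots)$, and all these points lie above $g_0^{j}(1)$-type levels. This controls $\Gamma_{s,q}$ only as a product of derivatives of $g_0$ along an orbit, which decays like $\omega_q^{(2)}$ in the worst case. So $\Gamma_{s,q}\lesssim (\omega_q^{(2)})^{-1}$ times a constant, and $\Gamma_{s,q}$ does not depend badly on $s$, since $g_1$ has derivative $1$.

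\textbf{Main obstacle.} The hardest step is balancing these two bounds. With them the summand is at most
\begin{align*}
(\omega_s^{(1)})^{1-\sigma\rho_0}\,(\omega_q^{(2)})^{1-2\sigma\rho_0},
\end{align*}
and choosing $\sigma$ small enough that $\sigma\rho_0\le\sigma_1$ and $2\sigma\rho_0\le\sigma_2$ makes the double sum finite by \textbf{(B)}(i). The delicate points are the following.
\begin{itemize}
\item[(a)] We need to justify the comparison between $\Leb(g_0^q(1),g_0^{q-1}(1))$ and the product in \textbf{(B)}(iii) uniformly in $q$. For this I would use the Adler-type summability already obtained in Proposition \ref{prop:MarkovGibbs}(3), which gives bounded distortion of $g_0^{q}$ on $(g_0(1),1)$.
\item[(b)] We need to check that the stable derivative blows up no faster than the unstable length shrinks. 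If the crude estimate $\Gamma_{s,q}\lesssim(\omega_q^{(2)})^{-1}$ fails, I would instead compare $\Gh'_{J_s^q}(y')$ directly with $(g_0^{q-1})'$ at a point of $(g_0(1),1)$, which is again controlled by the distortion bound in (a).
\end{itemize}
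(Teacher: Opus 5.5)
\textbf{The gap is in your treatment of the stable factor.} Your handling of the expanding factor and of the lengths is fine: $\int_J(\Ft')^{\sigma\rho_0}\le C|J|^{1-\sigma\rho_0}$, $|J_{s_0s_1}^{q_0q_1}|\asymp|J_{s_0}^{q_0}|\,|J_{s_1}^{q_1}|$, and $|J_s^q|\le g_0'(s-1)\,(g_0^{q-1})'(g_0(1))\le C\,\omega_s^{(1)}\omega_q^{(2)}$, which follows from monotonicity alone. The stable-factor argument, however, breaks in three places.
\begin{enumerate}
\item Your reduction to $\sum_{s,q}|J_s^q|^{1-\sigma\rho_0}\Gamma_{s,q}^{\sigma\rho_0}$ is invalid. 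By the chain rule, $\Gt'_{J_{s_0s_1}^{q_0q_1}}(y')=\Gh'_{J_{s_1}^{q_1}}(\eta_0)\,\Gh'_{J_{s_0}^{q_0}}(y')$ with $\eta_0=\Gh_{J_{s_0}^{q_0}}(y')$, not $y'$. When $q_0=1$, $\eta_0=g_0(y')+s_0-1\in(s_0-1,s_0)$, so the second factor contains $g_0'(\eta_0)<g_0'(s_0-1)$. Hence $1/\Gt'$ grows in $s_0$, while $\Gamma_{s_0,1}\Gamma_{s_1,q_1}=\Gamma_{s_1,q_1}/g_0'(y')$ does not depend on $s_0$.
\item The claim that $\Gamma_{s,q}$ does not depend badly on $s$ is false. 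For $q\ge2$, $\Gh'_{J_s^q}(y')$ contains the factor $g_0'(g_0(y')+s-1)\le C_I^{(1)}\omega_s^{(1)}$. The map $g_1$ has derivative $1$, but it pushes the point to height at least $s-1$, where $g_0'$ is small. For the modular surface, $\Gamma_{s,q}=(1+y')^2\bigl(1+(q-1)(s-1+g_0(y'))\bigr)^2\asymp s^2q^2$, so $\Gamma_{s,q}\lesssim(\omega_q^{(2)})^{-1}=(q+1)^2$ fails.
\item Most fundamentally, \textbf{(B)} only gives \emph{upper} bounds on products of $g_0'$, hence only \emph{lower} bounds on $\Gamma$. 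No upper bound on the stable factor, in $q$ or in $s$, can be read off from \textbf{(B)}.
\end{enumerate}

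\textbf{The missing idea is to bound the stable factor by the unstable expansion of the same cylinder, not by the $\omega$'s.} The needed estimate is $1/\Gt'_J(y')\le\widetilde{C}'\,\Ft'(d_J)$, which is Lemma~\ref{lemma:ordini}(2). The paper writes $1/g_0'(w)=f_0'(g_0(w))$ and pairs each factor along the stable orbit of $y'$ with a factor $f_0'$ along the unstable orbit of $d_J$. For example, $f_0'\bigl(g_0(\Gh_{J_{s_0}^{1}}(y'))\bigr)$ is paired with $f_0'(d_{s_0s_1}^{1q_1})$, which is exactly how the $s_0$-growth at the shifted point $\eta_0$ is absorbed. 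Paired points either compare by monotonicity, or their $f_0$-images lie within distance $\max\{1,y'\}$, so \eqref{boundeddistortionf0} gives a constant independent of $s_i,q_i$. Combined with $\Ft'(d_J)\asymp|J|^{-1}$, each summand becomes $\lesssim|J|^{1-2\sigma\rho_0}$. Now only an \emph{upper} bound on $|J|$ is needed, namely on $1/\bigl(\Fh'(d_{s_0}^{q_0})\Fh'(d_{s_1}^{q_1})\bigr)$, and that is what \textbf{(B)} supplies (Lemma~\ref{lemma:ordineminore}). The resulting exponent is $1-2\sigma\rho_0$ in the $s$-variables as well, not $1-\sigma\rho_0$. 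Your fallback (b) points in this direction, but comparing only with $(g_0^{q-1})'$ on $(g_0(1),1)$ still misses the factor $g_0'(g_0(y')+s-1)$ and the shifted evaluation point $\eta_0$.
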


The proof  of Proposition~\ref{thm:rstandard3} will occupy the remainder of this section. First, let us denote $\left(c_{s_0s_1}^{q_0q_1} , d_{s_0s_1}^{q_0q_1} \right) = J_{s_0s_1}^{q_0q_1}$, for some fixed $s_0,s_1 , q_0,q_1 \ge 1$ with $s_0,s_1 \ge 2$. Using \eqref{eq:roof functions0s1q0q1}, 
\begin{align}
\int\limits_{\Dt} e^{\sigma r(x)} \, dx &= \sum\limits_{\substack{s_0,s_1 \ge 2\\q_0,q_1 \ge 1}} \  \int\limits_{J_{s_0s_1}^{q_0q_1}} e^{\sigma \cdot r_{s_0s_1}^{q_0q_1}(x)} \, dx = \sum\limits_{\substack{s_0,s_1 \ge 2\\q_0,q_1 \ge 1}} \  \int\limits_{J_{s_0s_1}^{q_0q_1}} \left[ \dfrac{\Ft'(x)}{\Gt'_{J_{s_0s_1}^{q_0q_1}}(y')} \right]^{\sigma \cdot \rho_0} \, dx
\nonumber \\
&< \sum\limits_{\substack{s_0,s_1 \ge 2\\q_0,q_1 \ge 1}} |J_{s_0s_1}^{q_0q_1}| \cdot \left[ \dfrac{\Ft'(d_{s_0s_1}^{q_0q_1})}{\Gt'_{J_{s_0s_1}^{q_0q_1}}(y')}\right]^{\sigma \cdot \rho_0},
\label{eqseriesRHS}
\end{align}
where we used that $\Ft'(x) < \Ft'(d_{s_0s_1}^{q_0q_1})$ by monotonicity. 
Therefore, we focus on the convergence of the series in the right hand side above. We need the following auxiliary result, whose proof  will be given in the Appendix \ref{sec:appendixb}.

\begin{lemma}
\label{lemma:ordini}

Fix $s_0,s_1 \ge 2$ and $q_0,q_1 \ge 1$. Then:
\begin{itemize}
\item[$\boldsymbol{(1)}$] $\dfrac{1}{\widetilde{C}} \cdot |J_{s_0s_1}^{q_0q_1}|^{-1} \le \Ft'(d_{s_0s_1}^{q_0q_1} ) \le \widetilde{C} \cdot |J_{s_0s_1}^{q_0q_1}|^{-1}$, for some $\widetilde{C}>0$.
\item[$\boldsymbol{(2)}$] $\dfrac{1}{\widetilde{C}'} \cdot \Ft'(d_{s_0s_1}^{q_0q_1}) \le \dfrac{1}{\Gt'_{J_{s_0s_1}^{q_0q_1}}(y')} \le \widetilde{C}' \cdot \Ft'(d_{s_0s_1}^{q_0q_1})$, for some $\widetilde{C}' >0$.
\end{itemize}
\end{lemma}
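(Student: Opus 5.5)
The plan is to prove both items first for the single-step objects $\Fh|_{J_s^q}$ and $\Gh_{J_s^q}$, with constants independent of $s,q$. I would then pass to $\Ft=\Fh^2$ and $\Gt_{J_{s_0s_1}^{q_0q_1}}=\Gh_{J_{s_1}^{q_1}}\circ\Gh_{J_{s_0}^{q_0}}$ by multiplying the two estimates. The main tools are bounded distortion of $\Fh$ and $\Ft$ (Proposition~\ref{prop:MarkovGibbs}(4) and the Adler property of $\Ft$), the distortion inequality \eqref{boundeddistortionf0} for $f_0$, and the summability in assumption \textbf{(B)}.

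\emph{Item (1).} Since $\Ft$ maps $J_{s_0s_1}^{q_0q_1}$ diffeomorphically onto $(g_0(1),1)$, the mean value theorem gives a point $\xi\in J_{s_0s_1}^{q_0q_1}$ with $\Ft'(\xi)\,|J_{s_0s_1}^{q_0q_1}|=1-g_0(1)$. Bounded distortion for $\Ft$, applied to $\xi$ and points approaching $d_{s_0s_1}^{q_0q_1}$, gives
\[
\bigl|\ln\bigl(\Ft'(d_{s_0s_1}^{q_0q_1})/\Ft'(\xi)\bigr)\bigr|\le \widetilde C_A\,(1-g_0(1)).
\]
Hence item (1) holds with $\widetilde C=\max\{1-g_0(1),(1-g_0(1))^{-1}\}\,e^{\widetilde C_A(1-g_0(1))}$. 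This step is routine.

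\emph{Item (2), one step.} I would first establish the analogous comparison for $\Fh'(d_s^q)$ and $1/\Gh_{J_s^q}'(y')$ by writing both as explicit products.

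On one side, $d_s^q=g_0(s-1+g_0^{q-1}(1))$ and $f_0^l\circ f_1^{s-1}\circ f_0(d_s^q)=g_0^{q-1-l}(1)$. Hence, by \eqref{derivativeFhat},
\[
\Fh'(d_s^q)=f_0'(d_s^q)\prod_{j=1}^{q-1} f_0'(g_0^{j}(1)).
\]
On the other side, $\Gh_{J_s^q}=g_0^{q-1}\circ g_1^{s-1}\circ g_0$, and $1/g_0'=f_0'\circ g_0$, so
\[
\frac{1}{\Gh_{J_s^q}'(y')}=\frac{1}{g_0'(y')}\cdot f_0'\bigl(g_0(z)\bigr)\prod_{j=2}^{q-1} f_0'\bigl(g_0^{j}(z)\bigr),\qquad z=s-1+g_0(y').
\]
The factor $1/g_0'(y')$ is a fixed constant. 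I would compare the remaining factors in three groups.
\begin{itemize}
\item The factor $f_0'(d_s^q)$ is paired with $f_0'(g_0(z))$. The images $f_0(d_s^q)=s-1+g_0^{q-1}(1)$ and $f_0(g_0(z))=z$ both lie in $(s-1,s)$, so \eqref{boundeddistortionf0} makes the ratio lie in $[e^{-C_A},e^{C_A}]$, uniformly in $s,q$.
\item The products $\prod_{j\ge1} f_0'(g_0^j(1))$ and $\prod_{j\ge2} f_0'(g_0^j(z))$ are compared term by term after an index shift. Note that $g_0(z)\in(g_0(s-1),g_0(s))$ lies between $g_0(1)$ and $1$, so the point $g_0^j(z)$ and the corresponding point on the orbit of $1$ are separated by a bounded number of steps.
\item The logarithm of the ratio of the two products is then bounded, via \eqref{boundeddistortionf0}, by $C_A\sum_j |g_0^{j}(a)-g_0^{j}(b)|$ for two points $a,b$ in a bounded interval.
\end{itemize}

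The main obstacle is the last group: showing that $\sum_j |g_0^j(a)-g_0^j(b)|$ is bounded uniformly in $q$. Because $g_0$ is only neutrally contracting near $0$, this uniformity is not automatic. I would control it by writing $|g_0^j(a)-g_0^j(b)|\le (g_0^j)'(\cdot)\,|a-b|$. Then \textbf{(B)}(iii), together with the monotonicity of $g_0'$ (which lets the derivative at a general point be compared with the derivative along the orbit of $1$), bounds this by $C_I^{(2)}\omega_j^{(2)}$. Since $\omega_j^{(2)}\in(0,1)$, we have $\omega_j^{(2)}\le[\omega_j^{(2)}]^{1-\sigma_2}$, so the sum is finite by \textbf{(B)}(i). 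The index mismatch between the two orbits costs at most one extra factor, which is bounded using \textbf{(B)}(ii) or simply the fact that $f_0'$ is bounded on $(0,g_0(1))$. This argument is the same summation used in the proof of Proposition~\ref{prop:MarkovGibbs}(3).

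\emph{Composing to $\Ft$ and $\Gt$.} For two steps, write
\[
\Ft'(d_{s_0s_1}^{q_0q_1})=\Fh'(d_{s_0s_1}^{q_0q_1})\cdot\Fh'\bigl(\Fh(d_{s_0s_1}^{q_0q_1})\bigr),\qquad
\Gt'_{J_{s_0s_1}^{q_0q_1}}(y')=\Gh'_{J_{s_1}^{q_1}}\bigl(\Gh_{J_{s_0}^{q_0}}(y')\bigr)\cdot\Gh'_{J_{s_0}^{q_0}}(y').
\]
The point $d_{s_0s_1}^{q_0q_1}$ lies in $\overline{J_{s_0}^{q_0}}$, and $\Fh(d_{s_0s_1}^{q_0q_1})=d_{s_1}^{q_1}$. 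By bounded distortion of $\Fh$ on $J_{s_0}^{q_0}$, $\Fh'(d_{s_0s_1}^{q_0q_1})$ is comparable to $\Fh'(d_{s_0}^{q_0})$. For the $\Gh$ factors, the one-step comparison above must hold uniformly for $y'$ replaced by any point in a bounded range. The point $\Gh_{J_{s_0}^{q_0}}(y')$ lies in $(0,1)$ by Lemma~\ref{lem:G}, and the one-step argument only used that $y'$ lies in a fixed compact set. Multiplying the two one-step comparisons then yields item (2) with $\widetilde C'$ equal to a product of the constants obtained above.
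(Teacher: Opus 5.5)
Your item (1) is fine and is the paper's argument: the mean value theorem on $J_{s_0s_1}^{q_0q_1}$ plus bounded distortion of $\Ft$.

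Item (2) has a genuine gap. The reduction to a one-step comparison $\Fh'(d_s^q)\asymp 1/\Gh'_{J_s^q}(\eta)$, uniform in $s,q$ and in $\eta$ in a bounded set, breaks down whenever some $q_i=1$.

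First, the factor $f_0'(g_0(z))$ that you pull out of $1/\Gh'_{J_s^q}$ exists only for $q\ge2$. For $q=1$ one has $\Gh_{J_s^1}=g_1^{s-1}\circ g_0$, hence $1/\Gh'_{J_s^1}(\eta)=1/g_0'(\eta)$, which does not depend on $s$. On the other hand, $\Fh'(d_s^1)=f_0'(g_0(s))=1/g_0'(s)\to+\infty$ as $s\to\infty$. So $f_0'(d_s^1)$ has no partner, and the one-step lower bound fails uniformly in $s$.

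Second, the claim that $\Gh_{J_{s_0}^{q_0}}(y')\in(0,1)$ is false for $q_0=1$: by Lemma~\ref{lem:G}(2) this point lies in $(s_0-1,s_0)$. So in the second step the factor $1/g_0'\bigl(\Gh_{J_{s_0}^{1}}(y')\bigr)=f_0'\bigl(g_0(s_0-1+g_0(y'))\bigr)$ is not a bounded constant. It is comparable to $f_0'(d_{s_0}^{1})$, which is unbounded in $s_0$. Thus even the upper half of your second one-step estimate fails, and multiplying two one-step estimates does not give (2).

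The missing idea is that, when $q_0=1$, factors must be matched across the two steps, as in the paper's case analysis:
\begin{itemize}
\item $f_0'\bigl(g_0(\Gh_{J_{s_0}^{1}}(y'))\bigr)$, the entry factor of the second step, is paired with $f_0'(d_{s_0s_1}^{1q_1})=\Fh'(d_{s_0s_1}^{1q_1})$ from the first step. Both $f_0$-images lie in $(s_0-1,s_0)$, so \eqref{boundeddistortionf0} costs only $e^{C_A}$.
\item $1/g_0'(y')$ is paired with the bounded leftover factor $f_0'(g_0^{q_1-1}(1))$ if $q_1\ge2$, or with $f_0'(d_{s_1}^{1})$ if $q_1=1$.
\end{itemize}
In the case $q_1=1$ only a one-sided inequality is available. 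This is not a weakness of the method: the lower bound in (2) is false when $q_1=1$. For the modular surface ($f_0(x)=x/(1-x)$, $g_0(y)=y/(1+y)$) with $s_0=2$, $q_0=q_1=1$, $s_1=S$, one computes
\[
\Ft'(d_{2S}^{11})=(3S+2)^2,\qquad \frac{1}{\Gt'_{J_{2S}^{11}}(y')}=\bigl(2+g_0(y')\bigr)^2(1+y')^2,
\]
so the ratio is unbounded as $S\to\infty$. No argument can give the two-sided estimate you set out to prove. The paper's own sketch of that direction, which claims the case $q_0=q_1=1$ follows from bounded distortion, has the same flaw.

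What is true, and all that the proof of Proposition~\ref{thm:rstandard3} uses, is the upper bound $1/\Gt'_{J_{s_0s_1}^{q_0q_1}}(y')\le\widetilde C'\,\Ft'(d_{s_0s_1}^{q_0q_1})$. It follows from the cross-step pairing above.
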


As a consequence of 
Lemma \ref{lemma:ordini} implies that the series in \eqref{eqseriesRHS} converges if and only if 
$$\sum\limits_{\substack{s_0,s_1 \ge 2\\q_0,q_1 \ge 1}} \left[ \dfrac{1}{\Ft'(d_{s_0s_1}^{q_0q_1})} \right]^{1-2\sigma \cdot \rho_0} < +\infty.$$
By bounded distortion and the fact that $d_{s_0s_1}^{q_0q_1} < d_{s_0}^{q_0}$,
\begin{align*}
\dfrac{1}{\Ft'(d_{s_0s_1}^{q_0q_1})} = \dfrac{1}{\Fh'(d_{s_1}^{q_1})} \cdot \dfrac{1}{\Fh'(d_{s_0s_1}^{q_0q_1})} \le \widetilde{C}_D \cdot \dfrac{1}{\Fh'(d_{s_1}^{q_1})} \cdot \dfrac{1}{\Fh'(d_{s_0}^{q_0})}.
\end{align*}

\begin{figure}[htb]
    \centering
    \includegraphics[width=1\linewidth]{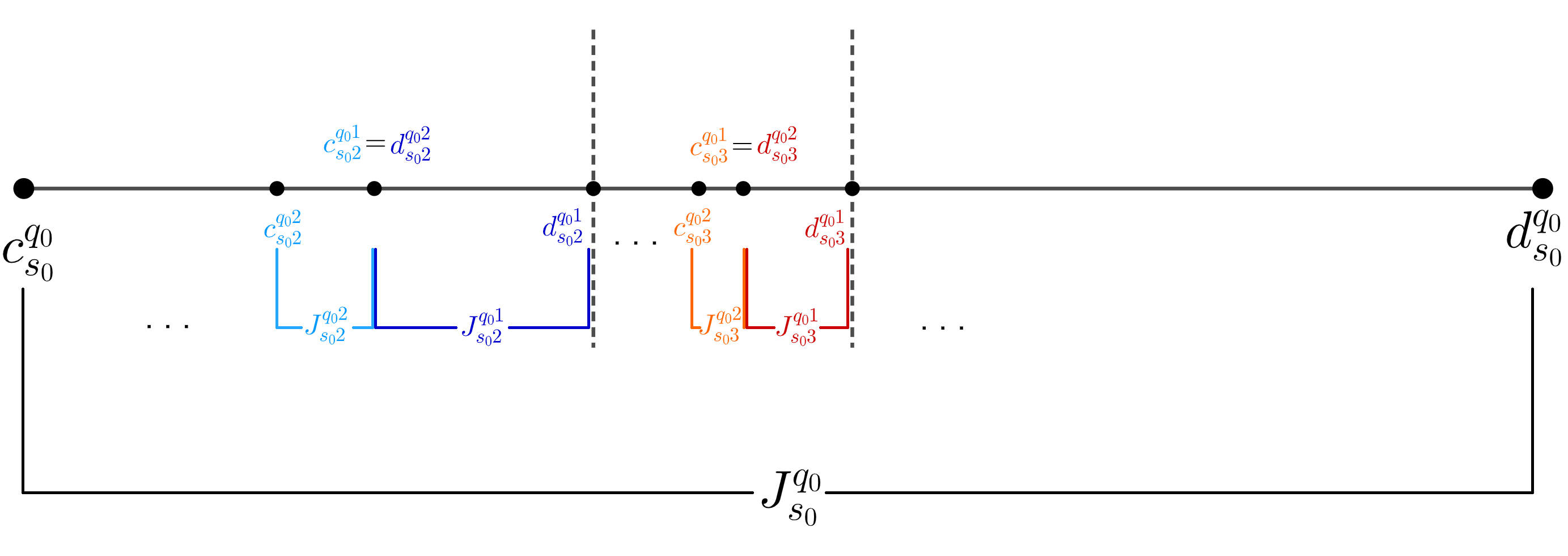}
    \caption{A representation of the interval $J_{s_0}^{q_0}$ and its partition into intervals of the form $J_{s_0s_1}^{q_0q_1}$. Note that this structure is a replica inside of $J_{s_0}^{q_0}$ of the partition of $\left( g_0(1) , 1 \right)$ by the sets $J_s^q$.}
    \label{fig:nestedintervals}
\end{figure}

Hence we need the following auxiliary result which estimates the derivative of $\Fh$ at the points $d_{s_0}^{q_0}$ and $d_{s_1}^{q_1}$.


\begin{lemma}
\label{lemma:ordineminore}
Given any $s \ge 2$ and $q\ge1$, we have that
\begin{align*}
\dfrac{1}{\Fh' (d_s^q)} < C_I^{(1)} \cdot C_I^{(2)} \cdot \omega_s^{(1)} \cdot \omega_q^{(2)}, 
\end{align*}
where these constants are given by
assumption \textbf{(B)}.
\end{lemma}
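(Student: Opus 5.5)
We compute $\Fh'(d_s^q)$ directly from \eqref{derivativeFhat}. For $x=d_s^q=g_0\circ g_1^{s-1}\circ g_0^{q-1}(1)$ we have $F(x)=g_0^{q-1}(1)$ and $F^l(x)=g_0^{q-l}(1)$ for $1\le l\le q-1$. Strictly speaking $d_s^q$ is an endpoint, but by the convention on one-sided limits and the $C^2$ extension of $\Fh|_{J_s^q}$ to $\overline{J_s^q}$ the formula still applies. This gives
\begin{align*}
\Fh'(d_s^q)=f_0'(d_s^q)\cdot\prod_{l=1}^{q-1} f_0'\bigl(g_0^{q-l}(1)\bigr)=f_0'(d_s^q)\cdot\prod_{j=1}^{q-1} f_0'\bigl(g_0^{j}(1)\bigr).
\end{align*}
Taking reciprocals and using $1/f_0'(g_0(z))=g_0'(z)$ for $z>0$, we get
\begin{align*}
\frac{1}{\Fh'(d_s^q)}=\frac{1}{f_0'(d_s^q)}\cdot\prod_{j=1}^{q-1}\frac{1}{f_0'(g_0^{j}(1))}=\frac{1}{f_0'(d_s^q)}\cdot\prod_{j=1}^{q-1} g_0'\bigl(g_0^{j-1}(1)\bigr).
\end{align*}

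Next we match the two factors with assumption \textbf{(B)}. For the first factor, $d_s^q=g_0(z)$ with $z=g_1^{s-1}(g_0^{q-1}(1))=s-1+g_0^{q-1}(1)>s-1$. Since $g_0'$ is decreasing (by (A4)),
\begin{align*}
\frac{1}{f_0'(d_s^q)}=g_0'(z)< g_0'(s-1)\le C_I^{(1)}\omega_s^{(1)}
\end{align*}
by \textbf{(B)(ii)}. The inequality is strict because $z>s-1$ and $g_0'$ is strictly decreasing.

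For the product, the index is shifted relative to \textbf{(B)(iii)}. Indeed,
\begin{align*}
\prod_{j=1}^{q-1}g_0'(g_0^{j-1}(1))=(g_0^{q-1})'(1)=g_0'(1)\cdot\frac{(g_0^{q-1})'(1)}{g_0'(1)}.
\end{align*}
Applying \textbf{(B)(iii)} with $n=q-1$ bounds the right-hand side by $g_0'(1)\,C_I^{(2)}\omega_{q-1}^{(2)}$, which carries the wrong index. It is better to write the product as $\prod_{j=0}^{q-2}g_0'(g_0^j(1))$ and compare it termwise with $\prod_{j=1}^{q-1}g_0'(g_0^j(1))=(g_0^q)'(1)/g_0'(1)\le C_I^{(2)}\omega_q^{(2)}$. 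Since $g_0^{j}(1)<g_0^{j-1}(1)$ and $g_0'$ is decreasing, $g_0'(g_0^{j-1}(1))\le g_0'(g_0^{j}(1))$ for each $j$. Hence
\begin{align*}
\prod_{j=1}^{q-1}g_0'(g_0^{j-1}(1))\le\prod_{j=1}^{q-1}g_0'(g_0^{j}(1))=\frac{(g_0^q)'(1)}{g_0'(1)}\le C_I^{(2)}\omega_q^{(2)}.
\end{align*}
When $q=1$ the product is empty and equals $1$, while \textbf{(B)(iii)} with $n=1$ gives $1\le C_I^{(2)}\omega_1^{(2)}$, so the bound still holds. Multiplying the two estimates yields the claim, with strict inequality coming from the first factor.

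The only delicate point is the index bookkeeping: identifying $F^l(d_s^q)=g_0^{q-l}(1)$, and aligning the product with the normalization $(g_0^n)'(1)/g_0'(1)$ in \textbf{(B)(iii)} via the monotonicity of $g_0'$.
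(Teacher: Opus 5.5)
Your proof is correct and follows essentially the same route as the paper. The paper computes $D\phi_s^q(1)=1/\Fh'(d_s^q)$ through the inverse branch rather than the forward product. It bounds the first factor by $g_0'(s-1)$ using $g_1^{s-1}\circ g_0^{q-1}(1)>s-1$, and it handles the index shift in the product via the ratio $g_0'(1)/g_0'(g_0^{q-1}(1))\le 1$, which amounts to your termwise monotonicity comparison.
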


\begin{proof}

Observe that $d_s^q = \Fh|_{J_s^q}^{-1} (1) = \phi_s^q (1)= g_0 \circ g_1^{s-1} \circ g_0^{q-1} (1)$, so that
\begin{align*}
D\phi_s^q(1) = g_0' (g_1^{s-1} \circ g_0^{q-1}(1)) \cdot \prod\limits_{i=0}^{q-2} g_0' ( g_0^i (1)).
\end{align*}
Now, $g_1^{s-1} \circ g_0^{q-1}(1) > s-1$, and $g_0'$ is decreasing. Hence, $g_0' (g_1^{s-1} \circ g_0^{q-1}(1)) < g_0'(s-1) < C_I^{(1)}\cdot \omega_s^{(1)}$.
Furthermore,
\begin{align*}
\prod\limits_{i=0}^{q-2} g_0' ( g_0^i (1)) = \dfrac{g_0'(1)}{g_0'(g_0^{q-1}(1))} \cdot \prod\limits_{i = 1}^{q-1} g_0'(g_0^i(1)) < C_I^{(2)} \cdot \omega_q^{(2)},
\end{align*}
where we used that $\dfrac{g_0'(1)}{g_0'(g_0^{q-1}(1))} \le 1$, and again assumption \textbf{(B)}.
Thus,
\begin{align*}
\dfrac{1}{\Fh' (d_s^q)} = D\phi_s^q (1) < \left[ C_I^{(1)}\cdot \omega_s^{(1)} \right] \cdot \left[ C_I^{(2)}\cdot \omega_q^{(2)} \right].
\end{align*}
This proves the lemma.
\end{proof}

Now, we are in a position to complete 
the proof  of Proposition~\ref{thm:rstandard3}. In fact, Lemma~\ref{lemma:ordineminore} implies that 
\begin{align*}
&\sum\limits_{\substack{s_0,s_1 \ge 2\\q_0,q_1 \ge 1}} \left[ \dfrac{1}{\Ft'(d_{s_0s_1}^{q_0q_1})} \right]^{1-2\sigma \cdot \rho_0}  < \sum\limits_{\substack{s_0,s_1 \ge 2\\q_0,q_1 \ge 1}} \left\{ \left[ C_I^{(1)} \cdot C_I^{(2)} \right]^2 \cdot \omega_{s_0}^{(1)} \cdot \omega_{s_1}^{(1)} \cdot \omega_{q_0}^{(2)} \cdot \omega_{q_1}^{(2)} \right\}^{1-2\sigma \cdot \rho_0}  \\
    &= \left[ C_I^{(1)} \cdot C_I^{(2)} \right]^{2(1-2\sigma\cdot \rho_0)} \cdot  \sum\limits_{s_0 = 2}^{+\infty} \left[ \omega_{s_0}^{(1)} \right]^{1-2\sigma\cdot \rho_0}\cdot  \sum\limits_{s_1 = 2}^{+\infty} \left[ \omega_{s_1}^{(1)} \right]^{1-2\sigma\cdot \rho_0}\cdot  \sum\limits_{q_0 = 1}^{+\infty} \left[ \omega_{q_0}^{(2)} \right]^{1-2\sigma\cdot \rho_0}\cdot  \sum\limits_{q_1 = 1}^{+\infty} \left[ \omega_{q_1}^{(2)} \right]^{1-2\sigma\cdot \rho_0}.
\end{align*}
Therefore 
choosing $0 < \sigma < \min\limits_{i \in \lbrace 1 , 2 \rbrace} \left( \dfrac{\sigma_i}{2\cdot \rho_0} \right)$
we conclude that all of the series above converge, leading to the exponential tails condition for $r$. \hfill  $\blacksquare$

\section{Exponential Decay of Correlations} \label{sec:expdecay}

This section is devoted to the proof  of Theorem~\ref{thm:main}.

\subsection{Exponential decay for the induced flow} \label{sec:exponentialDOCinducedflow}

We proceed to prove that the suspension flow obtained from the inducing scheme - over the base $\Dt \times \R^+$ with roof function $r$ and measure ${\nu \otimes Leb}/{ \int r \, d\nu}$ - has exponential decay of correlations with respect to its SRB measure. In Section~\ref{sec:base} we proved that $\PPP$ is a $C^2$-piecewise smooth hyperbolic skew-product on $\Dt \times \R^+$, while in Section~\ref{sec:roof function} we verified that the roof function $r$ satisfies its standard assumptions. 
Observe that if the base space $\Dt \times \R^+$ was bounded then one could directly apply Theorem~\ref{thm:AGY}, obtaining Theorem~\ref{thm:main} as a corollary of the latter.
However, this is not the case. 

Here we overcome this obstruction by showing that the unboundedness of the domain does not affect the main the arguments in proof  of Theorem~\ref{thm:AGY}, due to the fact that the unboundedness arises from unbounded stable leaves with strong contracting behaviour under the action of $\Gt$ and the exponential tails property of the roof function $r$.

Let us fix some notations. Given the roof function $r$ defined by ~\eqref{eq:aftercohom}, write  $\Sigma = \Dt \times \R^+$ for the base space, 
$$\Sigma_r = \left\{ [(x,y) , s ] \, \bigg| \, (x,y) \in \Sigma, \ 0 \le s \le r(x) \right\} \bigg / \sim$$ 
for the phase space for the flow (under the equivalence relation $[(x,y),r(x)] \sim\left[\Pt(x,y),0\right]$),
$\Pt_t$ for the suspension flow on $\Sigma_r$, defined by
\begin{align}
\label{eq:defPt}
\Pt_t [(x,y),s] = \left[ \Pt^n (x,y) , s+t-r^{(n)}(x) \right],
\end{align}
where $n$ is the unique natural number such that $r^{(n)} (x) \le t+s < r^{(n+1)}(x)$. Let 
$$
\nu_r = \dfrac{\nu \otimes Leb}{\int\limits_{\Dt \times \R^+}\!\!\!\!r \, d\nu}.
$$
be the unique $\Pt_t$-invariant SRB measure for the flow $\Pt_t$.
It is convenient to define the projection of the flow on the unstable component, hence defining the spaces
$$
\Dt_r = \left\{ (x,t) \, | \, x \in \Dt, \ 0 \le t \le r(x) \right\} / \sim,$$ 
where $(x,r(x))\sim \left(\Ft (x) , 0\right)$,
the suspension semiflow $\Ft_t$ on $\Dt_r$, locally defined for every $t \in [0 , +\infty )$ on any element $(x,s) \in \Dt_r$ as
$\Ft_t (x,s) = \left( x , s+t \right),$
for every $t\ge 0$ (accordingly with the equivalence relation) and the $\Ft_t$-invariant probability measure
$$
\widehat{\mu}_r =   \dfrac{\widehat{\mu} \otimes Leb}{\int\limits_{\Dt} r \, d\widehat{\mu}}.
$$

We need a couple of auxiliary results.

\begin{proposition}
\label{prop:ritorni}

Given a suitable $x \in \Dt, \ a \in [0 , r(x))$, and $t>0$, consider
\begin{align*}
\Psi_t (x,a) = \sup \lbrace n\ge 1 \, | \, a+t> r^{(n)}(x) \rbrace,
\end{align*}
which is the number of returns of $(x,a)\in \Dt_r$ to the base component $\Dt$ under the action of the flow $\Pt_t$ before time $t$. For any $k>1$, there exist $C , \delta>0$ such that
\begin{align}
\int_{\Dt_r} k^{-\Psi_t (x,a)}\, dx \le C \cdot e^{-\delta t}. \label{eqritorni}
\end{align}
for every $t \in [0 , +\infty)$.
\end{proposition}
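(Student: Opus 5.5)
The plan is to follow the standard argument from \cite{avilaGouezelyoccoz} for counting returns, splitting according to whether the orbit has returned often or rarely before time $t$. Note first that $\Psi_t(x,a)\ge n$ whenever $r^{(n)}(x) < t+a$, so $\Psi_t(x,a) < n$ forces $r^{(n)}(x)\ge t+a \ge t$. Fix $\varepsilon>0$, to be chosen small, and set $n_t=\lfloor \varepsilon t\rfloor$. Write
\begin{align*}
\int_{\Dt_r} k^{-\Psi_t}\,dx \le k^{-n_t}\,\Leb_r(\Dt_r) + \Leb_r\big(\{(x,a)\in\Dt_r : \Psi_t(x,a)<n_t\}\big),
\end{align*}
where $\Leb_r$ denotes $dx\,da$ on $\Dt_r$. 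The first term is already $\le C e^{-\varepsilon t\ln k}$, because $\Leb_r(\Dt_r)=\int_{\Dt} r\,dx<\infty$ by Proposition~\ref{thm:rstandard3}.

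For the second term, $(x,a)$ belongs to it only if $r^{(n_t)}(x)\ge t$, and for fixed $x$ the $a$-section has length at most $r(x)$. By Cauchy--Schwarz it therefore suffices to bound $\int_{\Dt} r(x)\mathbf 1_{\{r^{(n_t)}(x)\ge t\}}\,dx$ by $\|r\|_{L^2}\,\Leb(\{r^{(n_t)}\ge t\})^{1/2}$, where $r\in L^2$ by the exponential tails. Markov's inequality then gives
\begin{align*}
\Leb\big(r^{(n)}\ge t\big)\le e^{-\sigma' t}\int_{\Dt} e^{\sigma' r^{(n)}}\,dx,
\end{align*}
and I will show that $\int e^{\sigma' r^{(n)}}dx \le K^n$ for some constant $K$ and some $\sigma'\in(0,\sigma]$.

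The bound $\int e^{\sigma' r^{(n)}}dx\le K^n$ is the main step. I would use the transfer operator $\mathcal L$ of $\Ft$ with respect to Lebesgue, $\mathcal L h=\sum_{\phi\in\Phi_2}|D\phi|\, h\circ\phi$. Since $r^{(n)}=\sum_{i<n} r\circ\Ft^i$, iterating gives
\begin{align*}
\int e^{\sigma' r^{(n)}}dx=\int \mathcal L_{\sigma'}^n 1\,dx, \qquad \mathcal L_{\sigma'} h=\mathcal L(e^{\sigma' r}h).
\end{align*}
It is enough to check $\|\mathcal L_{\sigma'}1\|_\infty\le K$ and that $\mathcal L_{\sigma'}$ does not increase sup norms by more than a factor $K$, i.e. $\sup_x\sum_{\phi}|D\phi(x)|e^{\sigma' r\circ\phi(x)}\le K$. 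Bounded distortion of $\Ft$ (Adler's property $\widetilde C_A$) gives $|D\phi(x)|\le C|J_{s_0s_1}^{q_0q_1}|$ uniformly in $x$. Proposition~\ref{thm:rstandard1}(2) makes $r\circ\phi$ Lipschitz with uniform constant on the bounded interval $(g_0(1),1)$, so $\sup e^{\sigma' r\circ\phi}\le C\inf e^{\sigma' r\circ\phi}$. Hence the sup is bounded by $C\sum |J|\inf_J e^{\sigma' r}\le C\int e^{\sigma' r}dx<\infty$, which is exactly the series estimated in the proof of Proposition~\ref{thm:rstandard3}. This yields $\Leb(r^{(n_t)}\ge t)\le e^{-\sigma' t}K^{\varepsilon t}$. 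Choosing $\varepsilon$ with $\varepsilon\ln K<\sigma'/2$ makes both terms exponentially small, with $\delta=\min(\varepsilon\ln k,\,\sigma'/4)$.

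The main obstacle is this uniform summability step: making sure that distortion and the Lipschitz control of $r$ on each branch hold uniformly over the countably many branches $J_{s_0s_1}^{q_0q_1}$. That uniformity is exactly what Propositions~\ref{prop:MarkovGibbs} and \ref{thm:rstandard1} provide. The unboundedness in $y$ plays no role here, since the integral is over $\Dt_r$ only.
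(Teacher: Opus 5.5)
Your proof is correct, but it takes a different route from the paper. The paper gives no argument of its own here: it simply quotes \cite[Lemma 8.1]{avilaGouezelyoccoz}. That is legitimate because the statement only involves the expanding semiflow $(\Ft, r)$ over the bounded base $\Dt\subset(g_0(1),1)$, whose standard hypotheses were checked in Propositions~\ref{prop:MarkovGibbs}, \ref{thm:rstandard1} and \ref{thm:rstandard3}; the unbounded $y$-direction never enters.

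You instead give a self-contained large-deviation argument:
\begin{itemize}
\item split at $n_t=\lfloor\varepsilon t\rfloor$;
\item reduce the rare-return set to $\{r^{(n_t)}\ge t\}$ weighted by $r$, and apply Cauchy--Schwarz;
\item apply Markov's inequality;
\item control $\int e^{\sigma' r^{(n)}}\,dx=\int\mathcal L_{\sigma'}^n 1\,dx$ by $K^n$, using a crude sup-norm bound on the weighted transfer operator built from bounded distortion and the uniform Lipschitz bound on $r\circ\phi$.
\end{itemize}

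Your route buys transparency. It shows that only bounded distortion of $\Ft$, the uniform bound on $D(r\circ\phi)$ over the countably many branches, and exponential tails are used; no UNI and no spectral information on $C^1$ are needed. It also shows that the growth constant $K$ need not be close to $1$, since it is absorbed by choosing $\varepsilon\ln K<\sigma'/2$. The citation is shorter but leaves the reader to confirm that the framework of \cite{avilaGouezelyoccoz} applies to this induced semiflow.

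One small remark: the statement writes $dx$, while the later application (Proposition~\ref{prop:decadimento}) integrates against $\widehat\mu_r$. Your bound for $dx\,da$ transfers directly, since $\widehat\mu$ has bounded density with respect to Lebesgue (Corollary~\ref{cor:prob}).
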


\begin{proof}
    This is \cite[Lemma 8.1]{avilaGouezelyoccoz}.
\end{proof}

\begin{lemma}
\label{lemma:ritorni}
The following properties hold:
\begin{itemize}
\item[$\boldsymbol{(1)}$] For every $q \ge 1$, we have that $r \in L^q(\widehat{\mu})$.
\item[$\boldsymbol{(2)}$] Consider a suitable $x \in \Dt $, $a \in [0 , r(x) )$ and $t>0$ such that $t - (r(x)-a) > 0$, one has that
\begin{align}
\Psi_{t-(r(x)-a)} \left( \Ft (x) , 0 \right) = \Psi_t (x,a) - 1. \label{eqritorni2}
\end{align}
\end{itemize}
\end{lemma}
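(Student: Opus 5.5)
For item (1) I will use the exponential tails of Proposition~\ref{thm:rstandard3}. Fix $\sigma>0$ as given there, so that $\int_{\Dt} e^{\sigma r}\,dx<+\infty$. By Corollary~\ref{cor:prob} the density $\frac{d\widehat{\mu}}{d\Leb}$ is bounded. Hence
\begin{align*}
\int_{\Dt} e^{\sigma r}\, d\widehat{\mu} \le \Big\|\frac{d\widehat{\mu}}{d\Leb}\Big\|_\infty \int_{\Dt} e^{\sigma r}\,dx<+\infty .
\end{align*}
Since $r>0$ by Proposition~\ref{thm:rstandard1}(1), the elementary inequality $\frac{(\sigma r)^q}{q!}\le e^{\sigma r}$ holds for every $q\ge 1$. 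Integrating gives $\int r^q\,d\widehat{\mu}\le q!\,\sigma^{-q}\int e^{\sigma r}\,d\widehat{\mu}<+\infty$. For non-integer $q$, I will use $r^q\le 1+r^{\lceil q\rceil}$. The only point needing care is that the density of $\widehat{\mu}$ is bounded on $\Dt$ (not merely on $\Delta$), which is immediate because $\Dt\subset\Delta$ and $\widehat{\mu}$ is the same measure.

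For item (2) I will unfold the definition of $\Psi$ directly. First I record the cocycle identity for Birkhoff sums of $r$ along $\Ft$: for a suitable $x$ and every $n\ge 1$,
\begin{align*}
r^{(n)}(x)=r(x)+r^{(n-1)}(\Ft(x)),
\end{align*}
with the convention $r^{(0)}\equiv 0$. This is valid because suitability guarantees that all iterates $\Ft^j(x)$ are defined, and $\Ft(x)$ is again suitable.

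Set $t'=t-(r(x)-a)>0$. Then for every $n\ge 1$,
\begin{align*}
a+t>r^{(n)}(x)\iff t'+r(x)>r(x)+r^{(n-1)}(\Ft(x))\iff 0+t'>r^{(n-1)}(\Ft(x)).
\end{align*}
Therefore $\{n\ge 1: a+t>r^{(n)}(x)\}=\{m+1: m\ge 0,\ t'>r^{(m)}(\Ft(x))\}$. The value $m=0$ always belongs to the right-hand set since $t'>0$. The value $n=1$ is in the left-hand set because $a+t>r(x)$ is exactly the hypothesis $t'>0$. Taking suprema gives $\Psi_t(x,a)=\Psi_{t'}(\Ft(x),0)+1$.

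The main subtlety is the boundary convention in the definition of $\Psi$, namely that the supremum is over $n\ge 1$, so the empty case must be handled. I would read the definition as the number of returns, counting $\sup\emptyset$ as $0$ (equivalently allowing $n\ge 0$). Under this reading, the case $m=0$ on the right corresponds precisely to $n=1$ on the left, and the identity holds. The supremum is finite because $r\ge \inf r>0$ forces $r^{(n)}(x)\to\infty$.
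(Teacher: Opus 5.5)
Your proposal is correct and follows essentially the paper's route. In item (1) you dominate $r^q$ by $e^{\sigma r}$ and use the bounded density of $\widehat{\mu}$, with $(\sigma r)^q/q!\le e^{\sigma r}$ replacing the paper's level-set decomposition $A_n=\{n<r\le n+1\}$; in item (2) you unfold $r^{(n)}(x)=r(x)+r^{(n-1)}(\Ft(x))$ in the definition of $\Psi$, and your explicit convention $\sup\emptyset=0$ (the case $t-(r(x)-a)\le r(\Ft(x))$) covers the step $k=1\mapsto k-1=0$ that the paper glosses over.
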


\begin{proof}
Fix $q \ge 1$. Since $\widehat{\mu}$ is equivalent to $\Leb$, with density bounded away from zero and infinity, in order to prove item (1) it is enough to show that $r \in L^q (Leb)$. Note that 
\begin{align*}
\int_\Dt r(x)\, dx \le \sum\limits_{n=0}^{+\infty} \int_{A_n} (n+1)^qdx
\end{align*}
where, for each $n\ge 1$, $A_n = \left\{ x \in \Dt \, | \, n < r(x) \le n+1 \right\}$. As there exists $\sigma>0$ such that $\int_\Delta e^{\sigma r(x)}dx <+\infty$ and there exists $N\ge 1$ so that $(n+1)^q \le e^{\sigma n}$ for any $n \ge N$, one gets
\begin{align*}
\int_\Dt r(x) \,dx & \le \sum\limits_{n=0}^{N-1} (n+1)^q Leb(A_n) + \sum\limits_{n=N}^{+\infty} \int_{A_n} e^{\sigma n} dx  \\
& \le \sum\limits_{n=0}^{N-1} (n+1)^q Leb(A_n) +  \int_{\Dt} e^{\sigma r(x)} dx, 
\end{align*}
which is finite, as desired.

Now, given a suitable $x \in \Dt $, $a \in [0 , r(x) )$ and $t>0$ such that $t - (r(x)-a) > 0$, take $N \ge 1$ such that $t-(r(x) -a) > r^{(N)}\left(\Ft (x)\right)$, i.e.
\begin{align*}
a+t > r(x) + r^{(N )} \left(\Ft (x) \right) = r^{(N+1 )} (x).
\end{align*}
In particular
$N+1 \in \lbrace n\ge 1 \, | \, a+t > r^{(n)}(x) \rbrace$
and $N+1 \le \Psi_t (x,a)$.
Taking the supremum over $N$ in the set above, we find that $\Psi_{t-(r(x)-a)} \left( \Ft (x) , 0 \right) \le \Psi_t (x,a)-1$.
On the other hand, consider $k \ge 1$ such that $a+t > r^{(k)}(x)$. Therefore:
\begin{align*}
a+t > r(x) + \sum\limits_{i=1}^{k-1} r \circ \Ft^i (x) = r(x) + r^{(k-1)}\left( \Ft (x) \right),
\end{align*}
which gives $a+t-r(x) > r^{(k-1)}\left(\Ft (x) \right)$, so that
\begin{align*}
k-1 \in \left\{ n\ge 1 \, | \, a+t-r(x) > r^{(n)} \left( \Ft (x) \right) \right\}.
\end{align*}
Thus, $k-1 \le \Psi_{t-(r(x)-a)} \left( \Ft (x) , 0 \right)$, i.e. $k \le \Psi_{t-(r(x)-a)} \left( \Ft (x) , 0 \right) + 1$.
Taking the supremum over $k$ in the set above, we finally find $\Psi_t (x,a) \le \Psi_{t-(r(x)-a)} \left( \Ft (x) , 0 \right) +1$. This proves item (2) and finishes the proof of the lemma.
\end{proof}

The next proposition, 
whose argument is an adaptation of the proof of \cite[Lemma 8.2]{avilaGouezelyoccoz} in our unbounded setting, shows that the mixing rates for the suspension flow can be derived from the mixing rates for the suspension semiflow. 
Let us introduce some notation. Given
a function $V: \Sigma_r \to\mathbb R$,
$t \ge 0$ and 
$(x,a) \in \Dt_r$, let us define $V_t : \Dt_r \rightarrow \R$ by
\begin{align*}
V_t (x,a) = \int_{\R^+} V \circ \widetilde{P}_t \left[ (x,y) , a \right]  \, d\nu_x(y),
\end{align*}
and the projection $\pi_r: \Sigma_r \rightarrow \Delta_r$ by
$\pi_r\left[ (x,y) , a \right]= (x,a)$, where $ \left[ (x,y) , a \right] \in \Sigma_r$.

\begin{proposition}
\label{prop:decadimento}
Let $U, V \in C_{b,*}^1(\Sigma_r)$, and assume that $\int_{\Sigma_r} U \, d\nu_r = 0$. Then, there exists $\delta > 0$ which does not depend on $U,V$, and there exists $C > 0$ such that
\begin{align*}
\left| \int_{\Sigma_r} U \cdot V \circ \widetilde{P}_{2t} \, d\nu_r - \int_{\Sigma_r} U \cdot V_t \circ \Ft_t\circ\pi_r \, d \nu_r \right| \le C \cdot e^{-\delta t},
\end{align*}
for every $t \ge 0$.


\end{proposition}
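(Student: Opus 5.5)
The plan is to follow the proof of \cite[Lemma 8.2]{avilaGouezelyoccoz}, adapted to the unbounded stable fibres using Proposition~\ref{prop:unifcontraction} and Lemma~\ref{lem:G}(3). The key idea is that after time $t$ the point $\Pt_t[(x,y),a]$ has undergone $\Psi_t(x,a)$ returns to the base. Each return contracts the stable fibre by a factor at most $g_0'(1)<1$. Hence $V\circ\Pt_{2t}$ is nearly constant along stable fibres, and it can be replaced by its fibre average. Concretely, write $V\circ\Pt_{2t} = (V\circ\Pt_t)\circ\Pt_t$ and compare $V\circ \Pt_t[(x',y),a']$ with its $\nu_{x'}$-average $V_t(x',a')$, where $(x',a')=\Ft_t(x,a)$. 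This is exactly $V_t\circ\Ft_t\circ\pi_r$ evaluated at $[(x,y),a]$.

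\textbf{Step 1 (pointwise oscillation).} For fixed $(x',a')$ and $y_1,y_2$ in the fibre, I will bound $|V\circ\Pt_t[(x',y_1),a'] - V\circ\Pt_t[(x',y_2),a']|$. Both points share the base coordinate, so they return simultaneously $n=\Psi_t(x',a')$ times. Their $y$-distance becomes at most $(g_0'(1))^{n}|y_1-y_2|$, giving the bound $\|DV\|_\infty (g_0'(1))^{n}|y_1-y_2|$. The difficulty is that $|y_1-y_2|$ is unbounded. I will handle this by using the first return instead of the raw distance: after one application of $\Pt$ the whole fibre is mapped into an interval of length at most $2$ (Lemma~\ref{lem:G}(3) applied twice). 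Thus for $n\ge1$ the oscillation is at most $2\|DV\|_\infty (g_0'(1))^{n-1}$, and for $n=0$ it is trivially at most $2\|V\|_\infty$. In both cases the oscillation is at most $C\|V\|_{C^1_b}\,k^{-\Psi_t(x',a')}$ with $k=1/g_0'(1)>1$.

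\textbf{Step 2 (integrating).} Subtracting the fibre average costs at most the oscillation. Therefore
\[
\Big|\int U\cdot V\circ\Pt_{2t}\,d\nu_r-\int U\cdot V_t\circ\Ft_t\circ\pi_r\,d\nu_r\Big| \le C\|U\|_\infty\|V\|_{C^1_b}\int k^{-\Psi_t(\Ft_t(x,a))}\,d\widehat\mu_r(x,a).
\]
Since $\widehat\mu_r$ is $\Ft_t$-invariant, the integral equals $\int k^{-\Psi_t}\,d\widehat\mu_r$. The density of $\widehat\mu$ is bounded, so Proposition~\ref{prop:ritorni} bounds this by $Ce^{-\delta t}$. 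Lemma~\ref{lemma:ritorni}(2) justifies the bookkeeping of returns when the starting point is not on the base. The hypothesis $\int U\,d\nu_r=0$ is not needed here; it is used later. The constant $\delta$ comes only from Proposition~\ref{prop:ritorni} and hence is independent of $U,V$.

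\textbf{Main obstacle.} The delicate point is Step 1 in the unbounded setting. One must make sure that the measurable disintegration $\nu_{x'}$ and the identification $\Pt_t[(x',y),a']$ interact correctly: the number of returns must not depend on $y$, which holds because $r$ depends only on $x$. One must also check that the fibre-length bound after the first return does not degrade uniformity. If the bound of Lemma~\ref{lem:G}(3) fails to suffice, the alternative is to split according to whether $y$ lies in a large compact set. The complement is controlled by tightness of $\nu_{x'}$, which follows from the fact that $\nu_{x'}$ is supported in the image of a bounded interval for $\widehat\mu$-a.e.\ $x'$ by invariance.
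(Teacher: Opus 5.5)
Your argument is correct and has the same core as the paper's proof. You reduce, via invariance and $\Ft_t\circ\pi_r=\pi_r\circ\Pt_t$, to the oscillation of $V\circ\Pt_t$ along a stable fibre. The unbounded fibre is handled by noting that after the first return it lies in an interval of length at most $1$ (Lemma~\ref{lem:G}(3); since $\Gh_{\Fh(x)}$ is a contraction you get $1$, not $2$). Each further return then contracts by a fixed factor, and Proposition~\ref{prop:ritorni} concludes.

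The only structural difference is how you treat the set with no return before time $t$.

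\begin{itemize}
\item The paper splits $\Dt_r$ into $\Dt_r^1=\{r(x)\ge t+a\}$ and $\Dt_r^2$. On $\Dt_r^1$ it bounds the contribution by $2\|V\|_\infty\,\widehat{\mu}_r(\Dt_r^1)$, and estimates this directly using Cauchy--Schwarz, $\|r\|_{L^2(\widehat{\mu})}$ (Lemma~\ref{lemma:ritorni}(1)) and the exponential tails. Proposition~\ref{prop:ritorni}, together with Lemma~\ref{lemma:ritorni}(2), is used only on $\Dt_r^2$.
\item You fold the no-return set into the single estimate through the trivial bound $2\|V\|_\infty=2\|V\|_\infty k^{0}$. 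This is legitimate because Proposition~\ref{prop:ritorni} integrates $k^{-\Psi_t}$ over all of $\Dt_r$, so the tail control is already built into it. You should state explicitly the convention $\Psi_t=0$ when $\{n\ge 1: a+t>r^{(n)}(x)\}$ is empty, since $\Psi_t$ is defined as a supremum over $n\ge 1$.
\end{itemize}

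Your route is shorter and makes Lemma~\ref{lemma:ritorni} unnecessary here. The paper's route makes the role of the exponential tails explicit.

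Your contraction rate $k=1/g_0'(1)$ is the one Proposition~\ref{prop:unifcontraction} actually provides. The paper's $k=[f_0'(g_0(1))]^2$ is the square of it, since $g_0'(1)=1/f_0'(g_0(1))$, and it is not a valid per-return stable contraction rate: on $J_{22}^{11}$ one has $\Gt_x'(y)\to g_0'(1)$ as $y\to 0^+$. This slip is harmless there only because Proposition~\ref{prop:ritorni} holds for every $k>1$.
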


\begin{proof}
Recall that $\Ft_t \circ \pi_r = \pi_r \circ \Pt_t$. Then, using that $U$ is bounded and that $\Pt_t$ preserves $\nu_r$,
\begin{align*}
&\left| \int_{\Sigma_r} U \cdot V \circ \widetilde{P}_{2t} \, d\nu_r - \int_{\Sigma_r} U \cdot V_t \circ \Ft_t\circ\pi_r \, d \nu_r \right| = \left| \int_{\Sigma_r} U \cdot \left( V \circ \Pt_t - V_t \circ \pi_r \right) \circ \Pt_t \, d\nu_r \right| \\
&\le \lVert U \rVert_{\infty} \int_{\Sigma_r} \left| V \circ \Pt_t - V_t \circ\pi_r \right| \circ\Pt_t \, d\nu_r = \lVert U \rVert_{\infty} \int_{\Sigma_r} \left| V \circ \Pt_t - V_t \circ\pi_r \right| \, d\nu_r\\
& = \lVert U \rVert_{\infty} \int\limits_{\Sigma_r} \left| V \circ \Pt_t \left[(x,y),a \right] - V_t \circ\pi_r \left[(x,y),a \right] \right|  \, d\nu_r[(x,y),a].
\end{align*}
Note that $V_t \circ \pi_r \left[(x,y),a \right] = V_t (x,a) = \int_{\R^+} V \circ \Pt_t \left[(x,y'),a \right] \, d\nu_x (y')$, where $\nu_x$ stands for the disintegration 
of $\nu$ given by Proposition~\ref{prop:disintegrazione}.
On the other hand, since $\nu_x$ is a probability on $\R^+$ for any $x$ for which it is defined, one can also write
\begin{align*}
V \circ \Pt_t \left[(x,y),a \right] = \int_{\R^+} V \circ \Pt_t \left[(x,y),a \right] \, d\nu_x(y').
\end{align*}
Thus,
\begin{align*}
& \int\limits_{\Sigma_r} \left| V \circ \Pt_t \left[(x,y),a \right] - V_t \circ\pi_r \left[(x,y),a \right] \right| \circ\Pt_t \, d\nu_r\left[(x,y),a \right]  \\
&= \int\limits_{\Sigma_r} \left| \int_{\R^+} \left[ V \circ \Pt_t \left[(x,y),a \right] - V \circ \Pt_t \left[(x,y'),a \right] \right] \, d\nu_x(y') \right| \, d\nu_r\left[(x,y),a \right] \\
&\le \int\limits_{\Sigma_r}  \int_{\R^+} \left| V \circ \Pt_t \left[(x,y),a \right] - V \circ \Pt_t \left[(x,y'),a \right] \right| \, d\nu_x(y')  \, d\nu_r\left[(x,y),a \right].
\end{align*}

Recall that $d\nu_r \left[ (x,y) , a \right] = \dfrac{1}{\nu \times Leb \left( \Sigma_r \right)} \cdot d\nu (x,y) da$ and that 
\begin{align*}
\int_{(x,y) \in \Sigma} h(x,y) \, d\nu (x,y) = \int_{x \in \Dt} \left[ \int_{y \in \R^+} h (x,y) \, d\nu_x (y) \right] \, d\widehat{\mu}(x),
\end{align*}
for any bounded and uniformly continuous $h : \Sigma \rightarrow \R$ (see Proposition \ref{prop:disintegrazione}).
In this way, since $V \in C_{b,*}^1(\Sigma_r)$, one can decompose the integrals above as follows:
\begin{align*}
& \int\limits_{\Sigma_r}  \int_{\R^+} \left| V \circ \Pt_t \left[(x,y),a \right] - V \circ \Pt_t \left[(x,y'),a \right] \right| \, d\nu_x(y')  \, d\nu_r\left[(x,y),a \right]  \\
\\
& = \dfrac{1}{\nu \times Leb \left( \Sigma_r \right)} \cdot \int\limits_{\Sigma} \int_0^{r(x)} \int\limits_{\R^+} \left| V \circ \Pt_t \left[(x,y),a \right] - V \circ \Pt_t \left[(x,y'),a \right] \right| \, d\nu_x(y') \, da \,d\nu(x,y)\\
\\
&=\dfrac{1}{\nu \times Leb \left( \Sigma_r \right)} \cdot \int\limits_{\Dt} \int\limits_{\R^+} \int_0^{r(x)} \int\limits_{\R^+} \left| V \circ \Pt_t \left[(x,y),a \right] - V \circ \Pt_t \left[(x,y'),a \right] \right| \, d\nu_x(y')\,da\,d\nu_x(y)\,d\widehat{\mu}(x).
\end{align*}
By a change in the order of integration the previous expressions can be rewritten as
\begin{align*}
& \dfrac{1}{\nu \times Leb \left( \Sigma_r \right)} \cdot \int\limits_{\Dt} \int_0^{r(x)} \left[ \int\limits_{\R^+}  \int\limits_{\R^+} \left| V \circ \Pt_t \left[(x,y),a \right] - V \circ \Pt_t \left[(x,y'),a \right] \right| \, d\nu_x (y') \,d\nu_x (y) \right] da \,d\widehat{\mu}(x)\\
&=\dfrac{1}{\nu \times Leb \left( \Sigma_r \right)} \cdot \int\limits_{\Dt_r} \left[ \int\limits_{\R^+}  \int\limits_{\R^+} \left| V \circ \Pt_t \left[(x,y),a \right] - V \circ \Pt_t \left[(x,y'),a \right] \right| \, d\nu_x (y')\, d\nu_x (y) \right] da \,d\widehat{\mu}_r(x).
\end{align*}
Now we decompose $\Dt_r$ in the two disjoint sets
\begin{align*}
\Dt_r^1 = \lbrace (x,a) \in \Dt_r \, | \, r(x) \ge t+a \rbrace
\quad\text{and}\quad \Dt_r^2 = \lbrace (x,a) \in \Dt_r \, | \, r(x) < t+a \rbrace,
\end{align*}
so that we can split the previous integral as the sum of the following terms
\begin{itemize}
\item[] $I^1 = \int\limits_{\Dt_r^1} \left[ \int\limits_{\R^+}  \int\limits_{\R^+} \left| V \circ \Pt_t \left[(x,y),a \right] - V \circ \Pt_t \left[(x,y'),a \right] \right| \, d\nu_x (y') \,d\nu_x (y) \right] da \,d\widehat{\mu}_r(x)$.\\
\item[] $I^2 = \int\limits_{\Dt_r^2} \left[ \int\limits_{\R^+}  \int\limits_{\R^+} \left| V \circ \Pt_t \left[(x,y),a \right] - V \circ \Pt_t \left[(x,y'),a \right] \right| \, d\nu_x (y') \,d\nu_x (y) \right] da \,d\widehat{\mu}_r(x)$.
\end{itemize}

Observe that
\begin{align*}
I^1 & \le \int\limits_{\Dt_r^1} \left[ \int\limits_{\R^+} \int\limits_{\R^+} 2 \lVert V \rVert_{\infty} \right]\, d\widehat{\mu}_r(x,a) =  2 \lVert V \rVert_{\infty} \cdot \widehat{\mu}_r\left( \Dt_r^1 \right).
\end{align*}
We can assert that
\begin{align*}
(x,a) \in \Dt_r^1 \Leftrightarrow \begin{cases} 0 \le a < r(x);\\
t+a\le r(x). \end{cases} \Leftrightarrow \begin{cases} 0 \le a \le r(x)-t;\\
r(x)\ge t. \end{cases}
\end{align*}
Thus, denoting by $\lbrace  r \ge t \rbrace$ the set $\lbrace x \in \Dt \times \R^+ \, | \, r(x) \ge t \rbrace$ for short, the following holds:
\begin{align*}
\widehat{\mu}_r\left(\Dt^1_r \right) &= \dfrac{1}{\widehat{\mu} \times Leb \left ( \Dt^1_r \right)} \cdot \int\limits_{\lbrace r \ge t \rbrace } \left[ \int_0^{r(x)-t} da \right] d \widehat{\mu}(x) = \dfrac{1}{\widehat{\mu} \times Leb \left ( \Dt^1_r \right)} \cdot \int\limits_{\lbrace r \ge t \rbrace } \left[ r(x)-t \right] d \widehat{\mu}(x) \\
& \le \dfrac{1}{\widehat{\mu} \times Leb \left ( \Dt^1_r \right)} \cdot \int\limits_{\lbrace r \ge t \rbrace }  r(x) \,d \widehat{\mu}(x) = \dfrac{1}{\widehat{\mu} \times Leb \left ( \Dt^1_r \right)} \cdot \int\limits_{\Delta} r(x) \cdot \mathds{1}_{\lbrace r \ge t \rbrace} (x) \, d\widehat{\mu}(x) \\
& \le \dfrac{1}{\widehat{\mu} \times Leb \left ( \Dt^1_r \right)} \lVert r \rVert_{L^2(\widehat{\mu})} \cdot \widehat{\mu} \left( \lbrace r \ge t \rbrace \right)^{\frac{1}{2}} \le \dfrac{1}{\widehat{\mu} \times Leb \left ( \Dt^1_r \right)} \lVert r \rVert_{L^2(\widehat{\mu})} \cdot \widehat{\mu} \left( \lbrace r \ge \lfloor t \rfloor \rbrace\right)^{\frac{1}{2}}.
\end{align*}
On the one hand, Lemma \ref{lemma:ritorni} implies that $\lVert r \rVert_{L^2 \left(\,\widehat{\mu}\,\right)} < +\infty$.
On the other hand, by the exponential tails property, one deduce that
\begin{align*}
\widehat{\mu} \left( \lbrace r \ge \lfloor t \rfloor \rbrace\right) \le \widehat{C} \cdot e^{-\widehat{\delta} \lfloor t \rfloor}= \widehat{C} \cdot e^{\widehat{\delta} (t-\lfloor t \rfloor )} \cdot e^{-\widehat{\delta} t} \le \widehat{C} \cdot e^{\widehat{\delta}} \cdot e^{-\widehat{\delta} t},
\end{align*}
for some constants $\widehat{C}, \widehat{\delta}>0$.
Thus, if we call $C_1 = \dfrac{2\sqrt{e^{\widehat{\delta}} \cdot \widehat{C}}\, \lVert V \rVert_{\infty} \lVert r \rVert_{L^2 \left(\,\widehat{\mu} \, \right)}}{\widehat{\mu} \times Leb \left ( \Dt^1_r \right)} $ and $\delta_1 = \dfrac{1}{2} \widehat{\delta}$, we have that $I^1 \le C_1\cdot e^{-\delta_1 t}$.

Let us turn to the estimate for $I^2$. In this case, we have that $r(x) < t+a$, i.e. $ t > r(x)-a$, since we are working on $\Dt_r^2$. We call $d$ the metric on $\Sigma_r$ given by
\begin{align*}
d\left( [(x,y),a] , [(z,w),b]\right) = \lVert (x,y) - (z,w) \rVert_2 + |a-b|.
\end{align*}
Therefore, also employing the definition of the flow $\Pt_t$ and the mean value theorem,
\begin{align*}
&\left| V \circ \Pt_t \left[ (x,y), a \right] - V \circ \Pt_t \left[(x,y'),a \right] \right| \\
& \le \lVert DV \rVert_{\infty} \cdot d \left( \Pt_t[(x,y),a),\Pt_t[(x,y'),a]\right)  \\
& = \lVert DV \rVert_{\infty} \cdot d \left( \Pt_{t-(r(x)-a)} \left( \Pt_{r(x)-a} [(x,y),a] \right) , \Pt_{t-(r(x)-a)} \left( \Pt_{r(x)-a}[(x,y'),a]\right) \right)\\
&= \lVert DV \rVert_{\infty} \cdot d \left( \Pt_{t-(r(x)-a)} \left( \Pt (x,y) , 0 \right) , \Pt_{t-(r(x)-a)} \left( \Pt (x,y'),0 \right) \right).
\end{align*}
Let us denote by $k = \left[ f_0'(g_0(1)) \right]^2$, which we know is an estimate from below of the expansion coefficient of $\Ft$ (see Proposition \ref{prop:MarkovGibbs} item  \textbf{(2)}). Then, using again the mean value theorem and the definition of $\Psi$ from Proposition \ref{prop:ritorni}, we find that last term written above is smaller or equal than 
\begin{align*}
\lVert DV \rVert_{\infty} \cdot k^{-\Psi_{t-(r(x)-a)}\left( \Ft (x) ,0 \right)} \cdot d \left(\left[\Pt (x,y) , 0 \right] , \left[ \Pt(x,y') , 0 \right] \right).
\end{align*}
Observe that, since the terms with $\Pt$ share the same first component, the distance above reduces to $\left| \Gt_x(y) - \Gt_x (y') \right|$, which we know is always less or equal to $1$.
On the other hand, Lemma \ref{lemma:ritorni} showed that $\Psi_{t-(r(x)-a)}\left( \Ft (x) ,0 \right) = \Psi_t (x,a) -1$. Altogether we deduce that 
\begin{align*}
I^2 & \le \lVert DV \rVert_{\infty} \cdot \int\limits_{\Dt_r^2}k^{-\Psi_t (x,a) +1} \, d\widehat{\mu}_r \le \lVert DV \rVert_{\infty} \cdot k \cdot \int\limits_{\Dt_r^2}k^{-\Psi_t (x,a)} \, d\widehat{\mu}_r \\
&\le \lVert DV \rVert_{\infty} \cdot k \cdot \int\limits_{\Dt_r}k^{-\Psi_t (x,a)} \, d\widehat{\mu}_r \le C_2 \cdot e^{-\delta_2 t},
\end{align*}
for some $C_2 , \delta_2>0$ (see Proposition \ref{prop:ritorni}).
This finishes the proof of the proposition.
\end{proof}

Now we observe that, as a consequence of \cite[Theorem~7.3]{avilaGouezelyoccoz}
that establishes exponential decay of correlations for the suspension expanding semiflow one concludes that
$$
\left|\int_{\Sigma_r} U \cdot V_t \circ \Ft_t\circ\pi_r \, d \nu_r \right|
= 
\left|\int_{\Sigma_r} \overline{U} \cdot V_t \circ \Ft_t \, d \mu_r \right|
\le C e^{-\delta t} \|\overline{U}\|_{B_0} \|V_t\|_{B_1}
$$
for every $t\ge 0$. Then, the exponential decay of correlations for the flow 
$\tilde P_t$ with respect to the probability measure $\nu_r$ is a consequence of the latter together with Theorem~\ref{prop:decadimento}.


\subsection{Exponential decay for the original flow} \label{subsec:exponentialDOCoriginalflow}

In this subsection we complete the proof of 
Theorem~\ref{thm:main}. 
Let us first recall the inducing schemes previously constructed
in order to clarify the procedure.

\begin{small}
\hspace{-2.2cm}$\begin{array}{|c||c|c|c|c|}
\hline
\ & \ & \ & \ & \ \\
\ & \text{\textbf{Original map}} & \text{\textbf{First inducing}} & \text{\textbf{Second inducing}} & \text{\textbf{Third inducing}}\\[4pt]
\hline
\hline
\ & \ & \ & \ & \ \\
\   & \ & \text{1st return time on } (0,1) & \text{1st return time on } \left(g_0(1),1\right)& \hspace{-1.5cm}\text{On } \left(g_0(1),1\right)\\[4pt]
\text{\textbf{Return time}} & - & \hspace{-2cm}\text{of } f: \ \tau & \hspace{-1.9cm}\text{of } F: \ \kappa & \hspace{0cm}\text{of } \Fh: \text{ constant 2}\\
\ & \ & \ & \hspace{-0cm}\text{of } f: \ \theta = \sum\limits_{j=0}^{\kappa-1} \tau \circ F^j & \hspace{0.55cm}\text{of } F: \ \, \widetilde{\kappa}= \kappa \circ \Fh + \kappa\\
\ & \ & \ & \ & \hspace{0.5cm}\text{of } f: \ \  \widetilde{\theta} = \theta\circ \Fh + \theta\\[8pt]
\hline
\ & \ & \ & \ & \ \\
\  & f: & F = f^\tau: & \Fh = F^\kappa = f^\theta:  & \Ft = \Fh^2 = F^{\widetilde{\kappa}} = f^{\widetilde{\theta}}: \\[4pt]
\text{\textbf{First component}} & \left( \R^+ \setminus \lbrace 1 \rbrace \right) \circlearrowleft & (0,1) \circlearrowleft & \left( g_0(1),1\right) \circlearrowleft  & \left( g_0(1),1\right) \circlearrowleft \\
\ & \ & \ & \ & \ \\
\hline
\ & \ & \ & \ & \ \\
\text{\textbf{Second component}} & g_x  & G_x = g_x^\tau  & \Gh_x = G_x^\kappa=g_x^\theta & \Gt_x = \Gh_x^2 = G_x^{\widetilde{\kappa}} = g_x^{\widetilde{\theta}}\\
\ & \ & \ & \ & \ \\
\hline
\ & \ & \ & \ & \ \\
\text{\textbf{Skew product}} & \PPP = (f,g_x) & P=\PPP^\tau & \Ph=P^\kappa=\PPP^\theta & \Pt = \Ph^2 = P^{\widetilde{\kappa}} = \PPP^{\widetilde{\theta}}\\
\ & \ & \ & \ & \ \\
\hline
\end{array}$
\end{small}

The next table also collects the information concerning the suspension (semi)flow, which will be useful below.

\begin{small}
\[
\begin{array}{|c||c|c|c|c|}
\hline
 & \textbf{Original} & \textbf{First inducing} & \textbf{Second inducing} & \textbf{Third inducing} \\
\hline\hline
 & & & & \\[-8pt]
\textbf{Map} 
& \PPP 
& P 
& \Ph 
& \Pt \\
\hline
 & & & & \\[-8pt]
\textbf{Roof function} 
& \rho 
& R 
& \rh 
& r \\
\hline
 & & & & \\[-8pt]
\textbf{Suspension flow} 
& \varphi^t 
& - 
& - 
& \tilde P_t \\
\hline
\end{array}
\]
\end{small}

By construction, one can write (up to cohomology, which makes $r$ to depend only on $x$ while $\rho$ and $\mathcal P$ a priori depend on both $x$ and $y$) 
%
\begin{equation}
\label{eq:comparison:roofs}
r^{(n)} (x) = \sum\limits_{j=0}^{\widetilde{\theta}^{(n)}(x)-1} \rho \circ \PPP^j (x,y), \ \ \ \text{where} \ \ \ \widetilde{\theta}^{(n)} (x) = \sum\limits_{i=0}^{n-1} \widetilde{\theta}\circ \Ft^i,   
\end{equation}
or, alternatively (assuming without loss of generality that $\Rh$ and $\rh$ coincide, and that $\rt$ and $r$ also coincide)
\begin{equation}
\label{eq:comparison:roofs2}
r^{(n)} (x) = \sum\limits_{j=0}^{\widetilde{\kappa}^{(n)}(x)-1} R \circ P^j (x,y), \ \ \ \text{where} \ \ \ \widetilde{\kappa}^{(n)} (x) = \sum\limits_{i=0}^{n-1} \widetilde{\kappa}\circ \Ft^i
\end{equation}
for every $n\ge 1$ and every $x$.
The latter tables make simpler the task to relate the flows $\varphi^t$ and $\tilde P_t$. 
In fact,
the suspension flow $\varphi^t$ over $((\mathbb R^+\setminus\{1\})\times \mathbb R^+, \PPP,m)$
defined by ~\eqref{eq:suspension-t}
and preserving $m_\rho$
is semiconjugate to the 
 suspension flow $\tilde P_t$ on $\Sigma_r$ defined by \eqref{eq:defPt} 
and preserving $\nu_r$,
as $\Pt_t$ is an acceleration of the suspension flow $\varphi^t$, with physical time preserved through the induced roof function.
%
More precisely, 
consider the natural projection
$\Pi:\Sigma_r \rightarrow \Sigma_\rho$
given by
\[
\Pi\bigl([(x,y),s]\bigr)
=
\left[
\left(
\mathcal P^{k}(x,y),
\; s-\sum_{i=0}^{k-1}\rho\circ \mathcal P^{i}(x,y)
\right)
\right],
\]
where $k=k(x,y,s)\ge 0$ is the unique integer such that
\[
\sum_{i=0}^{k-1}\rho\circ \mathcal P^{i}(x,y)
\le s
<
\sum_{i=0}^{k}\rho\circ \mathcal P^{i}(x,y).
\]

\color{red}

\color{black}

The suspension flows $\Pt_t$ and $\varphi^t$ satisfy the semiconjugacy relation
\[
\Pi \circ \Pt_t
=
\varphi^t \circ \Pi
\qquad\text{for all } t\ge0.
\]
and $\Pi_*\nu_r=m_\rho$.

\medskip
Hence, given the bounded observables 
$u,v:\Sigma_\rho \rightarrow \mathbb R$
on the suspension space over $P$, consider their lifts to $\Sigma_r$ by
\[
U=u\circ\Pi
\quad\text{and}\quad
V=v\circ\Pi.
\]
Using the semiconjugacy relation $\Pi\circ\Pt_t=\varphi^t\circ\Pi$ and the fact that
$\Pi_*\nu_r = m_\rho,$
we obtain
\[
\begin{aligned}
\int_{\Sigma_r} (V\circ\Pt_t)\,U \, d\nu_r
=
\int_{\Sigma_r} (v\circ\varphi^t\circ\Pi)(u\circ\Pi)\, d\nu_r 
=
\int_{\Sigma_\rho} (v\circ\varphi^t)\,u \, dm_\rho.
\end{aligned}
\]
Thus the correlations functions for both supension flows coincide, and the original flow has exponential decay of correlation for all observables $u,v$ such that $u\circ \Pi, v\circ \Pi \in C^1_{b,*}(\Sigma_r)$.


\section{Geodesic flow on the modular surface} \label{sec:geodesic}

Here we prove Theorem~\ref{thm:geodesic} on 
 the geodesic flow on the modular surface as a consequence of Theorem~\ref{thm:main}, thus obtaining a purely dynamical proof  of its exponential decay of correlations with respect to the Liouville measure.

From Bonanno et al \cite{bonanno_delvigna_isola}, we know that $\PPP$ preserves the measure $m$ absolutely continuous with respect to Lebesgue measure with density 
$$
\dfrac{dm}{dLeb}(x,y) = \dfrac{1}{(x+y)^2}.
$$

It is simple to check, recalling ~\eqref{eq:deffuncgeo}, that 
$$
f'_0(x) = \dfrac{1}{(1-x)^2} \quad\text{and}\quad g_0'(y) = \dfrac{1}{(1+y)^2}
$$
while $f_1'= g_1' \equiv 1$. Moreover, $\rho(x,y)=\dfrac{1}{2} \ln \left[ \dfrac{x}{y} \cdot \dfrac{g_x(y)}{f(x)}\cdot \dfrac{f'(x)}{\partial_y g_x(y)}\right]$ fits the required form for the roof function roof function.

\begin{proposition}
\label{prop:gfAAdlerB}
Properties \textbf{(A)} and \textbf{(B)} are satisfied.
\end{proposition}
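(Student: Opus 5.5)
We verify the hypotheses directly for $f_0(x)=\frac{x}{1-x}$ on $(0,1)$ and $g_0(y)=\frac{y}{1+y}$, using the explicit derivatives $f_0'(x)=(1-x)^{-2}$, $f_0''(x)=2(1-x)^{-3}$ and $g_0'(y)=(1+y)^{-2}$.

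For \textbf{(A1)}--\textbf{(A4)} the checks are immediate. We have $f_0(x)\to 0$ as $x\to0^+$ and $f_0(x)\to+\infty$ as $x\to1^-$. Moreover $f_0'(x)=(1-x)^{-2}>1$ on $(0,1)$, with $f_0'(x)\to1$ at $0$, and $f_0''>0$. For \textbf{(A5)} and \textbf{(A6)} we compute
\begin{align*}
\frac{f_0''(x)}{f_0'(x)^2}=\frac{2(1-x)^{-3}}{(1-x)^{-4}}=2(1-x).
\end{align*}
This is strictly decreasing in $x$ and bounded by $C_A=2$, so both properties hold. We also note that $g_0$ is indeed the inverse of $f_0$, since $f_0(y/(1+y))=y$.

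For \textbf{(B)}, the key step is to identify the iterates $g_0^n(1)$. By induction, $g_0^n(y)=\frac{y}{1+ny}$, since
\begin{align*}
g_0\Big(\frac{y}{1+ny}\Big)=\frac{y/(1+ny)}{1+y/(1+ny)}=\frac{y}{1+(n+1)y}.
\end{align*}
Hence $g_0^n(1)=\frac1{n+1}$, and by the chain rule $(g_0^n)'(y)=(1+ny)^{-2}$. This gives
\begin{align*}
\frac{(g_0^n)'(1)}{g_0'(1)}=\frac{4}{(n+1)^2}.
\end{align*}
Condition (iii) therefore holds with $\omega^{(2)}_n=\frac{1}{(n+1)^2}$ and $C_I^{(2)}=4$. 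Similarly $g_0'(n-1)=\frac1{n^2}$, so (ii) holds with $\omega_n^{(1)}=\frac1{n^2}$ and $C_I^{(1)}=1$, for every $n\ge2$. Both sequences lie in $(0,1)$, decrease monotonically and converge to $0$.

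For the summability condition (i), we have $\sum_n n^{-2(1-\sigma_i)}<+\infty$ exactly when $2(1-\sigma_i)>1$, that is, when $\sigma_i<\tfrac12$. We take $\sigma_1=\sigma_2=\tfrac14$.

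No step is a real obstacle; the only point needing care is the closed form for $g_0^n$, which makes (ii) and (iii) explicit. One should also record that the $\omega$ sequences must be strictly inside $(0,1)$. This holds for $n\ge1$ with the choices above, adjusting indices if needed (for instance, $\omega^{(1)}_1$ can be set to any value in $(0,1)$, since (ii) only concerns $n\ge2$).
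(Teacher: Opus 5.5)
Your argument is correct and essentially the paper's. It uses the same derivative checks for (A1)--(A6), and for (B) the same iterates $g_0^n(1)=\frac{1}{n+1}$, the same bound $\frac{4}{(n+1)^2}$ and $\sigma_i<\tfrac12$; you get the bound from the closed form $g_0^n(y)=\frac{y}{1+ny}$ instead of the paper's telescoping product. Your value $g_0'(n-1)=\frac{1}{n^2}$ is the correct one (the paper writes $\frac{1}{1+(n-1)^2}$, though its bound still holds), and the only nit is that $\omega^{(1)}_1$ must lie in $[\tfrac14,1)$, not be arbitrary in $(0,1)$, to keep the sequence monotone.
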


\begin{proof}

It is straightforward to check that $f_0(x) = \dfrac{x}{1-x}$ satisfies conditions \textbf{(A1)}-\textbf{(A4)}, by noting that
\begin{align*}
f_0'(x) = \dfrac{1}{(1-x)^2}, \ \ \ \ \ \ \ \ \text{and} \ \ \ \ \ \ \ \  f_0''(x) = \dfrac{2}{(1-x)^3}.
\end{align*}
As for properties \textbf{(A5)}-\textbf{(A6)}, just observe that 
$\dfrac{f_0''(x)}{f_0'(x)^2} = 2(1-x)$, which is monotone decreasing, hence
$\sup\limits_{x \in (0,1)} \dfrac{f_0''(x)}{f_0'(x)^2} = 2<+\infty$.

Let us prove that conditions \textbf{(B)}
are also satisfied. 
Consider the sequence $\left\{ \omega_n \right\}_{n \ge 1}$ with  $\omega_n= \dfrac{1}{n^2}$ for every ${n \ge 1}$, and define: $\omega_n^{(1)} = \omega_{n-1}$, for $n \ge 2$; and $\omega_n^{(2)} = \omega_{n+1}$, for $n\ge 1$. Note that $\left\{ \omega_n \right\}_{n \ge 1}$ is monotone decreasing  in $(0,1)$, tends to zero as $n$ tends to $+\infty$, and, $\sum\limits_{n=1}^{+\infty} \left( \dfrac{1}{n^2} \right)^{1-\sigma} < +\infty$ for any $\sigma \in 
\left( 0 , \dfrac{1}{2} \right)$. 
Moreover, as
$$
g_0'(n-1) = \dfrac{1}{1+(n-1)^2} \le \dfrac{1}{(n-1)^2} = \omega_n^{(1)}
$$ 
for every $n\ge 2$,
the first condition of \textbf{(B)} is fulfilled. 

In order to prove the second condition, we prove by induction that $g_0^j(1) = \dfrac{1}{j+1}$, for any $j \ge 1$.  In fact, it is clear that $g_0(1) = \dfrac{1}{2}$, hence it holds for $j=1$.
Furthermore, assume that the formula holds for $j \ge 2$ one concludes that 
$$
g_0^{j+1}(1) = g_0(g_0^j(1)) = \dfrac{g_0'(1)}{1+g_0^j(1)} = \dfrac{\dfrac{1}{j+1}}{1+\dfrac{1}{j+1}} = \dfrac{1}{j+2},
$$ 
which proves it. In consequence,
$g_0'(g_0^j(1))=\dfrac{1}{\left(1+\dfrac{1}{j+1}\right)^2} = \left( \dfrac{j+1}{j+2} \right)^2$, which implies that, for any $n \ge 1$:
\begin{align*}
\prod\limits_{j=1}^{n-1} g_0'(g_0^j(1)) = \prod\limits_{j=1}^{n-1} \left( \dfrac{j+1}{j+2} \right)^2 = \left( \dfrac{2}{n+1} \right)^2 = \dfrac{4}{(n+1)^2} =4 \omega_n^{(2)}.
\end{align*}
This proves that the second condition of 
\textbf{(B)} is satisfied, and finishes the proof  of the lemma.
\end{proof}

We will need the following well known result.

\begin{lemma}
\label{lemma:gfergodic}
Let $\varphi^t$ be a suspension semiflow  
over the base map $(X, T)$ preserving a with $T$-invariant measure $\mu$, and with roof function $\tau$. If the $\varphi^t$-invariant measure $\mu \otimes Leb$ is ergodic then $\mu$ is ergodic.
\end{lemma}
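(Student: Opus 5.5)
The plan is a contrapositive argument: a non-trivial $T$-invariant set in the base saturates to a non-trivial $\varphi^t$-invariant set in the suspension. Assume $\mu\otimes Leb$ is ergodic for $\varphi^t$, and let $A\subset X$ be measurable with $T^{-1}A=A$ ($\mu$-mod $0$). We must show $\mu(A)=0$ or $\mu(X\setminus A)=0$.

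The natural candidate is the vertical saturation
\[
\widehat A=\{[x,s]\in X_\tau \,:\, x\in A,\ 0\le s<\tau(x)\}.
\]
First, $\widehat A$ is well defined modulo the identification $[x,\tau(x)]\sim[Tx,0]$. This holds because $x\in A$ if and only if $Tx\in A$, up to a null set.

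Second, $\widehat A$ is $\varphi^t$-invariant. By the explicit formula for the flow, $\varphi^t[x,s]=[T^nx,\,s+t-\tau^{(n)}(x)]$. The base point $T^nx$ lies in $A$ exactly when $x$ does, again up to null sets. Hence $\varphi^{-t}\widehat A=\widehat A$ modulo $\mu\otimes Leb$-null sets, for every $t\ge0$. The null sets are harmless because they are unions over the countably many $n$ of sets of the form $T^{-n}N$ with $\mu(N)=0$, and these are null by invariance of $\mu$.

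Third, compute the measure via Fubini:
\[
(\mu\otimes Leb)(\widehat A)=\int_A\tau\,d\mu .
\]
Ergodicity forces $\widehat A$ or its complement $\widehat{X\setminus A}$ to be null. Since $\tau>0$ everywhere, $\int_A\tau\,d\mu=0$ implies $\mu(A)=0$, and likewise for the complement. So $\mu$ is ergodic.

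The only delicate point is the meaning of invariance. For a semiflow it should be taken as $\varphi^{-t}\widehat A=\widehat A$ mod $0$, and invariance of $A$ mod $0$ must be handled carefully. I would first replace $A$ by the strictly invariant set $A_\infty=\bigcap_{n\ge0}\bigcup_{k\ge n}T^{-k}A$. This set satisfies $T^{-1}A_\infty=A_\infty$ exactly and differs from $A$ by a null set. After this replacement the saturation step is exact rather than only mod $0$.
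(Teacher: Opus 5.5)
Your argument is correct, but it takes a different route from the one the paper indicates. The paper omits the proof and says only that it follows from the ergodic theorem. Presumably one would take a fibre-constant observable $F[x,s]=f(x)$ and compare its flow time averages along $[x,0]$ with the ratio $S_n(f\tau)(x)/S_n\tau(x)$ of Birkhoff sums for $T$. Constancy of the flow averages then forces $E(f\tau\mid\mathcal I)$ to be a constant multiple of $E(\tau\mid\mathcal I)$, so invariant sets are trivial.

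You work directly with invariant sets instead. Your $\limsup$ replacement $A_\infty$ makes $A$ strictly invariant, and then $\varphi^{-t}\widehat A=\widehat A$ holds exactly, since $\varphi^t[x,s]\in\widehat A$ iff $T^n x\in A$. Fubini together with $\tau>0$ finishes the argument.

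Your route needs no integrability of $\tau$ and no finiteness of $\mu$. It also avoids passing from almost every point of the suspension to almost every point of the base section, which is itself a null set. This matters here: in the application to the modular surface, the base measure $dm=(x+y)^{-2}\,dx\,dy$ on $\R^+\times\R^+$ is infinite. An ergodic-theorem proof would therefore need Hopf's ratio ergodic theorem, or an extra inducing step, rather than plain Birkhoff averages on the base. Your set-theoretic argument covers the infinite case without change. The ergodic-theorem route identifies the limits of averages, which is more than this lemma needs.
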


\begin{proof}
This lemma can be derived as straighforward consequence of the ergodic theorem, and for that reason we shall omit it.
\end{proof}

\begin{proposition}
\label{prop:gfCD}

Properties \textbf{(C)} and \textbf{(D)} are satisfied.
\end{proposition}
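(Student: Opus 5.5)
We verify the two properties for the geodesic-flow data \eqref{eq:deffuncgeo}--\eqref{eq:roof functiongeo}, with $m$ given by the density $(x+y)^{-2}$. The plan has four steps: invariance of $m$, finiteness of $m$ on vertical strips, integrability of $\rho$, and ergodicity of $m$.

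\textbf{Invariance.} $\PPP$-invariance of $m$ is taken from \cite{bonanno_delvigna_isola}. Alternatively, it can be checked directly from the Jacobian identity. On $x>1$ the map is a translation along the anti-diagonal, so $x+y$ is preserved. On $x<1$ we have $f_0(x)+g_0(y) = \frac{x+y}{(1-x)(1+y)}$ and the Jacobian is $(1-x)^{-2}(1+y)^{-2}$, so the density transforms correctly.

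\textbf{Finiteness on strips.} For $0<a<b$,
\[
m((a,b)\times\R^+)=\int_a^b\int_0^\infty \frac{dy\,dx}{(x+y)^2}=\int_a^b \frac{dx}{x}=\ln\frac{b}{a}<+\infty .
\]
This is exactly the estimate required in \textbf{(C)}.

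\textbf{Integrability of $\rho$.} We split the domain into $x<1$ and $x>1$.

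On $x<1$ we have $\rho=\frac12[\ln(1+y)-\ln(1-x)]$. The $y$-integral gives $\int_0^\infty \frac{\ln(1+y)}{(x+y)^2}dy$. Near $x\to 0$ this diverges only like $\ln(1/x)$, since the integrand is bounded by $\ln(1+y)/y^2$ away from $0$ and behaves like $y/(x+y)^2$ near $0$. Since $\ln(1/x)$ is integrable on $(0,1)$, this term is finite. The term $-\ln(1-x)\int_0^\infty (x+y)^{-2}dy=-\ln(1-x)/x$ is integrable near $1$ and bounded near $0$.

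On $x>1$ we have $\rho=\frac12[\ln(1+1/y)-\ln(1-1/x)]$. The term $-\ln(1-1/x)/x$ is integrable near $x=1$ (log singularity) and behaves like $x^{-2}$ at infinity. For the other term, $\int_1^\infty\int_0^\infty \frac{\ln(1+1/y)}{(x+y)^2}dy\,dx=\int_0^\infty \frac{\ln(1+1/y)}{1+y}dy$. This is finite, because the integrand is $\sim\ln(1/y)$ near $0$ and $\sim y^{-2}$ at infinity. Hence \textbf{(D)} holds.

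\textbf{Ergodicity (the main obstacle).} This is the only nonroutine point. By \cite{bonanno_delvigna_isola} the suspension flow is isomorphic to the geodesic flow on the modular surface, with $m\otimes Leb$ corresponding, up to normalisation, to the Liouville measure. The geodesic flow on a finite-area hyperbolic surface is ergodic with respect to Liouville measure, by Hopf's argument or by Moore's theorem. Lemma~\ref{lemma:gfergodic} then transfers ergodicity from $m\otimes Leb$ to $m$.

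We should check that Lemma~\ref{lemma:gfergodic} applies even though $m$ is infinite. The flow measure is finite by \textbf{(D)}, and the argument only uses that a $\PPP$-invariant set $A$ lifts to a flow-invariant set $\{[(p,s)]: p\in A\}$. If $\PPP$ is invertible off a null set, that lifted set has full or null measure, so $m(A)=0$ or $m(A^c)=0$.
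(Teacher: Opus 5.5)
Your proposal is correct and follows the paper's argument essentially step by step. Invariance is taken from \cite{bonanno_delvigna_isola}, the strip estimate is the same $\ln(b/a)$ computation, and ergodicity of $m$ is transferred from the ergodic geodesic flow through the isomorphism and Lemma~\ref{lemma:gfergodic}.

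The only difference is that you verify \textbf{(D)} (and optionally the invariance) by direct computation, where the paper simply cites \cite[Proposition B.2]{bonanno_delvigna_isola}. Your estimates are sound: on each piece both bracketed terms of $\rho$ are nonnegative, so splitting them is legitimate. Each of the four terms in fact integrates to $\pi^2/6$ against $m$, giving $\int\rho\,dm=\pi^2/3$. Your observation that the lifting argument needs only $\rho>0$, not finiteness of $m$, is a worthwhile clarification of the lemma.
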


\begin{proof}
We know that $m$ is absolutely continuous with respect to $Leb$, and that it is $\PPP$-invariant. From \cite{bonanno_delvigna_isola}, Appendix B, we have that the suspension flow with base $(\PPP,m)$ and roof function $\rho$ is isomorphic to the geodesic flow on the modular surface, which is known to be ergodic. Hence, the suspension flow is ergodic, and Lemma \ref{lemma:gfergodic} implies that also the base $(\PPP , m)$ is.\\
Now, consider a bounded interval $I=(a,b) \subset \R^+$ with $a >0$. Then, since $dm = \dfrac{1}{(x+y)^2} dLeb$:
\begin{align*}
m (I \times \R^+ ) = \int_a^b \int_0^{+\infty} \dfrac{1}{(x+y)^2} \, dy \, dx= \int_a^b \left[ - \dfrac{1}{x+y} \right]_0^{+\infty} \, dx = \int_a^b \dfrac{1}{x} \, dx = \ln \dfrac{b}{a} <+\infty.
\end{align*}
Therefore, condition \textbf{(C)} is satisfied.\\
Finally the integrability condition \textbf{(D)}, namely that 
$$
\int\limits_{\R^+ \times \R^+} \rho \, dm < +\infty
$$
was proven in \cite[Proposition B.2]{bonanno_delvigna_isola}.
\end{proof}



Since all the required conditions \textbf{(A1) - (A6)}, \textbf{(B)}, and \textbf{(C)} are satisfied, Theorem \ref{thm:main} implies exponential decay of correlations for the suspension flow model of the geodesic flow on the modular surface, with respect to $m_\rho$ and $\CCC_*$ observables. Thus, Theorem \ref{thm:geodesic} is just a consequence of the construction of the regularity class $\widetilde{\CCC}_*$ of observables in \eqref{def:classofobservables2}, through the isomorphism $\Gamma$ defined by the cross-section considered in \cite{bonanno_delvigna_isola}. That is, we finally provided a purely dynamical proof for the decay of correlations of the geodesic flow on the modular surface, with respect to its Liouville measure and the class of observables $\widetilde{\CCC}_*$.
\color{black}

\appendix

\section{proof  of Proposition \ref{prop:truccodibowen}} \label{sec:appendixa}

Our first goal is showing that the series
\begin{align*}
u(x,y) = \sum\limits_{i=0}^{+\infty} \left[ \rt \left( \Pt^i (x,y) \right) - \rt \left( \Pt^i (x,y') \right) \right]
\end{align*}

converges for any suitable $(x,y) \in \Dt \times \R^+$, where $y' \in \R^+$ is fixed. The strategy is straightforward: for $i\ge 2$, we find a bound for $\left| \rt \left( \Pt^i (x,y) \right) - \rt \left( \Pt^i (x,y') \right) \right|$ from above with a term depending on $i$ whose series is finite. We make use of the Mean Value Theorem, some uniform estimates for the derivatives of $\rt$ and the contracting behaviour of $\Gt$.\\
For the next discussion, we can fix $i \ge 2$ and $(x,y) \in \Dt \times \R^+$.\\
Given two sequences $\lbrace s_i \rbrace_{i \ge 1} \subset \N^* \setminus \lbrace 1 \rbrace$ and $\lbrace q_i \rbrace_{i \ge 1} \subset \N^*$, and $l \ge 1$, set:
\begin{itemize}
\item $\Isq{l-1} =  \Gh_{J_{s_{l-1}}^{q_{l-1}}} \circ \dots \circ \Gh_{J_{s_0}^{q_0}} ( \R^+ )$.
\item $I_{s_{-1}}^{q_{-1}} = \R^+$.
\end{itemize}
Moreover, for any interval $I \subset \R^+$, we denote its length by $|I|$.\\
Since we already fixed $x$ and $y$, in order to prevent confusion it is convenient to rename $(\xi , \eta )$ the generic pair of variables in $\Dt \times \R^+$. For consistency, we also call $\partial_\eta$ the partial derivative operator along the second component.\\
Consider $j \in \lbrace 0 , \dots , 2i-1 \rbrace$ and suppose that $\Fh^j (x) \in J_{s_j}^{q_j}$, with $s_j , q_j \ge 1$, $s_j \ge 2$.

\begin{lemma}
\label{lemma:stima1}
\begin{align*}
\left| \rt \left( \Pt^i (x,y) \right) - \rt \left( \Pt^i (x,y') \right) \right| \le \sup\limits_{\eta \in \Isq{2i-1}} \left| \partial_y \rt \left(\Ft^i (x) , \eta \right) \right| \cdot |\Isq{2i-1}|.
\end{align*}
\end{lemma}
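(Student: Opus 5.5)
The inequality is a mean value estimate along the stable fibre over $\Ft^i(x)$. The plan is to write both points in skew-product form, check that their second coordinates lie in the single interval $\Isq{2i-1}$, and then apply the Mean Value Theorem in the variable $\eta$.

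First I write the two points explicitly. Since $\Pt=\Ph^2$ and $\Ph$ is the skew product $(\Fh(\xi),\Gh_\xi(\eta))$ with $\Gh_\xi=\Gh_{J_s^q}$ on $J_s^q\times\R^+$ (Lemma~\ref{lem:G}), I get
\begin{align*}
\Pt^i(x,y)=\left(\Ft^i(x),\,\Gh_{J_{s_{2i-1}}^{q_{2i-1}}}\circ\dots\circ\Gh_{J_{s_0}^{q_0}}(y)\right),
\end{align*}
where $\Fh^j(x)\in J_{s_j}^{q_j}$ for $0\le j\le 2i-1$. The same formula holds with $y'$ in place of $y$. The first coordinates coincide, and the fibre maps are the same composition because they depend only on the symbols $s_j,q_j$ of $x$. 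Since $y,y'\in\R^+=I_{s_{-1}}^{q_{-1}}$, both second coordinates lie in $\Isq{2i-1}$ by definition.

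Next I note that $\Isq{2i-1}$ is an interval. Each $\Gh_{J_s^q}=g_0^{q-1}\circ g_1^{s-1}\circ g_0$ is continuous and increasing on $\R^+$, so the image of $\R^+$ under the composition is an interval. By Lemma~\ref{lem:G}(3) it is also bounded, with length at most $1$. The point $\xi=\Ft^i(x)$ lies in a single cell $J_{s_{2i}s_{2i+1}}^{q_{2i}q_{2i+1}}$, because $x$ is suitable. On this cell $\rt=\rt_{s_{2i}s_{2i+1}}^{q_{2i}q_{2i+1}}$, and
\begin{align*}
\rt_{s_{2i}s_{2i+1}}^{q_{2i}q_{2i+1}}(\xi,\eta)=\rho_0\ln\Ft'(\xi)-\rho_0\ln\Gt_{J}'(\eta),
\end{align*}
which is $C^1$ in $\eta$ on $\R^+$, since $g_0$ is $C^2$ with $g_0'>0$. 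The Mean Value Theorem applied to $\eta\mapsto\rt(\Ft^i(x),\eta)$ between the two second coordinates gives a point $\eta^*$ in $\Isq{2i-1}$ with
\begin{align*}
\left|\rt(\Pt^i(x,y))-\rt(\Pt^i(x,y'))\right|=\left|\partial_\eta\rt(\Ft^i(x),\eta^*)\right|\cdot\left|\Gt^i_x(y)-\Gt^i_x(y')\right|.
\end{align*}
Bounding the derivative factor by its supremum over $\Isq{2i-1}$, and the distance factor by $|\Isq{2i-1}|$, gives the claim. I also write $\partial_y$ for $\partial_\eta$, as in the statement.

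The main obstacle is only bookkeeping: making sure the fibre map applied to $y$ and to $y'$ is the same composition, and that $\rt$ is evaluated in one fixed branch. Both follow from the facts that $\Gh_\xi$ depends only on the cell of $\xi$ and that $x$ is suitable. The supremum may be infinite, but then the inequality is trivial, so no finiteness needs to be checked at this stage.
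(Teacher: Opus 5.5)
Your proposal is correct and follows the same route as the paper. You apply the Mean Value Theorem to $\eta \mapsto \rt(\Ft^i(x),\eta)$ between $\Gt_x^i(y)$ and $\Gt_x^i(y')$, note that both points and the intermediate point lie in the interval $\Isq{2i-1}$, and bound each factor, while your extra checks (same fibre composition for $y$ and $y'$, $\Isq{2i-1}$ being an interval, $C^1$ regularity of $\rt$ in $\eta$ on a single cell) only make explicit what the paper leaves implicit.
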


\begin{proof}

We apply the Mean Value Theorem to the function $\eta \mapsto \rt \left( \Ft^i(x) , \eta \right)$, finding $\overline{\eta}$ between $\Gt_x^i(y)$ and $\Gt_x^i(y')$ such that
\begin{align*}
\left| \rt \left( \Pt^i (x,y) \right) - \rt \left( \Pt^i (x,y') \right) \right| = \left| \rt \left( \Ft^i (x),\Gt_x^i(y) \right) - \rt \left( \Ft^i (x),\Gt_x^i(y') \right) \right| = \left| \partial_y \rt \left(\Ft^i (x) , \overline{\eta} \right) \right| \cdot \left| \Gt_x^i(y) - \Gt_x^i(y') \right|.
\end{align*}

By the definitions, $\Gt_x^i(y), \Gt_x^i(y')$, and $\overline{\eta}$ all belong to $\Isq{2i-1}$, so that
\begin{align*}
\left| \partial_y \rt \left(\Ft^i (x) , \overline{\eta} \right) \right| \cdot \left| \Gt_x^i(y) - \Gt_x^i(y') \right| \le \sup\limits_{\eta \in \Isq{2i-1}} \left| \partial_y \rt \left(\Ft^i (x) , \eta \right) \right| \cdot |\Isq{2i-1}|.
\end{align*}
\end{proof}

Thus, we now look for estimates for $\partial_\eta \rt$ and $|\Isq{2i-1}|$. Let us start from the first one: it is convenient to focus on $\rh$, first.\\
Consider $(\xi , \eta ) \in \Dt \times \R^+$. The derivative along $\eta$ of $\rh (\xi,\eta ) = \rho_0 \ln \left[\dfrac{\Fh'(\xi)}{\Gh_\xi'(\eta)}\right]$ reads:
\begin{align*}
\partial_\eta \rh (\xi,\eta) = \rho_0 \cdot \left[\dfrac{\Fh'(\xi)}{\Gh_\xi'(\eta)}\right]^{-1} \cdot \left[ -\dfrac{\Fh'(\xi) \cdot \Gh_\xi''(\eta)}{\Gh_\xi'(\eta)^2}\right] = \rho_0\cdot \dfrac{-\Gh_\xi''(\eta)}{\Gh_\xi'(\eta)}.
\end{align*}

Therefore, if $\xi \in J_s^q$, then
\begin{align*}
\partial_\eta \rh (\xi,\eta) = \widehat{C}\cdot \dfrac{-\Gh_{\xi}''(\eta)}{\Gh_{\xi}'(\eta)} = \widehat{C}\cdot \dfrac{-\Gh_{J_s^q}''(\eta)}{\Gh_{J_s^q}'(\eta)},
\end{align*}
meaning that $\partial_\eta \rh|_{J_s^q \times \R^+} = \partial_\eta \rh_s^q|_{J_s^q \times \R^+}$ only depends on $\eta$.\\
Thus, with a slight abuse of notations, for simplicity we omit $\xi$ and write $\partial_\eta \rh_s^q (\eta)$.

\begin{proposition}
\label{prop:stimaderivata}
\begin{align*}
C_T = \sup\limits_{ (\xi,\eta)\in \Delta \times \R^+} \partial_\eta \rh (\xi,\eta) = \sup\limits_{\substack{s \ge 2 \\ q \ge 1}} \left[ \sup_{\eta>0} \partial_\eta \rh_s^q (\eta) \right] < +\infty.
\end{align*}
\end{proposition}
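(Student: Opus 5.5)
The plan is to compute $\partial_\eta \rh_s^q$ explicitly as a logarithmic derivative of the composition $\Gh_{J_s^q} = g_0^{q-1}\circ g_1^{s-1}\circ g_0$. This expresses it as a sum of terms controlled by Adler's property \textbf{(A6)}, weighted by derivatives of partial compositions. Those weights are then summed using \textbf{(B)}(iii), as in the proof of item (3) of Proposition~\ref{prop:MarkovGibbs}. The argument gives a bound on $|\partial_\eta \rh_s^q|$, hence on the supremum.

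\emph{Step 1 (chain rule for the log-derivative).} For a composition $H = h_k\circ\cdots\circ h_1$ with $z_0=\eta$ and $z_j = h_j(z_{j-1})$, one has
\begin{align*}
-\frac{H''(\eta)}{H'(\eta)} = \sum_{j=1}^{k} \left(-\frac{h_j''}{h_j'}\right)(z_{j-1}) \cdot \prod_{i=1}^{j-1} h_i'(z_{i-1}).
\end{align*}
The factors coming from $g_1$ have $g_1''\equiv 0$, so they contribute nothing to the sum, and their derivative $g_1'\equiv 1$ does not alter the products.

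\emph{Step 2 (each $g_0$-term is bounded by $C_A$).} Since $g_0=f_0^{-1}$, we have $g_0'(w)=1/f_0'(g_0(w))$ and $g_0''(w) = -f_0''(g_0(w))/f_0'(g_0(w))^3$. Therefore
\begin{align*}
-\frac{g_0''(w)}{g_0'(w)} = \frac{f_0''(g_0(w))}{f_0'(g_0(w))^2}\in (0, C_A]
\end{align*}
by \textbf{(A4)} and \textbf{(A6)}. In particular $\partial_\eta\rh_s^q\ge 0$, so bounding the supremum is the same as bounding the absolute value.

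\emph{Step 3 (summing the weights).} This is the main point. The first factor $g_0$ acts at $z_0=\eta$ with empty product, so it contributes at most $C_A$. After $g_0$ and $g_1^{s-1}$, the point $w:=g_1^{s-1}(g_0(\eta))$ satisfies $w\ge s-1\ge 1$. The remaining $q-1$ applications of $g_0$ act at the points $g_0^{l}(w)$, $0\le l\le q-2$. The weight attached to the $(l+1)$-st of them is $g_0'(\eta)\prod_{i=0}^{l-1} g_0'(g_0^{i}(w))$. Since $g_0$ is increasing and $w\ge 1$, we have $g_0^i(w)\ge g_0^i(1)$. Since $g_0'$ is decreasing, this gives $g_0'(g_0^i(w))\le g_0'(g_0^i(1))$, and also $g_0'(\eta)\le 1$. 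Hence each weight is at most $\prod_{i=0}^{l-1} g_0'(g_0^i(1)) \le \prod_{i=1}^{l-1} g_0'(g_0^i(1)) \le C_I^{(2)}\omega_l^{(2)}$ for $l\ge1$, using \textbf{(B)}(iii) (for $l=0$ the weight is at most $1$). Putting these together gives
\begin{align*}
0\le \partial_\eta \rh_s^q(\eta) \le \rho_0\, C_A\left(2 + C_I^{(2)}\sum_{l\ge 1}\omega_l^{(2)}\right),
\end{align*}
and the series converges by \textbf{(B)}(i) because $\omega_l^{(2)}\le(\omega_l^{(2)})^{1-\sigma_2}$. The bound is uniform in $s$, $q$ and $\eta$, which proves $C_T<+\infty$.

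The only delicate point is the bookkeeping in Step 3. The monotonicity comparison $g_0^i(w)\ge g_0^i(1)$ must be applied in the right direction, so that the bound does not depend on $\eta$, $s$ or $q$; this is exactly the same mechanism used to prove Adler's property for $\Fh$.
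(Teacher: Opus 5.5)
Your proposal is correct and follows essentially the same route as the paper. You expand $-\Gh_{J_s^q}''/\Gh_{J_s^q}'$ by the chain rule into terms $-g_0''/g_0' = \bigl(f_0''/(f_0')^2\bigr)\circ g_0 \le C_A$ weighted by partial products of $g_0'$, then bound those weights uniformly using $g_1^{s-1}\circ g_0(\eta) > 1$, the monotonicity of $g_0$ and $g_0'$, and \textbf{(B)}(iii), with bookkeeping (the explicit $l=0$ term and the constant $C_I^{(2)}$) slightly more careful than the paper's.
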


\begin{proof}

The first equivalence is immediate. Let us show that $C_T$ is finite.\\
Fix $s,q \ge 1$, $s\ge 2$, and consider $(\xi,\eta ) \in J_s^q \times \R^+$. Straightforward calculations yield that:\\
\begin{itemize}
\item $\Gh_\xi'(\eta) = \Gh_{J_s^q}'(\eta) = \prod\limits_{i=0}^{q-1}g_0'\left( G_\xi^i(\eta)\right) = \left[ \prod\limits_{i=0}^{q-2}g_0' \left( g_0^i \circ g_1^{s-1} \circ g_0 (\eta) \right) \right]\cdot g_0'(\eta)$.\\
\item $\Gh_\xi''(\eta) = \sum\limits_{l=0}^{q-1} \left\{ \left[ \prod\limits_{i=0}^{l-1} g_0' \left( G_\xi^i (\eta) \right)^2 \right] \cdot g_0''\left( G_\xi^l (\eta) \right) \cdot \left[\prod\limits_{i=l+1}^{q-1} g_0' \left( G_\xi^i(\eta)\right)\right]\right\}$.
\end{itemize}

Let us employ these expressions to estimate the formula $\partial_\eta \rh_s^q(\eta) = \widehat{C}\cdot \dfrac{-\Gh_{J_s^q}''(\eta)}{\Gh_{J_s^q}'(\eta)}$.
\begin{align*}
\dfrac{-\Gh_{\xi}''(\eta)}{\Gh_{\xi}'(\eta)} & = \sum\limits_{l=0}^{q-1} \dfrac{\left[ \prod\limits_{i=0}^{l-1} g_0' \left( G_\xi^i (\eta) \right)^2 \right] \cdot \left[ - g_0''\left( G_\xi^l (\eta) \right)\right] \cdot \left[\prod\limits_{i=l+1}^{q-1} g_0' \left( G_\xi^i(\eta)\right)\right]}{\prod\limits_{i=0}^{q-1}g_0'\left( G_\xi^i(\eta)\right)}= \\
& = \sum\limits_{l=0}^{q-1} \left[ \prod\limits_{i=0}^{l-1} g_0' \left( G_{\xi}^i (\eta )\right) \right] \cdot \left[ - \dfrac{g_0'' \left( G_{\xi}^l (\eta ) \right) }{g_0'\left( G_{\xi}^l (\eta )\right)}\right].
\end{align*}
Observe that $g_0''(\eta) = -\dfrac{f_0''(g_0(\eta))g_0'(\eta)}{\left[f_0'(g_0(\eta))\right]^2}$, so that
\begin{align*}
\left[ - \dfrac{g_0'' \left( G_{\xi}^l (\eta ) \right) }{g_0'\left( G_{\xi}^l (\eta )\right)}\right] = \dfrac{f_0''\left( g_0 \circ G_\xi^l(\eta)\right) \cdot g_0'\left(G_\xi^l(\eta)\right)}{\left[f_0'\left(g_0\circ G_\xi^l (\eta)\right)\right]^2\cdot g_0'\left(G_\xi^l(\eta)\right)} = \dfrac{f_0''\left( g_0 \circ G_\xi^l(\eta)\right)}{\left[f_0'\left(g_0\circ G_\xi^l (\eta)\right)\right]^2} \le C_A,
\end{align*}

where we used the Adler's property of $f_0$. Therefore,
\begin{align*}
\dfrac{-\Gh_{\xi}''(\eta)}{\Gh_{\xi}'(\eta)}  \le C_A \cdot \sum\limits_{l=0}^{q-1} \left[ \prod\limits_{i=0}^{l-1} g_0' \left( G_{\xi}^i (\eta )\right) \right] = C_A \cdot \sum\limits_{l=0}^{q-1} \left[ g_0'(\eta) \cdot \prod\limits_{i=1}^{l-1} g_0' \left( G_{\xi}^i (\eta )\right) \right]<  C_A \cdot \sum\limits_{l=0}^{q-1} \left[\prod\limits_{i=1}^{l-1} g_0' \left( G_{\xi}^i (\eta )\right) \right],
\end{align*}
where in the last inequality we simply used that $g_0'(\eta) < 1$.\\
Since $\xi \in J_s^q$, if $l \le q-1$ and $j \in \lbrace 1 , \dots , l-1 \rbrace$, then $G_\xi^i (\eta) = g_0^{j-1}\circ g_1^{s-1} \circ g_0 (\eta)$.\\
Now, $g_1^{s-1}\circ g_0 (\eta) > s-1 \ge 1$, so that $g_0^{j-1}\circ g_1^{s-1} \circ g_0 (\eta) > g_0^{j-1}(1) > g_0^j(1)$, because $g_0$ is increasing. But $g_0'$ is decreasing, implying that $g_0'\left(G_\xi^i(\eta)\right) = g_0' (g_0^{i-1}\circ g_1^{s-1} \circ g_0 (\eta)) < g_0'(g_0^i(1))$. Therefore:
\begin{align*}
\dfrac{-\Gh_{\xi}''(\eta)}{\Gh_{\xi}'(\eta)} < C_A \cdot \sum\limits_{l=0}^{q-1} \left[\prod\limits_{i=1}^{l-1} g_0' \left( g_0^i(1)\right) \right] = C_A \cdot \left\{ 1 + \sum\limits_{l=1}^{q-1}\left[\prod\limits_{i=1}^{l-1} g_0' \left( g_0^i(1)\right) \right] \right\} .
\end{align*}
Recall from assumption \textbf{(B)} that:
\begin{itemize}
\item $\prod\limits_{i=1}^{l-1} g_0'(g_0^i (1)) < C_I^{(2)} \cdot \omega_l^{(2)}$, with $\omega_l^{(2)} \in (0,1)$.
\item $\sum\limits_{l=1}^{+\infty} \left[ \omega_l^{(2)} \right]^{1-\sigma_2} < +\infty$, 
where $\sigma_2 \in (0,1)$.\\
\end{itemize}
In particular, $\sum\limits_{l=1}^{+\infty} \omega_l^{(2)} < +\infty$, so that, in the end:
\begin{align*}
\partial_\eta \rh_s^q (\eta) = \rho_0 \cdot \dfrac{-\Gh_{\xi}''(\eta)}{\Gh_{\xi}'(\eta)} < \rho_0 \cdot C_A \cdot \left[ 1 + \sum\limits_{l=1}^{q-1} \omega_l^{(2)} \right] < \rho_0 \cdot C_A \cdot \left[ 1 + \sum\limits_{l=1}^{+\infty} \omega_l^{(2)} \right] < +\infty.
\end{align*}
Since the last inequality does not depend on $s,q$ or $\eta$, the proof  is concluded.
\end{proof}

From the proof  of Proposition \ref{prop:stimaderivata}, we learn that $\partial_\eta \rh>0$, i.e. $\rh$ is increasing along the second component.

\begin{corollary}
\label{cor:stimaderivata}
\begin{align*}
\sup\limits_{(\xi,\eta)\in\Dt\times\R^+} \partial_\eta \rt (\xi,\eta) < 2C_T.
\end{align*}
\end{corollary}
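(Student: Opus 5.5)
The plan is to differentiate the two-step decomposition of $\rt$ and show that the second summand is bounded by a fixed fraction $\lambda C_T$ of $C_T$, with $\lambda<1$ uniform in $(\xi,\eta)$. This gives $\sup \partial_\eta\rt \le (1+\lambda)C_T < 2C_T$, which is the \emph{strict} inequality for the supremum, not merely a pointwise one.

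Since $\rt=\rh+\rh\circ\Ph$ and $\Ph(\xi,\eta)=(\Fh(\xi),\Gh_\xi(\eta))$, the chain rule gives
\begin{align*}
\partial_\eta \rt(\xi,\eta)=\partial_\eta\rh(\xi,\eta)+\partial_\eta\rh\bigl(\Fh(\xi),\Gh_\xi(\eta)\bigr)\cdot \Gh_\xi'(\eta).
\end{align*}
By Proposition~\ref{prop:stimaderivata}, the first term is at most $C_T$, and both terms are positive. It therefore suffices to find $\lambda<1$, independent of $(\xi,\eta)\in\Dt\times\R^+$, such that
\begin{align*}
\partial_\eta\rh\bigl(\Fh(\xi),\Gh_\xi(\eta)\bigr)\,\Gh_\xi'(\eta)\le\lambda C_T .
\end{align*}
I split according to $q_0$, where $\xi\in J_{s_0}^{q_0}$.

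\emph{Case $q_0\ge2$.} As in the proof of Proposition~\ref{prop:unifcontraction}, $\Gh_\xi'(\eta)$ contains the factor $g_0'(G_\xi(\eta))$ with $G_\xi(\eta)=g_1^{s_0-1}\circ g_0(\eta)>1$. Hence $\Gh_\xi'(\eta)\le g_0'(1)$, and the second term is at most $g_0'(1)\,C_T$.

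\emph{Case $q_0=1$.} Here $\Gh_\xi'(\eta)=g_0'(\eta)$ may approach $1$ as $\eta\to0$, so the gain must come from the other factor. The key observation is that the evaluation point $\eta'':=\Gh_\xi(\eta)=g_0(\eta)+s_0-1$ is larger than $1$. I use the expansion from the proof of Proposition~\ref{prop:stimaderivata}, with $h:=f_0''/(f_0')^2$:
\begin{align*}
\partial_\eta\rh_{s}^{q}(\eta'')=\rho_0\sum_{l=0}^{q-1}\Bigl[\prod_{i=0}^{l-1}g_0'\bigl(G^i(\eta'')\bigr)\Bigr]\,h\bigl(g_0\circ G^l(\eta'')\bigr).
\end{align*}
I separate the $l=0$ term, which equals $\rho_0\,h(g_0(\eta''))$. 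Since $g_0(\eta'')>g_0(1)$ and $h$ is decreasing by \textbf{(A5)}, this term is at most $\rho_0\,h(g_0(1))$. Every term with $l\ge1$ carries the factor $g_0'(\eta'')<g_0'(1)$. The remaining bracket is bounded, exactly as in Proposition~\ref{prop:stimaderivata} via \textbf{(B)}, by $B:=\rho_0 C_A\sum_{l\ge1}\omega_l^{(2)}$. This yields
\begin{align*}
\partial_\eta\rh\bigl(\Fh(\xi),\eta''\bigr)\le \rho_0\,h(g_0(1))+g_0'(1)\,B,
\end{align*}
and I then multiply by $\Gh_\xi'(\eta)<1$.

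It remains to compare the $q_0=1$ bound with $C_T$, and this is the step I expect to be the main obstacle. From below, letting $\eta\to0^+$ with $q=1$ gives $C_T\ge\rho_0\lim_{x\to0^+}h(x)$, and by monotonicity \textbf{(A5)} this limit equals $\sup h=C_A$; strict monotonicity then gives $h(g_0(1))<C_A$. The gap between $\rho_0h(g_0(1))+g_0'(1)B$ and $C_T$, however, is not automatic. My first attempt would be to run the same splitting on the term $\partial_\eta\rh(\xi,\eta)$ realizing $C_T$ near its supremum, so as to obtain $C_T$ as a supremum of the same expression with $\eta$ near $0$. This would show that evaluating at $\eta''>1$ loses a definite amount, namely the difference $h(0^+)-h(g_0(1))>0$ together with the factor $g_0'(1)<1$ on the tail, relative to evaluating near $0$. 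Doing so produces $\lambda:=\max\bigl\{g_0'(1),\ \sup_{\eta''>1}\partial_\eta\rh(\cdot,\eta'')/C_T\bigr\}<1$.

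If this direct comparison fails, the fallback is to bound the whole composite $-\Gt''/\Gt'$ directly, as a single sum over the $q_0+q_1$ factors of $g_0$. Every term after the first $q_0$ then carries a factor $g_0'$ evaluated at a point larger than $1$, and I would sum the resulting geometric-type series using \textbf{(B)} to obtain a strict bound below $2C_T$.
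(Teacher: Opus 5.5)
Your opening reduction (chain rule, first term at most $C_T$, second term at most $C_T\,\Gh_\xi'(\eta)<C_T$) is exactly the paper's proof. It gives $\partial_\eta\rt(\xi,\eta)<2C_T$ pointwise, and hence only $\sup\partial_\eta\rt\le 2C_T$. That is all the paper's argument actually establishes, and all Appendix~A needs. Your $q_0\ge2$ case correctly improves this to $(1+g_0'(1))C_T$.

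The gap is in the case $q_0=1$. The bound $\rho_0h(g_0(1))+g_0'(1)B$ cannot be compared with $C_T$. The constant $B$ comes from the non-sharp constants $C_I^{(2)},\omega_l^{(2)}$ of \textbf{(B)}, while the only lower bound on $C_T$ you have is $\rho_0C_A$, so nothing forces this quantity below $C_T$. Your proposed remedy (``run the same splitting near the supremum'') names the missing argument rather than supplying it. To make it work you would need two facts:
(i) each tail term $\prod_{i<l}g_0'(G^i(\eta))\,h\bigl(g_0(G^l(\eta))\bigr)$, $l\ge1$, is nonincreasing in $\eta$. This holds because $G^i$ is increasing and $g_0'$, $h$ are decreasing. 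It gives that the tail at $\eta''>1$ is at most $g_0'(1)$ times its limit as $\eta\to0^+$ for the same $(s,q)$.
(ii) $C_T\ge\rho_0(C_A+K^*)$, where $K^*>0$ is the supremum over $(s,q)$ of those limits.
Also, \textbf{(A5)} does not give strict monotonicity of $h$, so you cannot rely on $h(g_0(1))<C_A$.

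There is a much shorter repair: for $q_0=1$ the slack lies in the \emph{first} term, not the second. Since $\Gh_\xi=g_1^{s_0-1}\circ g_0$ there, $\partial_\eta\rh(\xi,\eta)=\rho_0h(g_0(\eta))\le\rho_0C_A$. On the other hand, letting $\eta\to0^+$ in $\partial_\eta\rh_2^2$ and using \textbf{(A3)} and \textbf{(A5)} gives $C_T\ge\rho_0\bigl(C_A+h(g_0(1))\bigr)$, with $h(g_0(1))>0$ by \textbf{(A4)}. Bounding the second term crudely by $C_T$ then yields $\partial_\eta\rt\le 2C_T-\rho_0h(g_0(1))$. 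Combined with your $q_0\ge2$ case, $\sup\partial_\eta\rt\le\max\{(1+g_0'(1))C_T,\,2C_T-\rho_0h(g_0(1))\}<2C_T$.
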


\begin{proof}

Take $(\xi , \eta ) \in \Dt \times \R^+$ and recall that $\rt = \rh + \rh \circ \Ph$. Then, Proposition \ref{prop:stimaderivata} implies that:
\begin{align*}
\partial_\eta \rt (\xi , \eta ) & = \partial_\eta\rh (\xi , \eta) + \partial_\eta \left[ \rh \left(\Fh(x),\Gh_\xi (\eta)\right) \right] = \partial_\eta \rh(\xi,\eta) + \partial_\eta\rh\left(\Fh(x),\Gh_\xi(\eta)\right)\cdot\Gh_\xi'(\eta)< 2C_T,
\end{align*}
since $\Gh_\xi'(\eta)<1$.\\
The estimate is independent of $(\xi,\eta)$, thus proving the statement.
\end{proof}

Now we look at the behaviour of $\left| \Isq{2i-1} \right|$.

\begin{lemma}
\label{lemma:lunghezzaintervalli}
\begin{align*}
\left| \Isq{2i-1}\right| \le  g_0' (1)^{i-1}.
\end{align*}
\end{lemma}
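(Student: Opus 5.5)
The plan is to view $\Isq{2i-1}$ as the image of $\R^+$ under a composition of $2i$ maps $\Gh_{J_{s_j}^{q_j}}$, $j=0,\dots,2i-1$. The estimate then comes from grouping these maps into consecutive pairs: each pair is a uniform contraction, and one extra factor bounds the length of a single image.

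First, I rewrite the interval as
\begin{align*}
\Isq{2i-1} = \bigl(\Gt_{J_{s_{2i-2}s_{2i-1}}^{q_{2i-2}q_{2i-1}}} \circ \dots \circ \Gt_{J_{s_2s_3}^{q_2q_3}}\bigr)\left( \Gt_{J_{s_0s_1}^{q_0q_1}}(\R^+) \right),
\end{align*}
which follows directly from $\Gt_{J_{s_0s_1}^{q_0q_1}} = \Gh_{J_{s_1}^{q_1}}\circ \Gh_{J_{s_0}^{q_0}}$. This displays $i-1$ outer pair-maps acting on the interval $\Gt_{J_{s_0s_1}^{q_0q_1}}(\R^+)$.

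Second, I bound the length of the innermost image. By Lemma~\ref{lem:G}(2)–(3), $\Gh_{J_{s_0}^{q_0}}(\R^+)$ is an interval of length at most $1$. Since $\Gh_{J_{s_1}^{q_1}}$ has derivative less than $1$, applying it does not increase length, so $|\Gt_{J_{s_0s_1}^{q_0q_1}}(\R^+)|\le 1$.

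Third, I treat each of the $i-1$ outer pair-maps. Proposition~\ref{prop:unifcontraction} applies pointwise on $\R^+$, and its proof uses only the branch indices $(s,q)$, not the point $x$. So for every admissible quadruple $(s_{2k},s_{2k+1},q_{2k},q_{2k+1})$ we have $\Gt'_{J}(\eta)\le g_0'(1)$ for all $\eta>0$. By the mean value theorem each such map multiplies the length of an interval by at most $g_0'(1)$. Iterating over the $i-1$ outer pairs gives $|\Isq{2i-1}|\le g_0'(1)^{i-1}\cdot 1$, which is the claim.

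The only delicate point is confirming that Proposition~\ref{prop:unifcontraction} really gives the bound for an arbitrary pair of consecutive branches, including $q_{2k}=q_{2k+1}=1$. Revisiting that proof settles this. In the case $q_0=q_1=1$, the factor $g_0'(G_x(y))$ with $G_x(y)>1$ supplies the contraction. When some $q\neq 1$, a factor $g_0'$ evaluated at a point greater than $1$ supplies it. In both cases the bound is independent of $\eta$, so it holds uniformly on the whole interval being contracted.
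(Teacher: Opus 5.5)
Your proof is correct and is essentially the paper's argument. The paper also writes $\Isq{2i-1}$ as $i-1$ successive applications of $\Gt$ (along $\Ft(x),\dots,\Ft^{i-1}(x)$) to $I_{s_0s_1}^{q_0q_1}=\Gt_x(\R^+)$, uses $|I_{s_0s_1}^{q_0q_1}|\le 1$, and then iterates the mean value theorem with the bound $\Gt'\le g_0'(1)$ from Proposition~\ref{prop:unifcontraction}; your extra checks (the length-one bound via Lemma~\ref{lem:G} and the uniformity of the contraction over all branch indices) only make explicit what the paper uses implicitly.
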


\begin{proof}

By definition, $\Isq{2i-1} = \Gt_{\Ft(x)}^{i-1} \left( \Gt_{x} (\R^+) \right) = \Gt_{\Ft(x)}^{i-1} \left( I_{s_0s_1}^{q_0q_1} \right)$.\\
Observe that the Mean Value Theorem provides an $\overline{\eta} \in I_{s_0s_1}^{q_0q_1}$ such that
\begin{align*}
\left| \Gt_{\Ft(x)} (I_{s_0s_1}^{q_0q_1}) \right| = \Gt_{\Ft(x)}'(\overline{\eta}) \cdot \left|I_{s_0s_1}^{q_0q_1} \right| \le g_0'(1) \cdot \left| I_{s_0s_1}^{q_0q_1} \right| \le g_0'(1),
\end{align*}
where we used that $0< \Gt_{J_{s_0s_1}^{q_0q_1}}' (\overline{\eta}) \le g_0'(1)$ and $|I_{s_0s_1}^{q_0q_1}| \le 1$.\\
A simple induction argument proves that $\left| \Gt_{\Ft(x)}^{i-1} (I_{s_0s_1}^{q_0q_1}) \right|   \le g_0'(1)^{i-1}$, which gives the wanted inequality.
\end{proof}

Now we have all the necessary elements for proving Proposition \ref{prop:truccodibowen}.

\begin{proof}[Proof  of Proposition \ref{prop:truccodibowen}]

Consider $x,y,y'$ as before. We show that $$
u(x,u) = \sum\limits_{i=0}^{+\infty} \left[ \rt \left( \Pt^i (x,y) \right) - \rt \left( \Pt^i (x,y') \right) \right]
$$ is absolutely convergent. Recall that $\partial_\eta \rt >0$, and note that:
\begin{itemize}
\item For $i=0$, we have
\begin{align*}
|\rt (x,y) - \rt(x,y') | \le \sup\limits_{\eta>0} \partial_\eta \rt (\eta) \cdot |y-y'| \le 2C_T |y-y'|.
\end{align*}
\item For $i=1$, similarly, we have
\begin{align*}
|\rt \left( \Pt (x,y) \right) - \rt \left( \Pt (x,y')\right) | \le 2C_T \cdot | I_{s_0s_1}^{q_0q_1}| \le 2C_T=\dfrac{2C_T}{g_0'(1)}\cdot g_0'(1).
\end{align*}
\item For $i \ge 2$, we found
\begin{align*}
\left|\rt \left( \Pt^i(x,y)\right) - \rt\left( \Pt^i (x,y')\right) \right| \le 2C_T \cdot |\Isq{2i-1}| \le 2C_T\cdot g_0'(1)^{i-1} = \dfrac{2C_T}{g_0'(1)} \cdot g_0'(1)^i.
\end{align*}
\end{itemize}
Therefore, since $g_0'(1) < 1$,
\begin{align*}
\sum\limits_{i=0}^{+\infty} \left| \rt \left( \Pt^i (x,y) \right) - \rt \left( \Pt^i (x,y') \right) \right| \le 2C_T \cdot\left\{ |y-y'| + \dfrac{2C_T}{g_0'(1)}\cdot\sum\limits_{i=1}^{+\infty} g_0'(1)^j \right\} < +\infty,
\end{align*}
implying that $u(x,y)$ is well-defined on $\Dt \times \R^+$.\\
\\
As for the second part of the Proposition, observe that:
\begin{align*}
 u(x,y) & - u\left(\Pt (x,y)\right)  \\
& = \sum\limits_{i=0}^{+\infty} \left[ \rt \left( \Pt^i (x,y) \right) - \rt \left( \Pt^i (x , y' ) \right) \right] - \sum\limits_{i=0}^{+\infty} \left[ \rt \left( \Pt^{i+1} (x,y) \right) - \rt \left( \Pt^{i+1} (x , y' ) \right) \right] \\
& = \rt (x,y ) - \rt (x,y' ) + \sum\limits_{i=1}^{+\infty} \left[ \rt \left( \Pt^i (x,y) \right) - \rt \left( \Pt^i (x,y') \right)- \rt \left( \Pt^i (x,y) \right) + \rt \left( \Pt^i (x,y') \right) \right]\\
& = \rt (x,y) - \rt (x,y').
\end{align*}

That is: $\rt(x,y) = \rt (x,y' ) + u(x,y) - u \left(\Pt (x,y) \right)$, i.e. $u$ is the cohomology function that makes $(x,y) \mapsto \rt(x,y)$ cohomologous to $(x,y) \mapsto \rt (x , y' )$.
\end{proof}

\section{proof  of Lemma \ref{lemma:ordini}} \label{sec:appendixb}

Observe that bounded distortion for $\Ft$ implies that, for $x > x'$ in some $J_{s_0s_1}^{q_0q_1}$, we have:
\begin{align}
\Ft'(x) \le e^{\widetilde{C}_A \cdot |\Ft (x ) - \Ft (x')|}\cdot \Ft'(x') \le e^{\widetilde{C}_A \cdot \left|\Dt\right|}\cdot \Ft'(x'). \label{boundeddistortionFtilde}
\end{align}
Thus, if we call $\widetilde{C}_D = e^{\widetilde{C}_A \cdot \left|\Dt\right|}$, whenever we invoke ``bounded distortion'' for $\Ft$ with respect to a couple of points $x>x'$ in the same partition element $J_{s_0s_1}^{q_0q_1}$, we simply apply $\Ft'(x) \le \widetilde{C}_D \cdot \Ft'(x')$.\\
It is easy to check that the same holds for $\Fh$ (with the same constants).\\
Let us proceed with the proof  of Lemma \ref{lemma:ordini}.

\begin{proof}[Proof  of Lemma \ref{lemma:ordini}, point $\boldsymbol{(1)}$]

Since $\Ft$ has a $C^2$ extension to $\overline{J_{s_0s_1}^{q_0q_1}}$, by the Integral Mean Value Theorem we find $\xi \in J_{s_0s_1}^{q_0q_1}$ such that
\begin{align*}
|J_{s_0s_1}^{q_0q_1}|^{-1} \cdot \int_{c_{s_0s_1}^{q_0q_1}}^{d_{s_0s_1}^{q_0q_1}} \Ft'(x) dx = \Ft'(\xi ).
\end{align*}
On the other hand, by construction,
\begin{align*}
\int_{c_{s_0s_1}^{q_0q_1}}^{d_{s_0s_1}^{q_0q_1}} \Ft'(x) \, dx = \Ft (d_{s_0s_1}^{q_0q_1} ) - \Ft (c_{s_0s_1}^{q_0q_1}) = 1 - g_0(1).
\end{align*}
Hence, we find: $\Ft'(\xi) = [1-g_0(1)] \cdot |J_{s_0s_1}^{q_0q_1}|^{-1}$.
Now, since $\xi < d_{s_0s_1}^{q_0q_1}$, by monotonicity of $\Ft'$ and bounded distortion:
\begin{itemize}
\item $[1-g_0(1)] \cdot |J_{s_0s_1}^{q_0q_1}|^{-1} = \Ft'(\xi) \le \Ft'(d_{s_0s_1}^{q_0q_1} )$.
\item $\Ft'( d_{s_0s_1}^{q_0q_1}) \le \widetilde{C}_D \cdot \Ft'(\xi) = \widetilde{C}_D \cdot [1-g_0(1)] \cdot |J_{s_0s_1}^{q_0q_1}|^{-1}$.
\end{itemize}
Thus, the statement holds if we set, for instance, $\widetilde{C} = \max \lbrace \widetilde{C}_D \cdot [1-g_0(1)] , [1-g_0(1)]^{-1} \rbrace$.
\end{proof}

Remember that, by notation, if we have a product sign with decreasing index, we consider it equal to $1$.

\begin{proof}[Proof  of Lemma \ref{lemma:ordini}, point $\boldsymbol{(2)}$]

The proof  heavily relies on the following representation formulas:
\begin{itemize}
\item $\Fh'(x) = \left[ \prod\limits_{i=0}^{q-2} f_0' (f_0^i \circ f_i^{s-1} \circ f_0 (x)) \right] \cdot f_0'(x)$, for any $x \in J_s^q$.
\item $\Fh|_{J_s^q} = f_0^{q-1}\circ f_1^{s-1}\circ f_0|_{J_s^q}$. 
\item $d_s^q = \Fh|_{J_s^q}^{-1} (1) = g_0 \circ g_1^{s-1} \circ g_0^{q-1}(1)$.
\item $d_{s_0s_1}^{q_0q_1} = \Ft|_{J_{s_0s_1}^{q_0q_1}}^{-1} (1) =  g_0\circ g_1^{s_0-1}\circ g_0^{q_0-1} \circ g_0 \circ g_0^{s_1-1} \circ g_0^{q_1-1}(1)$.
\end{itemize}

Now:
\begin{align*}
\Ft'(d_{s_0s_1}^{q_0q_1} ) = \Fh' ( \Fh(d_{s_0s_1}^{q_0q_1}))\cdot \Fh'(d_{s_0s_1}^{q_0q_1}) = \Fh'(d_{s_1}^{q_1}) \cdot \Fh'(d_{s_0s_1}^{q_0q_1}).
\end{align*}
Observe that, if $q_1 \ge 2$ and $i \in \lbrace 0 , \dots , q_1 -2 \rbrace$, then:
\begin{align*}
f_0^i \circ f_i^{s_1-1} \circ f_0 (d_{s_1}^{q_1}) & = f_0^i \circ f_i^{s_1-1} \circ f_0 \circ g_0 \circ g_0^{s_1-1} \circ g_0^{q_1-1}(1) = g_0^{q_1-i-1}(1).
\end{align*}
Therefore, $\Fh'(d_{s_1}^{q_1}) = \left[ \prod\limits_{i=0}^{q_1-2} f_0' (g_0^{q_1-i-1}(1)) \right] \cdot f_0'(d_{s_1}^{q_1})$, and similar computations for $\Fh'(d_{s_0s_1}^{q_0q_1})$ lead to:
\begin{align*}
\Ft'(d_{s_0s_1}^{q_0q_1} ) &= \left[ \prod\limits_{i=0}^{q_1-2} f_0' (g_0^{q_1-i-1}(1)) \right] \cdot f_0'(d_{s_1}^{q_1}) \cdot \left[ \prod\limits_{i=0}^{q_0-2} f_0' (g_0^{q_0-i-1}(d_{s_1}^{q_1})) \right] \cdot f_0'(d_{s_0s_1}^{q_0q_1})=\\
& = \left[ \prod\limits_{i=1}^{q_1-1} f_0' (g_0^{i}(1)) \right] \cdot f_0'(d_{s_1}^{q_1}) \cdot \left[ \prod\limits_{i=1}^{q_0-1} f_0' (g_0^{i}(d_{s_1}^{q_1})) \right] \cdot f_0'(d_{s_0s_1}^{q_0q_1}).
\end{align*}

We also have a representation formula for $\Gh'_{J_s^q}$, which implies one for $\Gt_{J_{s_0s_1}^{q_0q_1}}'(y')$:
\begin{align*}
\Gt_{J_{s_0s_1}^{q_0q_1}}'(y') = \left[ \prod\limits_{i=0}^{q_1-2}g_0' \left( g_0^i \circ g_1^{s_1-1} \circ g_0 \left(\Gh_{J_{s_0}^{q_0}}(y')\right) \right) \right]\cdot g_0'\left(\Gh_{J_{s_0}^{q_0}}(y')\right) \cdot \left[ \prod\limits_{i=0}^{q_0-2}g_0' \left( g_0^i \circ g_1^{s_0-1} \circ g_0 (y') \right) \right]\cdot g_0'(y').
\end{align*}

Observe that $\dfrac{1}{g_0'(y)} = f_0'(g_0 (y))$, so that $\dfrac{1}{\Gt_{J_{s_0s_1}^{q_0q_1}}'(y')}$ reads:
\begin{align*}
\left[ \prod\limits_{i=1}^{q_1-1}f_0' \left( g_0^{i} \circ g_1^{s_1-1} \circ g_0 \left(\Gh_{J_{s_0}^{q_0}}(y')\right) \right) \right]\cdot f_0'\left(g_0 \left(\Gh_{J_{s_0}^{q_0}}(y')\right)\right) \cdot \left[ \prod\limits_{i=0}^{q_0-1}f_0' \left( g_0^{i} \circ g_1^{s_0-1} \circ g_0 (y') \right) \right]\cdot f_0'(g_0(y')),
\end{align*}
where we also rearranged the indices.\\
Therefore, we need to compare the following expressions for, respectively, $\Ft'(d_{s_0s_1}^{q_0q_1})$ and $\dfrac{1}{\Gt_{J_{s_0s_1}^{q_0q_1}}'(y')}$:
\begin{itemize}
\item $\left[ \prod\limits_{i=1}^{q_1-1} f_0' (g_0^{i}(1)) \right] \cdot f_0'(d_{s_1}^{q_1}) \cdot \left[ \prod\limits_{i=1}^{q_0-1} f_0' (g_0^{i}(d_{s_1}^{q_1})) \right] \cdot f_0'(d_{s_0s_1}^{q_0q_1})$.
\item $\left[ \prod\limits_{i=1}^{q_1-1}f_0' \left( g_0^{i} \circ g_1^{s_1-1} \circ g_0 \left(\Gh_{J_{s_0}^{q_0}}(y')\right) \right) \right]\cdot f_0'\left(g_0 \left(\Gh_{J_{s_0}^{q_0}}(y')\right)\right) \cdot \left[ \prod\limits_{i=1}^{q_0-1}f_0' \left( g_0^{i} \circ g_1^{s_0-1} \circ g_0 (y') \right) \right]\cdot f_0'(g_0(y'))$.
\end{itemize}

Let us start proving the bound $\dfrac{1}{\Gt'_{J_{s_0s_1}^{q_0q_1}}(y')} \le \widetilde{C}' \cdot \Ft'(d_{s_0s_1}^{q_0q_1})$, for some $\widetilde{C}'\in \R^+$. We consider different cases.\\
\\
\textbf{\underline{Case $\boldsymbol{q_0=q_1=1}$}}\\
\\
We have $\Ft'(d_{s_0s_1}^{11}) = f_0'(d_{s_1}^{1}) f_0'(d_{s_0s_1}^{11})$ and $\dfrac{1}{\Gt'_{J_{s_0s_1}^{11}}(y')} = f_0'\left( g_0\left( \Gh_{J_{s_0}^{1}}(y')\right)\right) f_0'(g_0(y'))$.\\
If $g_0(y') \le d_{s_1}^1$, then, since $f_0',g_0$, and $g_1$ are increasing:
\begin{itemize}
\item $f_0' (g_0(y')) \le  f_0'( d_{s_1}^1)$.
\item $ f_0'\left(g_0 \left(\Gh_{J_{s_0}^{1}}(y')\right)\right) = f_0'(g_0\circ g_1^{s_0-1}\circ g_0 (y')) \le f_0'(g_0\circ g_1^{s_0-1} (d_{s_1}^{1})) = f_0'(d_{s_0s_1}^{11})$.
\end{itemize}
This implies that $\dfrac{1}{\Gt'_{J_{s_0s_1}^{11}}(y')} \le \Ft'(d_{s_0s_1}^{11})$.\\
On the other hand, if $g_0(y') > d_{s_1}^1$, then $f_0'\left(g_0 \left(\Gh_{J_{s_0}^{1}}(y')\right)\right) > f_0'(d_{s_0s_1}^{11})$ and bounded distortion for $f_0$ (see equation \eqref{boundeddistortionf0}) yields:
\begin{itemize}
\item $f_0'(g_0(y'))\le e^{C_A \cdot [f_0 (g_0(y'))-f_0(d_{s_1}^1)]} \cdot f_0'(d_{s_1}^1) \le e^{C_A \cdot (y'-2)}\cdot f_0'(d_{s_1}^1)$,\\
where we used that $f_0 (d_{s_1}^1) = f_0 ( g_0 \circ g_1^{s_1-1}(1)) = g_1^{s_1-1}(1) = s_1 \ge 2$ (which also implies that $y' > 2$).
\item $f_0'\left(g_0 \left(\Gh_{J_{s_0}^{1}}(y')\right)\right) \le e^{C_A \cdot \left[ f_0\left(g_0 \left(\Gh_{J_{s_0}^{1}}(y')\right)\right) - f_0(d_{s_0s_1}^{q_0q_1})\right]} \cdot f_0'(d_{s_0s_1}^{11}) \le e^{C_A (y'-1)} \cdot \Ft'(d_{s_0s_1}^{11})$,\\[4pt]
the last inequality following from:
\begin{align*}
f_0\left(g_0 \left(\Gh_{J_{s_0}^{1}}(y')\right)\right) - f_0(d_{s_0s_1}^{11}) =& \Gh_{J_{s_0}^1}(y') - f_0 ( g_0\circ g_1^{s_0-1} (d_{s_1}^1) = g_1^{s_0-1} \circ g_0 (y') - g_1^{s_0-1} (d_{s_1}^1))=\\
\ &= g_0(y')+s_0-1 - d_{s_1}^1 - (s_0-1) = g_0 (y') - d_{s_1}^1 \le 1.
\end{align*}
\end{itemize}
Thus, $\dfrac{1}{\Gt'_{J_{s_0s_1}^{11}}(y')} \le   e^{C_A (y'-1)} \cdot \Ft'(d_{s_0s_1}^{11})$. In the end, for any value of $y'$: 
\begin{align*}
\dfrac{1}{\Gt'_{J_{s_0s_1}^{11}}(y')} \le \max \lbrace 1 , e^{C_A (y'-1)} \rbrace \cdot \Ft'(d_{s_0s_1}^{11}).
\end{align*}

\textbf{\underline{Case $\boldsymbol{q_0=1, q_1\ge 2}$}}\\
\\
We have to estimate
\begin{align*}
\dfrac{1}{\Gt_{J_{s_0s_1}^{1q_1}}'(y')} =\left[ \prod\limits_{i=1}^{q_1-1}f_0' \left( g_0^{i} \circ g_1^{s_1-1} \circ g_0 \left(\Gh_{J_{s_0}^{1}}(y')\right) \right) \right]\cdot f_0'\left(g_0 \left(\Gh_{J_{s_0}^{1}}(y')\right)\right) \cdot f_0'(g_0(y'))
\end{align*}
from above with
\begin{align*}
\Ft'(d_{s_0s_1}^{1q_1} ) = \left[ \prod\limits_{i=1}^{q_1-1} f_0' (g_0^{i}(1)) \right] \cdot f_0'(d_{s_1}^{q_1}) \cdot f_0'(d_{s_0s_1}^{1q_1}),
\end{align*}
apart from a multiplicative constant. We carefully compare pairs of factors in the two formulas, progressing from left to right with respect to the terms in $\dfrac{1}{\Gt_{J_{s_0s_1}^{1q_1}}'(y')}$ and following the increasing index $i$.

\begin{itemize}
\item We couple $f_0'\left(g_0\circ g_1^{s_1-1}\circ g_0 \left( \Gh_{J_{s_0}^1}(y')\right)\right)$, the term with $i=1$, with $f_0' ( d_{s_1}^{q_1})$, the second-to-last in $\Ft'(d_{s_0s_1}^{1q_1} )$. In particular, we observe that, since $q_1 \ge 2$:
\begin{align*}
\Gh_{J_{s_0}^1}(y') = g_1^{s_0-1}\circ g_0(y') = s_0-1 + g_0(y') > s_0-1 \ge 1 \ge g_0^{q_1-2} (1),
\end{align*}
implying that
\begin{align*}
g_0\circ g_1^{s_1-1}\circ g_0 \left( \Gh_{J_{s_0}^1}(y')\right) > g_0\circ g_1^{s_1-1}\circ g_0 ( g_0^{q_1-2} (1)) =  g_0\circ g_1^{s_1-1}\circ  g_0^{q_1-1} (1) = d_{s_1}^{q_1}.
\end{align*}
Moreover, similarly as in the previous case, we note that
\begin{align*}
f_0\left(g_0\circ g_1^{s_1-1}\circ g_0 \left( \Gh_{J_{s_0}^1}(y')\right)\right) - f_0 ( d_{s_1}^{q_1}) = (s_1-1) + g_0 \left( \Gh_{J_{s_0}^1}(y')\right) - (s_1-1) - g_0^{q_1-1}(1) < 1.
\end{align*}
Thus, by bounded distortion: $f_0'\left(g_0\circ g_1^{s_1-1}\circ g_0 \left( \Gh_{J_{s_0}^1}(y')\right)\right) \le e^{C_A }\cdot f_0' ( d_{s_1}^{q_1})$.
\item For $i \in \lbrace 2, \dots , q_1-1 \rbrace$, compare $f_0' \left( g_0^{i} \circ g_1^{s_1-1} \circ g_0 \left(\Gh_{J_{s_0}^{1}}(y')\right) \right)$ with the term $f_0'(g_0^{i-1}(1))$ in the product appearing in $\Ft' (d_{s_0s_1}^{1q_1})$. Indeed, simply observe that $g_0 \circ g_1^{s_1-1} \circ g_0 \left(\Gh_{J_{s_0}^{1}}(y')\right) < 1$, so that, by increasing monotonicity of $f_0'$ and $g_0$:
\begin{align*}
f_0' \left( g_0^{i} \circ g_1^{s_1-1} \circ g_0 \left(\Gh_{J_{s_0}^{1}}(y')\right) \right) & = f_0' \left( g_0^{i-1}\left( g_0 \circ \circ g_1^{s_1-1} \circ g_0 \left(\Gh_{J_{s_0}^{1}}(y')\right) \right) \right) <  f_0' (g_0^{i-1} (1)).
\end{align*}
Thus, the majority of the terms in the products can be compared by a direct inequality, and the final constant will not depend on $q_1$.
\item Compare $f_0' \left( g_0 \left( \Gh_{J_{s_0}^1}(y') \right) \right)$ with $f_0' ( d_{s_0s_1}^{1q_1})$. If $g_0 \left( \Gh_{J_{s_0}^1}(y') \right) < d_{s_0s_1}^{1q_1}$, the inequality is clear. Otherwise, we use bounded distortion: similarly to the previous case, $f_0 \left( g_0 \left( \Gh_{J_{s_0}^1}(y') \right) \right) - f_0(d_{s_0s_1}^{1q_1})) \le 1$, so that $f_0' \left( g_0 \left( \Gh_{J_{s_0}^1}(y') \right) \right) \le e^{C_A} \cdot f_0' ( d_{s_0s_1}^{1q_1})$.
\item It remains to compare $f_0'(g_0(y'))$ and $f_0'(g_0^{q_1-1}(1))$. If $g_0(y') \le g_0^{q_1-1}(1)$, we are done.\\
If $g_0(y') > g_0^{q_1-1}(1)$, bounded distortion implies:
\begin{align*}
f_0'(g_0(y')) &\le e^{C_A \cdot [f_0 (g_0(y')) - f_0 (g_0^{q_1-1}(1))]} \cdot f_0'(g_0^{q_1-1}(1)) = \\
&= e^{C_A \cdot [y' - g_0^{q_1-2}(1)]} \cdot f_0'(g_0^{q_1-1}(1)) < e^{C_A \cdot y'} \cdot f_0'(g_0^{q_1-1}(1)).
\end{align*}
\end{itemize}
In the end, we found that
\begin{align*}
\dfrac{1}{\Gt_{J_{s_0s_1}^{1q_1}}'(y')} \le \left[ e^{C_A}\cdot e^{C_A} \cdot e^{C_A \cdot y'} \right]\cdot \Ft'(d_{s_0s_1}^{1q_1} )= e^{C_A ( 2+ y' )} \cdot \Ft'(d_{s_0s_1}^{1q_1} ).
\end{align*}

\textbf{\underline{Case $\boldsymbol{q_0\ge 2, q_1 = 1}$}}\\
\\
Here we have:
\begin{align*}
 \dfrac{1}{\Gt_{J_{s_0s_1}^{q_01}}'(y')} &= f_0'\left(g_0 \left(\Gh_{J_{s_0}^{q_0}}(y')\right)\right) \cdot \left[ \prod\limits_{i=1}^{q_0-1}f_0' \left( g_0^{i} \circ g_1^{s_0-1} \circ g_0 (y') \right) \right]\cdot f_0'(g_0(y')).\\
 \Ft'(d_{s_0s_1}^{q_01}) &= f_0'(d_{s_1}^{1}) \cdot \left[ \prod\limits_{i=1}^{q_0-1} f_0' (g_0^{i}(d_{s_1}^{1})) \right] \cdot f_0'(d_{s_0s_1}^{q_01}).
\end{align*}
First, observe that, since $q_0 \ge 2$, then
\begin{align*}
\Gh_{J_{s_0}^{q_0}}(y') = g_0^{q_0-1}\circ g_1^{s_0-1}\circ g_0 (y') = g_0(g_0^{q_0-2}\circ g_1^{s_0-1}\circ g_0 (y') )< 1 < s_1 = g_1^{s_1-1}(1).
\end{align*}
Thus $
f_0'\left( g_0 \left( \Gh_{J_{s_0}^{q_0}}(y') \right) \right) < f_0'\left( g_0 \circ g_1^{s_1-1}(1) \right) = f_0'(d_{s_1}^1)$, and the first factor is estimated.\\
We are left to compare the other factors, which can be wrote as $\dfrac{1}{\Gh'_{J_{s_0}^{q_0}}(y')}$ and $\Fh' (d_{s_0s_1}^{q_01})$. It is actually more convenient to match $\dfrac{1}{\Gh'_{J_{s_0}^{q_0}}(y')}$ with $\Fh' (d_{s_0}^{q_0})$, so we reduce to this case using bounded distortion on $\Fh$, i.e. $\Fh'(d_{s_0}^{q_0}) \le \widetilde{C}_D \cdot \Fh'(d_{s_0s_1}^{q_01})$. Therefore, we compare $\left[ \prod\limits_{i=1}^{q_0-1}f_0' \left( g_0^{i} \circ g_1^{s_0-1} \circ g_0 (y') \right) \right]\cdot f_0'(g_0(y'))$ and
\begin{align*}
\Fh'(d_{s_0}^{q_0}) = \left[ \prod\limits_{i=0}^{q_0-2} f_0' (f_0^i \circ f_1^{s_0-1} \circ f_0 (d_{s_0}^{q_0}))\right] \cdot f_0'(d_{s_0}^{q_0}) = \left[ \prod\limits_{i=1}^{q_0-1} f_0' (g_0^{i}(1)) \right] \cdot f_0'(d_{s_0}^{q_0}),
\end{align*}
where we used that $d_{s_0}^{q_0} = g_0\circ g_1^{s_0-1}\circ g_0^{q_0-1} (1)$. Let us consider $f_0' (g_0 \circ g_1^{s_0-1} \circ g_0 (y'))$, the term with index $i=1$: we match it with $f_0'(d_{s_0}^{q_0})$. The strategy is the same as in the previous cases: whether the argument $g_0 \circ g_1^{s_0-1} \circ g_0 (y')$ is smaller or greater than $d_{s_0}^{q_0}$, bounded distortion implies that
\begin{align*}
f_0' (g_0 \circ g_1^{s_0-1} \circ g_0 (y')) \le e^{C_A} \cdot f_0'(d_{s_0}^{q_0}),
\end{align*}
because
\begin{align*}
|f_0 (g_0 \circ g_1^{s_0-1} \circ g_0 (y')) - f_0(d_{s_0}^{q_0})| = |(s_0-1) + g_0(y') - (s_0-1) - g_0^{q_0-1} (1 )| \le 1.
\end{align*}
Now, note that $g_0 \circ g_1^{s_0-1}\circ g_0 (y') < 1$, so that, for any $i \in \lbrace 2 , \dots , q_0-1 \rbrace$,
\begin{align*}
f_0'(g_0^i \circ g_1^{s_0-1}\circ g_0 (y')) = f_0'(g_0^{i-1}(g_0 \circ g_1^{s_0-1}\circ g_0 (y'))) < f_0'(g_0^{i-1} ( 1 )).
\end{align*}
Lastly, compare $f_0'(g_0(y'))$ and $f_0'(g_0^{q_0-1}(1))$ through bounded distortion: if $g_0(y') \le g_0^{q_0-1}(1)$, the wanted inequality is immediate. Otherwise, we find:
\begin{align*}
f_0'(g_0(y')) &\le e^{C_A \cdot [ f_0 (g_0 (y')) - f_0 (g_0^{q_0-1}(1))]}\cdot f_0'(g_0^{q_0-1}(1))  = e^{C_A \cdot [ y' - g_0^{q_0-2}(1)]}\cdot f_0'(g_0^{q_0-1}(1))<\\
&<e^{C_A \cdot y'} \cdot f_0'(g_0^{q_0-1}(1)).
\end{align*}
We obtained that: $\dfrac{1}{\Gh'_{J_{s_0}^{q_0}}(y')} \le e^{C_A}\cdot e^{C_A\cdot y'} \cdot \Fh'(d_{s_0}^{q_0}) \le e^{C_A}\cdot e^{C_A\cdot y'} \cdot \widetilde{C}_D \cdot \Fh'(d_{s_0s_1}^{q_01})$.\\
Thus, summarizing the entire case, we found that
\begin{align*}
\dfrac{1}{\Gt'_{J_{s_0s_1}^{q_01}}(y')}  \le \left[ e^{C_A}\cdot e^{C_A\cdot y'} \cdot \widetilde{C}_D \right] \cdot \Ft'(d_{s_0s_1}^{q_01}) = \left[ \widetilde{C}_D \cdot e^{C_A (y'+1)}\right] \cdot \Ft'(d_{s_0s_1}^{q_01}).
\end{align*}

\textbf{\underline{Case $\boldsymbol{q_0\ge 2, q_2 \ge 2}$}}\\
\\
Let us compare
\begin{align*}
\frac{1}{\Gt'_{J_{s_0s_1}^{q_0q_1}}(y')} = \frac{1}{\Gh'_{J_{s_1}^{q_1}}\left( \Gh_{J_{s_0}^{q_0}}(y')\right)} \cdot \frac{1}{\Gh'_{J_{s_0}^{q_0}}(y')}
\end{align*}
with
\begin{align*}
\Ft'(d_{s_0s_1}^{q_0q_1}) = \Fh'(d_{s_1}^{q_1}) \cdot \Fh'(d_{s_0s_1}^{q_0q_1}).
\end{align*}
The exact same argument adopted in the second part of the previous case implies that
\begin{align*}
\frac{1}{\Gh'_{J_{s_0}^{q_0}}(y')} \le \left[ \widetilde{C}_D \cdot e^{C_A (y'+1)}\right] \cdot \Fh'(d_{s_0s_1}^{q_0q_1}),
\end{align*}
so that we only need to focus on the first terms. In particular:
\begin{align*}
\frac{1}{\Gh'_{J_{s_1}^{q_1}}\left( \Gh_{J_{s_0}^{q_0}}(y')\right)} & = \left[ \prod\limits_{i=1}^{q_1-1} f_0' \left( g_0^i \circ g_1^{s_1-1}\circ g_0 \left( \Gh_{J_{s_0}^{q_0}}(y')\right)\right) \right] \cdot f_0' \left( g_0 \left( \Gh_{J_{s_0}^{q_0}}(y') \right) \right).\\
\Fh' (d_{s_1}^{q_1} )& = \left[ \prod\limits_{i=1}^{q_1-1} f_0' ( g_0^i (1))\right]\cdot f_0' (d_{s_1}^{q_1}).
\end{align*}
Now, consider $f_0' \left( g_0 \circ g_1^{s_1-1}\circ g_0 \left( \Gh_{J_{s_0}^{q_0}}(y')\right)\right)$: we estimate it from above with  $f_0'(d_{s_1}^{q_1})$. Indeed, using bounded distortion if necessary, i.e. if $g_0 \circ g_1^{s_1-1}\circ g_0 \left( \Gh_{J_{s_0}^{q_0}}(y')\right) > d_{s_1}^{q_1}$, we obtain that
\begin{align*}
f_0' \left( g_0 \circ g_1^{s_1-1}\circ g_0 \left( \Gh_{J_{s_0}^{q_0}}(y')\right)\right) \le e^{C_A}\cdot f_0'(d_{s_1}^{q_1}).
\end{align*}
In the same way that we already saw in the case with $q_0 =1$ and $q_1 \ge 2$, we get, for $i \in \lbrace 2 , \dots , q_1-1 \rbrace$,
\begin{align*}
f_0' \left( g_0^i \circ g_1^{s_1-1}\circ g_0 \left( \Gh_{J_{s_0}^{q_0}}(y')\right)\right) < f_0'(g_0^{i-1}(1)).
\end{align*}
Lastly, note that, being $q_0 \ge 2$, the properties of $\Gh$ imply that $\Gh_{J_{s_0}^{q_0}} (y') <1$. In this way we have that: $\left|f_0\left(g_0 \left(\Gh_{J_{s_0}^{q_0}} (y')\right)\right)- f_0(g_0^{q_1-1}(1))\right|=\left|\Gh_{J_{s_0}^{q_0}} (y') - g_0^{q_1-2}(1)\right| < 1$.
This allows us to employ bounded distortion (if necessary) and find that $f_0'\left(g_0 \left(\Gh_{J_{s_0}^{q_0}} (y')\right)\right) \le e^{C_A}\cdot f_0'(g_0^{q_1-1}(1))$. This means that, in this last case,
\begin{align*}
\frac{1}{\Gt'_{J_{s_0s_1}^{q_0q_1}}(y')} \le \left[ \widetilde{C}_D \cdot e^{C_A (y'+1)} \cdot e^{C_A} \cdot e^{C_A} \right] \cdot \Ft'(d_{s_0s_1}^{q_0q_1})= \left[ \widetilde{C}_D \cdot e^{C_A (y'+3)} \right] \cdot \Ft'(d_{s_0s_1}^{q_0q_1}).
\end{align*}

We can collect all the estimates that we found and finally prove the first part of the statement. In particular, if we call $\widetilde{C}' = \widetilde{C}_D \cdot e^{C_A (y'+3)}$, then we have that $\dfrac{1}{\Gt'_{J_{s_0s_1}^{q_0q_1}}(y')} \le \widetilde{C}' \cdot \Ft'(d_{s_0s_1}^{q_0q_1})$, for any $s_0,s_1,q_0,q_1$.\\
\\
Let us turn to the second estimate, i.e.
\begin{align*}
\dfrac{1}{\widetilde{C}'} \cdot \Ft'(d_{s_0s_1}^{q_0q_1}) \le \dfrac{1}{\Gt'_{J_{s_0s_1}^{q_0q_1}}(y')}.
\end{align*}
The argument is basically the same that we just outlined, but with different matches for the factors appearing in $\Ft'(d_{s_0s_1}^{q_0q_1})$, expressed as
\begin{align*}
\left[ \prod\limits_{i=1}^{q_1-1} f_0' (g_0^{i}(1)) \right] \cdot f_0'(d_{s_1}^{q_1}) \cdot \left[ \prod\limits_{i=1}^{q_0-1} f_0' (g_0^{i}(d_{s_1}^{q_1})) \right] \cdot f_0'(d_{s_0s_1}^{q_0q_1})
\end{align*}
and $\dfrac{1}{\Gt'_{J_{s_0s_1}^{q_0q_1}}(y')}$, i.e.
\begin{align*}
\left[ \prod\limits_{i=1}^{q_1-1}f_0' \left( g_0^{i} \circ g_1^{s_1-1} \circ g_0 \left(\Gh_{J_{s_0}^{q_0}}(y')\right) \right) \right]\cdot f_0'\left(g_0 \left(\Gh_{J_{s_0}^{q_0}}(y')\right)\right) \cdot \left[ \prod\limits_{i=1}^{q_0-1}f_0' \left( g_0^{i} \circ g_1^{s_0-1} \circ g_0 (y') \right) \right]\cdot f_0'(g_0(y')).
\end{align*}
For such a reason, we only sketch the idea.\\
Suppose that $q_1 \ge 2$. Then, for $i \in \lbrace 2 , \dots , q_1-1 \rbrace$, we compare $f_0'(g_0^i(1))$ from $\Ft'(d_{s_0s_1}^{q_0q_1})$ with the term $f_0' \left( g_0^{i} \circ g_1^{s_1-1} \circ g_0 \left(\Gh_{J_{s_0}^{q_0}}(y')\right) \right)$ from $\dfrac{1}{\Gt'_{J_{s_0s_1}^{q_0q_1}}(y')}$.\\
Just note that, since $g_0(1) < 1 \le s_1-1 < g_1^{s_1-1}\circ g_0 \left(\Gh_{J_{s_0}^{q_0}}(y')\right)$, then
\begin{align*}
f_0'(g_0^i(1)) = f_0'(g_0^{i-1}(g_0(1))) < f_0' \left(g_0^{i-1} \left( g_1^{s_1-1}\circ g_0 \left(\Gh_{J_{s_0}^{q_0}}(y')\right) \right)\right),
\end{align*}
which leads to the wanted bounds.\\
If $q_0 =1$, we simply apply bounded distortion to the remaining terms. If $q_0 \ge 2$, we adopt the same scheme as before and estimate the terms appearing in the related product sign.\\
The case $q_0=q_1=1$ is immediately handled with bounded distortion, if necessary.\\
In the end, up to renaming the constant $\widetilde{C}'$, we finally obtain that
\begin{align*}
\dfrac{1}{\widetilde{C}'} \cdot \Ft'(d_{s_0s_1}^{q_0q_1}) \le \dfrac{1}{\Gt'_{J_{s_0s_1}^{q_0q_1}}(y')},
\end{align*}
thus completing the proof .
\end{proof}

\subsection*{Acknowledgments} 
NB and CB are partially supported by the PRIN Grant 2022NTKXCX ``Stochastic properties of dynamical systems'' funded by the Ministry of University and Research, Italy. NB and CB acknowledge the MUR Excellence Department Project awarded to the Department of Mathematics, University of Pisa, CUP I57G22000700001. This research is part of CB's activity within the UMI Group ``DinAmicI'' \texttt{www.dinamici.org} and the Gruppo Nazionale di Fisica Matematica, INdAM, Italy.
PV was partially supported by CIDMA under the
Portuguese Foundation for Science and Technology 
(FCT, https://ror.org/00snfqn58)  
Multi-Annual Financing Program for R\&D Units,
grants UID/4106/2025 and UID/PRR/4106\-/2025.
https://doi.org/\-10.54499/UID/04106/2025.

\end{document}